\DeclareRobustCommand\longtwoheadrightarrow
\newcommand{\Wedge}{\bigwedge}
\newcommand{\got}{\mathfrak}
\newcommand{\cali}{\mathcal}
\newcommand{\eqdef}{\coloneqq}
\newcommand{\mto}{\mapsto}
\newcommand{\mbb}{\mathbb}
\newcommand{\fracddtz}{\frac{d}{dt}\bigg{|}_{t=0}}
\newcommand{\fracddtzs}{\frac{d}{dt}\big{|}_{t=0}}
\newcommand{\bigleftpar}{\big{(}}
\newcommand{\bigrightpar}{\big{)}}
\newcommand{\biggleftpar}{\bigg{(}}
\newcommand{\biggrightpar}{\bigg{)}}
\newcommand{\mbf}{\mathbf}
\newcommand{\llbra}{\llbracket}
\newcommand{\rrbra}{\rrbracket}
\newcommand{\dr}{\mathbf{d}}
\newcommand{\biggleftbra}{\bigg{[}}
\newcommand{\biggrightbra}{\bigg{]}}
\newcommand{\simeqd}{\mathrel{\rotatebox[origin=c]{-90}{$\simeq$}}}
\newcommand{\Tau}{\mathrm{T}}
\DeclareMathOperator{\Ad}{Ad}
\DeclareMathOperator{\ad}{ad}
\DeclareMathOperator{\GL}{GL}
\DeclareMathOperator{\Gr}{Gr}
\DeclareMathOperator{\Hom}{Hom}
\DeclareMathOperator{\Aut}{Aut}
\DeclareMathOperator{\id}{id}
\DeclareMathOperator{\grap}{graph}
\DeclareMathOperator{\Id}{Id}
\DeclareMathOperator{\Coker}{Coker}
\DeclareMathOperator{\Ker}{Ker}
\DeclareMathOperator{\tr}{tr}
\DeclareMathOperator{\Kur}{Kur}
\DeclareMathOperator{\rad}{rad}
\DeclareMathOperator{\defor}{def}
\DeclareMathOperator{\ver}{vert}
\DeclareMathOperator{\pr}{pr}
\DeclareMathOperator{\MC}{MC}
\DeclareMathOperator{\Graph}{graph}
\DeclareMathOperator{\Bott}{Bott}
\DeclareMathOperator{\res}{res}
\DeclareMathOperator{\img}{Im}
\DeclareMathOperator{\Taut}{Taut}
\DeclareMathOperator{\Cent}{Cent}
\theoremstyle{plain}
\newtheorem{thm}{Theorem}[section]
\theoremstyle{definition}
\newtheorem{defin}[thm]{Definition}
\newtheorem{ex}[thm]{Example}
\newtheorem{lem}[thm]{Lemma}
\newtheorem{prop}[thm]{Proposition}
\newtheorem{cor}[thm]{Corollary}
\newtheorem{rem}[thm]{Remark}
\title{Deformations 
 of ideals in Lie algebras}
\author{I.~Ermeidis$^*$, M.~Jotz}
\keywords{Ideals, Lie algebras, $L_\infty$-algebras, deformation theory, deformation cohomology, stability and rigidity of ideals in Lie algebras}
\subjclass{Primary: 
	17B56, 
Secondary: 
14D15
}
\email{ilias.ermeidis@mathematik.uni-goettingen.de}
 \email{madeleine.jotz@uni-wuerzburg.de}
\thanks{This project was partially funded by the RTG 2491 in G\"ottingen and by a GSSP-DAAD fellowship at the University of G\"ottingen.}
\thanks{$^*$Corresponding author}
\begin{document}

\maketitle

\begin{abstract}
	
		This paper develops the deformation theory of Lie ideals. It shows that the 
		 smooth deformations  of an ideal $\got i$ in a Lie algebra $\got g$ differentiate to cohomology classes in the cohomology of $\got g$ with values in its adjoint representation on $\Hom(\got i, \got g/\got i)$. The cohomology associated with the ideal $\got i$ in $\got g$ is compared with other Lie algebra cohomologies defined by $\got i$, such as the cohomology defined by $\got i$ as a Lie subalgebra of $\got g$ in \cite{Richardson-Deformations-of-subalgebras-69'}, and the cohomology defined by the Lie algebra morphism $\got g \to \got g/\got i$.

		  After a choice of complement of the ideal $\got i$ in the Lie algebra $\got g$, its deformation complex is enriched to the differential graded Lie algebra that controls its deformations, in the sense that its Maurer-Cartan elements are in one-to-one correspondence with the (small) deformations of the ideal. Furthermore, the $L_{\infty}$-algebra that simultaneously controls the deformations of $\got{i}$ and of the ambient Lie bracket is identified.

		Under appropriate assumptions on the low degrees
of the deformation cohomology of a given Lie ideal, the (topological) rigidity and  stability of ideals are studied, as well as obstructions to deformations of ideals of Lie algebras.
		 	\end{abstract}

\bigskip

	\tableofcontents
	
	\section{Introduction}
Ideals in Lie algebras are crucial in the representation theory and classification of the latter.
Therefore, for understanding a given Lie algebra, it is crucial to have a good
comprehension of its Lie ideals — such as how many there are, i.e.~whether there are none, few,
or many, and how much these essentially differ from one another. 
This is  where
deformation theory comes into play. 
Deformation theory serves as a powerful tool for understanding how mathematical structures change under formal or smooth perturbations. Originating in algebraic geometry, where it was used to study the deformations of complex structures, the theory has since found wide-ranging applications in algebra, topology, and mathematical physics. In the context of Lie algebras, deformation theory investigates how the structure of a Lie algebra can be modified or preserved under small changes to its Lie bracket.

The foundations of deformation theory 
were
laid by Kodaira and Spencer in their remarkable series of works on deformations of
complex manifolds \cite{Kodaira-Spencer-On-deformations-of-complex-analytic-structures-Part1-and-Part2-58',Kodaira-Spencer-On-the-existence-of-deformations-of-complex-analytic-structures-58',Kodaira-Spencer-deformations-of-complex-structures-Part3-60',Kodaira-book-on-complex-manifolds-and-deformation-of-complex-structures-86'}. 
Gerstenhaber then studied deformations of rings
and associative algebras \cite{Gerstenhaber-the-cohomology-structure-of-an-associative-ring-63',Gerstenhaber-On-deformations-of-rings-and-algebras-part1-64',Gerstenhaber-on-deformations-of-rings-and-algebras-part2-66',Gerstenhaber-on-deformations-of-rings-and-algebras-part3-68',Gerstenhaber-on-deformations-of-rings-and-algebras-part4-74',Gerstenhaber-On-deformations-of-rings-and-algebras-part5-1999}. Foundational work in this area was conducted as well in the 1960's by Nijenhuis and Richardson, who developed key techniques for studying infinitesimal deformations of algebraic structures. In particular, they introduced the concept of deformation cohomology and provided criteria for the integrability of infinitesimal deformations into formal deformations. Their work established that the deformation theory of Lie algebras can be elegantly formulated using the language of differential graded Lie algebras (dgLa), which naturally encode both the deformation problem and its obstructions, see \cite{Nijenhuis-Richardson-Cohomology-and-Deformations-of-algebraic-structures-64',Nijenhuis-Richardson-cohomology-and-deformations-in-graded-Lie-algebras-66',Nijenhuis-Composition-Systems-and-Deformations-of-subalgebras-68',Nijenhuis-Richardson-Commutative-algebra-cohomology-and-deformations-68',Nijenhuis-Richardson-Deformations-of-Lie-algebra-structures-67',Nijenhuis-Richardson-Deformations-of-homomorphisms-of-Lie-groups-and-Lie-algebras-67',Richardson-A-Rigidity-theorem-for-subalgebras-of-Lie-and-associative-algebras-67',Richardson-Deformations-of-subalgebras-69',Richardson-Stanley-Stable-subalgebras-of-Lie-and-associative-algebras-67',Richardson-on-the-rigidity-of-semidirect-product-of-Lie-algebras-67'}. Despite these advancements, surprisingly enough,  the deformation theory of Lie ideals seems to be, so far, almost totally
absent in the literature. 
 While the deformation theory of Lie subalgebras has been partially addressed in certain contexts, the specific case of ideals — a central and invariant feature of Lie algebras —remains largely unexplored. This gap in the literature is striking, given that the behavior of ideals under deformations can influence both the internal structure of the algebra and its external representations.

This paper addresses this gap by associating to any ideal in a Lie algebra a dgLa that controls its deformations. Furthermore, it introduces an $L_\infty$-algebra that governs the simultaneous deformations of the Lie bracket in the ambient Lie algebra and the ideal itself. These constructions provide a new perspective on the deformation theory of Lie algebras, with significant implications for understanding rigidity, stability, and obstruction phenomena.

\medskip

 A well-known principle in deformation theory states, roughly speaking, that behind
every reasonable deformation problem of a mathematical structure of a specific type,
there is a differential graded Lie algebra (dgLa for short) or, more generally, an $L_\infty$-algebra,
which ``controls" the deformation problem --  in the sense that its Maurer-Cartan elements are in bijective correspondence with the deformations of the initial structure. This
philosophy can be traced back to Deligne, Drinfeld, Kontsevich and many others \cite{Deligne-letter-to-Millson-and-Goldman-1987,Drinfeld-A-letter-from-Kharkov-to-Moscow-about-deformation-theory-2014-original-1988,Goldman-Millson-homotopy-invariance-of-Kuranishi-space-they-mention-the-principle-of-defor-theory-1990,Kontsevich-Soibelman-Deformation-theory-notes-2002,Hinich-dg-coalgebras-as-formal-stacks-2001,Manetti-dgLas-and-formal-deformation-theory-2009}. Around fifteen years ago, Lurie \cite{Lurie-derived-algebraic-geometry-formal-moduli-problems-2011} and
Pridham \cite{Pridham-Unifying-derived-deformation-theories-2010} formalized this heuristic philosophy of deformation theory into an equivalence between formal moduli problems and differential graded Lie algebras in characteristic
zero, using higher category theory.

Returning to the main focus of this paper; given a Lie algebra
$\got g$ and a Lie ideal $\got i \lhd\got g$, the following question arises where the adjective ``controlling" is
meant in the aforementioned sense.

\begin{center}
\textbf{Question 1: What is the controlling differential graded Lie algebra of the deformation problem of a Lie ideal $\got i\lhd \got g$?}
\end{center}

This paper answers the above question by describing explicitly the deformation cochain complex
 together with its graded Lie algebra structure and proving that it controls the ``small" deformations of the Lie ideal $\got i\lhd \got g$.
Moduli theory deals with the study of the geometry of moduli spaces, which are spaces
whose points represent equivalence classes of algebra-geometric objects. To the best of the
knowledge of the authors, the prototypical and perhaps the most elegant example of a moduli space is
the (real) Grassmannian of a vector space. Given an $n$-dimensional vector space $V$, the set of isomorphism
classes of $k$-dimensional vector subspaces, denoted $\Gr_k(V)$, carries a natural smooth manifold structure of dimension equal to $k(n-k)$ and is called the \emph{$k$-Grassmannian of $V$}. Except
for this beautiful example, in almost all other cases it is difficult to understand (at least immediately) the global geometry of a moduli space, due to the presence of singularities and/or of
infinite dimensionality. Hence the infinitesimal nature of a moduli space must first
and foremost be understood.  

Some of the questions in deformation theory
concern when a mathematical structure of interest  is rigid or stable under deformations. Roughly speaking, rigidity questions are related to identifying isolated points of the
moduli space of the studied structure, while stability questions concern its local smoothness around the distinguished point.
Given a Lie algebra $\got g$, the space $\mbf I_k(\got g)$ of $k$-dimensional Lie ideals is a
subspace of $\Gr_k(\got g)$. A Lie ideal $\got i\lhd \got g$ is \emph{rigid under the natural action of $\Aut(\got g)$ on $\mbf I_k(\got g)$}
if the space of $k$-dimensional Lie ideals $\mbf I_k(\got g)$ coincides locally, in some open neighborhood
of $\got i \in \mbf I_k(\got g)\subseteq \Gr_k(\got g)$, with the $\Aut(\got g)$-orbit of $\got i$.

\begin{center}
\textbf{
Question 2: Under which assumption is a Lie ideal $\got i\lhd \got g$ $\Aut(\got g)$-rigid?
}
\end{center}

An ideal $\got i\lhd \got g$ in a Lie algebra $(\got g, \mu_g)$  is called \textbf{stable} if for any Lie bracket $\mu'$ on $\got g$ sufficiently close to $\mu_g$,
there exists a Lie ideal $\got i'\lhd (\got g, \mu')$ sufficiently close to $\got i\in \Gr_k(\got g)$

\begin{center}
\textbf{
Question 3: Under what assumption is a Lie ideal $\got i\lhd \got g$ stable?
}
\end{center}

This paper provides sufficient criteria for both the rigidity and
stability  of a Lie ideal $\got i$ in a Lie algebra $\got g$.
By developing deformation cohomologies tailored to ideals, the authors not only establish theoretical foundations but also explore geometric applications, including the use of the Kuranishi map to study obstructions to deformations of ideals in Lie algebras. This work thus represents a novel contribution to the broader landscape of deformation theory and opens up new avenues for future research in both algebraic and geometric contexts.

  \subsection*{Outline of the paper}
Section \ref{prelim}
introduces the mathematical framework necessary for the studies in this paper, including graded vector spaces, differential graded Lie algebras, and $L_\infty$-algebras. These structures form the algebraic foundation for the deformation theories discussed here.
Then standard deformation complexes associated with Lie algebras are reviewed, such as the Chevalley-Eilenberg complex, and the notation and conventions used throughout the paper are set up.
Section \ref{MC_section} in particular
demonstrates how Lie algebras and Lie algebras with representations can be understood as Maurer-Cartan elements of special differential graded Lie algebras. This establishes the framework for associating dgLa structures to deformations.

Section \ref{def_subalgebras_section}
recalls existing results on the deformation theory of Lie subalgebras, including their description as Maurer-Cartan elements in dgLa frameworks. It lays the groundwork for extending these ideas to the case of Lie ideals.
Section \ref{def_coh_ideal1} then defines smooth and infinitesimal deformations of ideals in Lie algebras. It provides examples and clarifies their distinctions from subalgebra deformations.
Then it explores the connections between the deformation cohomology of an  ideal and those for the same ideal as a subalgebra, as well as those of morphisms associated to ideals in Lie algebras, highlighting novel differences and insights.

Section \ref{voronov_ideals} 
constructs a Voronov dataset associated to an ideal in a Lie algebra and uses it to define the dgL[1]a that controls the deformations of the ideal.
It then develops an $L_\infty[1]$-algebra that governs the simultaneous deformations of the Lie bracket and the ideal, providing a unified framework for understanding these processes.

Section \ref{geometric_results_ideals}  introduces the Kuranishi map as a tool for identifying obstructions to deformations. It discusses its role in understanding when deformations can be extended or are blocked,
and it examines then (cohomological) conditions under which ideals remain rigid under perturbations, before also establishing criteria for their stability.

Appendix \ref{appendix_G_k} provides necessary supplementary material on Grassmannian manifolds for the considerations in this paper.

 \subsection*{Acknowledgement}
This paper is included in the PhD thesis of the first author. The authors warmly thank Kalin Krishna, Stefano Ronchi, Ivan Struchiner, Marco Zambon, and Chenchang Zhu for interesting discussions, and in particular 
Karandeep Jandu Singh for many useful comments on an early version of this paper, for interesting discussions and for pointing out to them that Theorem \ref{theorem stability ideals} follows as well from his result \cite[Theorem 3.20]{Singh25} with a weaker assumption, see Remark \ref{theorem stability ideals rem}.
 The first author thanks in particular Miquel Cueca for numerous fruitful and useful discussions during his PhD studies, and Luca Vitagliano for a very stimulating research stay, as well as many insightful comments and discussions.

	\section{Preliminaries}\label{prelim}
This section collects necessary preliminaries for the contents of this paper.
	\subsection{Graded vector spaces and L$_\infty$-algebras}
	L$_\infty$-algebras are used to describe deformations of algebraic structures. Their definition and properties are summarized in this section.

		A \textbf{graded vector space} $\mathbf{V}$ is the 
		direct sum of a family of vector spaces $(V_i \mid i\in\mathbb Z)$, that comes equipped as follows with a grading.
		 An element $v\in\mbf{V}$ is called \textbf{degree-homogeneous} if $v\in V_i$ for some $i\in\mbb{Z}$ and the \textbf{degree} of $v$ is then defined to be $|v|=i$. That is, for
        $i\in\mathbb Z$, the degree $i$ component of $\mathbf V$ (denoted with lower
        index $\mathbf V_i$) equals $V_{i}$. The component $V_i$ of $\mathbf V$ is then written $V_i[-i]$ for recording its degree, i.e.
        \[\mathbf{V}=\bigoplus_{i\in\mbb{Z}}V_i[-i].\] 
        That is, $V_i$, which as a classical vector space has elements of degree $0$, is shifted by $-i$ in the following sense.
        For $k\in\mbb{Z}$, the \textbf{degree $k$-shift} $\mbf{V}[k]$ of $\mathbf V$ is the graded vector space 
		\[\mathbf{V}[k]=\bigoplus_{i\in\mbb{Z}}V_i[-i+k]=\bigoplus_{j\in\mbb{Z}}V_{j+k}[-j],
		\]
		i.e.~with 
		$(\mbf{V}[k])_j=V_{j+k}$ for all $j\in\mathbb Z$. Unless specified otherwise, the vector space $\mathbf V$ has finite dimension. That is, only finitely many of its summands are non-trivial and have then finite dimension.

	The usual constructions with vector spaces can be similarly done in the graded setting.
	Let $\mbf{V}$ and $\mbf{W}$ be two graded vector spaces.
	\begin{enumerate}
		\item The direct sum $\mbf{V}\oplus\mbf{W}$ is given by $(\mbf{V}\oplus\mbf{W})_i=V_i\oplus W_i$ for all $i$, i.e. \[\mathbf{V}\oplus\mathbf W=\bigoplus_{i\in\mbb{Z}}(V_i+W_i)[-i].\] 
		\item The tensor product $\mbf{V}\otimes\mbf{W}$ is graded by $(\mbf{V}\otimes\mbf{W})_i=\oplus_{j+k=i}\ V_j\otimes W_k$, i.e.
		 \[\mathbf{V}\otimes\mathbf W=\bigoplus_{i\in\mbb{Z}}\left(\bigoplus_{j+k=i}V_j\otimes W_k\right)[-i].\] 

		\item The dual $\mbf{V}^*$ of $\mathbf V$ is the graded vector space 
		\[ \mbf{V}^*=\bigoplus_{i\in\mbb{Z}}V_i^*[i].
		\]
		\item The graded vector space  $\textbf{Hom}(\mbf{V},\mbf{W})\simeq \mbf{V}^*\otimes\mbf{W}$ is then defined by \[\textbf{Hom}(\mbf{V},\mbf{W})=\oplus_{i\in\mathbb  Z}\left(\oplus_{j\in\mbb{Z}}\Hom(V_j,W_{i+j})\right)[-i]\] and, as usual, denoted by $\textbf{End}(\mbf{V})\simeq \mbf V^*\otimes\mbf V$ when $\mbf V=\mbf W$.
		
		In particular a (degree 0) \textbf{linear map} $f\colon \mbf{V}\to\mbf{W}$ between graded vector spaces is a collection of degree-preserving linear maps $\{f_i\colon V_i\to W_i\}$. A \textbf{degree $k$ linear map} from $\textbf V$ to $\textbf W$ is a linear map $f\colon \mbf{V}\to\mbf{W}[k]$ i.e.~$f$ is a collection of linear maps $f_i\colon V_i\to W_{i+k}$ for all $i\in\mathbb Z$ (with, by definition, all but finitely many of these maps being $0$).

		\item The tensor algebra $\mbf{T}(\mbf{V})=\oplus_{l\geq 0}\mbf{V}^{\otimes l}$ is graded by the total degree $$|v_1\otimes\cdots\otimes v_l|=|v_1|+\cdots|v_l|$$ 
		since according to the considerations above
		\[ (\mbf{T}(\mbf{V}))_i=\oplus_{l\geq 0}(\mbf{V}^{\otimes l})_i=\oplus_{l\geq 0} \oplus_{i_1+\ldots+i_l=i}V_{i_1}\otimes\ldots\otimes V_{i_l}
		\]
for all $i\in\mathbb Z$. Here, by convention $\mbf V^{\otimes 0}=\mathbb R$ has degree $0$.
		The \textbf{graded symmetric algebra} $\mbf{S}(\mbf{V})$  of $\mbf V$ is the quotient of $\mbf{T}(\mbf{\mbf{V}})$ by the two-sided ideal generated by elements of the form $x\otimes y-(-1)^{|x||y|}y\otimes x$.  Similarly, the \textbf{graded exterior algebra} ${\bigwedge}(\mbf{V})$ of $\mbf V$ is the quotient of $\mbf T(\mbf V)$ by the two-sided ideal generated by elements of the form $x\otimes y+(-1)^{|x||y|}y\otimes x$.
	\end{enumerate}
	
	\begin{rem}
	\begin{enumerate}
	\item The graded vector spaces $\mbf{T}(\mbf{V})$, ${\bigwedge}(\mbf{V})$ and $\mbf{S}(\mbf{V})$ do not have finite rank in general.
\item		The graded tensor algebra in the last construction is in fact bigraded, since an element 
		$ x\in V_{i_1}\otimes\ldots\otimes V_{i_l}$ has  as well the \emph{polynomial degree} $l$. Its bidegree is hence $(l,i_1+\ldots+i_l)$.
		Precisely, the elements of $\mbf T^k\mbf V:=\mbf V^{\otimes k}$ have polynomial degree $k$.
		Analogously, the elements of $\mbf S^k(\mbf V)$ and $\bigwedge^k(\mbf V)$ have polynomial degree $k$.

\end{enumerate}
	\end{rem}
	\bigskip

		A \textbf{graded Lie algebra} $(\mbf{V},[\cdot,\cdot])$ is a graded vector space $\mbf{V}$ equipped with a $\mathbb R$-bilinear bracket $[\cdot,\cdot]\colon \mbf{V}\otimes\mbf{V}\to\mbf{V}$ satisfying the following conditions:
		\begin{enumerate}
			\item the bracket is degree-preserving: $[V_i,V_j]\subset V_{i+j}$ for $i,j\in\mathbb Z$, 
			\end{enumerate}
			and 
\begin{enumerate}\setcounter{enumi}{1}
			\item the bracket is \textbf{graded skew-symmetric}: $[x,y]=-(-1)^{|x||y|}[y,x]$ for all degree-homogeneous elements $x,y\in \mbf V$,
			\item the bracket satisfies the \textbf{graded Jacobi identity} $$[x,[y,z]]=[[x,y],z]+(-1)^{|x||y|}[y,[x,z]]$$
		\end{enumerate}
		for degree-homogeneous elements $x,y,z\in\mbf{V}$.
	\medskip

		A \textbf{differential graded Lie algebra (dgLa)} is then a triple $(\mbf{V},[\cdot,\cdot],\dr)$ where $(\mbf{V},[\cdot,\cdot])$ is a graded Lie algebra and $\dr\colon \mbf{V}\to\mbf{V}$ is a degree 1 linear map such that
		\begin{enumerate}
			\item $\dr[x,y]=[\dr(x),y]+(-1)^{|x|}[x,\dr(y)]$ (that is, $\dr$ is a degree $1$ derivation with respect to the bracket $[\cdot,\cdot]$)
			\item $\dr^2=0$.
		\end{enumerate}

\bigskip

		For $n\in\mathbb N$ and $0\leq i\leq n$ a  permutation $\sigma\in S_n$ is called an \textbf{$(i,n-i)$-unshuffle} if it satisfies $\sigma(1)<\cdots<\sigma(i)$ and $\sigma(i+1)<\cdots<\sigma(n).$ The set of $(i,n-i)$-unshuffles is denoted by $S_{(i,n-i)}$. 
		
	Consider a graded vector space $\mbf V$ as above and two of its degree homogeneous elements $x_1$ and $x_2$. Then as elements of $\mbf S(\mbf V)$,
	$x_1$ and $x_2$ satisfy
	\[ x_1\cdot x_2=(-1)^{|x_1|\cdot|x_2|}x_2\cdot x_1=:\epsilon( (12); x_1,x_2)\cdot x_2\cdot x_1
	\]
	i.e.~with $\epsilon( (12); x_1,x_2)\in\{-1,1\}$ defined by this equation.
	More generally for $x_1,\ldots, x_n\in \mbf V$ degree-homogeneous, the  \textbf{Koszul sign}  $\epsilon(\sigma; x_1,\ldots, x_n)\in \{-1, 1\}$ of a permutation $\sigma$ and $x_1,\ldots, x_n$ is defined by 
	\[ x_1\cdot \ldots\cdot x_n=\epsilon(\sigma; x_1,\ldots, x_n)\cdot x_{\sigma(1)}\cdot \ldots\cdot x_{\sigma(n)}.
	\]
	\begin{defin}
	\begin{enumerate}
\item		An \textbf{$L_{\infty}[1]$-algebra} is a graded vector space $\mbf{V}$  equipped with a
 collection \[\left\{m_k\colon S^k\mbf{V}\to\mbf{V}[1]\right\}_{k\geq 1}\] of linear maps, satisfying the following relations for all homogeneous elements $x_1,\dots,x_n\in\mbf{V}$:
		\begin{equation}\label{eq_L_infty_1}
			\sum_{i+j=n+1}\sum_{\sigma\in S_{(i,n-i)}}\epsilon(\sigma; x_1,\ldots, x_n)m_j(m_i(x_{\sigma(1)},\dots,x_{\sigma(i)}), x_{\sigma(i+1)},\dots, x_{\sigma(n)})=0.
		\end{equation}
		\item A \textbf{dgL$[1]$a} is an $L_\infty[1]$-algebra $(\mbf{V}, m_1, m_2)$, i.e.~with $m_k=0$ for all $k\geq 3$.
		\end{enumerate}
	\end{defin}

	\begin{ex}
		A dgLa structure $([\cdot,\cdot],\dr)$ on a graded vector space $\mbf V$ becomes a dgL$[1]$a structure $(m_1,m_2)$ on $\mbf V[1]$ by setting  $m_1=-\dr$, $m_k=0$ for $k>2$, and by defining $m_2$ by 
		\[m_2(x_1,x_2)=(-1)^{|x_1|}[x_1,x_2]
		\] for degree-homogeneous elements $x_1,x_2\in \mbf V$, where $|x_1|$ is the degree of $x_1$ as an element of $\mbf V$.
	\end{ex}

	\begin{ex}(Lie 2-algebra \cite{Baez-Crans-Lie2algebras-04'})\label{defin of a Lie 2-algebra}
		A \textbf{2-term $L_\infty[1]$-algebra} is a $L_\infty[1]$-algebra defined on 
		a graded\footnote{The peculiar choice of grading versus indices of the summands becomes clear at the end of this example.} vector bundle $\mathbf V= \got g_0[1]\oplus \got g_{-1}[2]$.
		This amounts to 2-term cochain complex of vector spaces $\got{g}_{-1}\stackrel{m_1}{\longrightarrow}\got{g}_0$ together with
		\begin{itemize}
			\item a skew-symmetric bilinear map $m_2^{0}\colon \wedge^2\got{g}_0\to\got{g}_0$ (this map is also written $m_2^0=[\cdot,\cdot]$)
			\item a bilinear map $m_2^1\colon \got{g}_0\otimes\got{g}_{-1}\to\got{g}_{-1}$ (this map is also written $m_2^1=\nabla$)
						\item an alternating trilinear map $m_3\colon \wedge^3\got{g}_{0}\to\got{g}_{-1}$
		\end{itemize}
		satisfying, for all $x,x_1,x_2,x_3,x_4\in\got{g}_0$ and $y,y_1,y_2\in\got{g}_{-1}$, the following conditions:
		\begin{enumerate}[labelindent=\parindent,leftmargin=1.18cm,label=(\roman*)]
			\item $m_1(\nabla_{x}y)=[x,m_1(y)]$
			\item $\nabla_{m_1(y_1)}y_2+\nabla_{m_1(y_2)}y_1=0$
			\item $[x_1,[x_2,x_3]]+[x_2,[x_3,x_1]]+[x_3,[x_1,x_2]]=m_1m_3(x_1,x_2,x_3)$
			\item $\nabla_{x_1}\nabla_{x_2}y-\nabla_{x_2}\nabla_{x_1}y-\nabla_{[x_1,x_2]}y=m_3(x_1,x_2,m_1(y))$
			\item and 
		\begin{equation*}
		\begin{split}
		0=&\sum_{i=1}^4(-1)^{i+1}\nabla_{x_i}(m_3(x_1, \ldots, \widehat{x_i}, \ldots, x_4))+\sum_{i<j}(-1)^{i+j}m_3([x_i,x_j],x_1,\ldots, \widehat{x_i}, \ldots, \widehat{x_j}, \ldots, x_4).
		\end{split}
		\end{equation*}
		\end{enumerate}
In other words, $ \got g_0[0]\oplus \got g_{-1}[1]$ with $(m_1, m_2=[\cdot,\cdot]+\nabla, m_3)$ is a \textbf{Lie 2-algebra}. The maps $m_i$, $i=1,2,3$, are usually written $l_i$ in this context.
	\end{ex}
	
	\begin{rem}
	\begin{enumerate}
	\item In general, a $L_\infty[1]$-algebra becomes a \textbf{$L_\infty$-algebra} (\cite{Lada-Markl-StronglyHomotopy-95'},\cite{Lada-Stasheff-Intro-Linftyalgebras-physicists-93'}) when shifted appropriately by $-1$, see \cite{VoronovHigherDerivedBracketsAndHomotopyAlgebras}, \cite{Manetti-Fiorenza-LinftyAlgebrasOnMappingCones-07'}. Because the setting of $L_\infty$-algebras is not used in this paper, only $L_\infty[1]$-algebras are defined and considered.
\item	A Lie 2-algebra with $l_3=0$ is called a \textbf{strict Lie 2-algebra}. Then by the equations above, $[\cdot,\cdot]$ is a Lie bracket on $\mathfrak g_0$ and $\nabla$ is a representation of $\mathfrak g_0$ on $\mathfrak g_{-1}$. (ii) above defines a skew-symmetric map $[\cdot, \cdot]_{\mathfrak g_{-1}}\colon \wedge^2\mathfrak g_{-1}\to\mathfrak g_{-1}$, $[y_1,y_2]_{\mathfrak g_{-1}}=\nabla_{l_1(y_1)}y_2$ which,  by (iii),  satisfies the Jacobi identity. (i) shows that $l_1\colon \mathfrak g_{-1}\to\mathfrak g_0$ is a morphism of Lie algebras.
	(i) and (iii) then imply as well together that $\nabla\colon \mathfrak g_0\to \got{aut}(\got{g}_{-1})$, i.e.~$\nabla$ is a representation of $\mathfrak g_0$ by derivations of $\mathfrak g_{-1}$. Hence $\left(\mathfrak g_0, \mathfrak g_{-1}, l_1, \nabla\right)$ is a \textbf{crossed-module of Lie algebras} in the following sense.

	\item	A \textbf{crossed module of Lie algebras} is a pair of Lie algebras $(\got{g}_{-1},\got{g}_0)$ together with a Lie algebra morphism $\phi\colon \got{g}_{-1}\to\got{g}_0$ and a Lie algebra action by derivations $\psi\colon\got{g}_0\to\got{aut}(\got{g}_{-1})$ satisfying, for any $x\in\got{g}_0$ and $y_1,y_2\in\got{g}_{-1}$, the following two conditions:
		\begin{equation*}
			\phi(\psi_{x}(y_1))=[x,\phi(y_1)]_{\got{g}_{0}} \quad \text{and} \quad \psi_{\phi(y_1)}(y_2)=[y_1,y_2]_{\got{g}_{-1}}.
		\end{equation*}
	\end{enumerate}
		The above considerations establish an equivalence between crossed-modules of Lie algebras and strict Lie 2-algebras.
	
	\end{rem}
	
	\begin{ex}
	Let $\got g$ be a Lie algebra and let $\got i$ be an ideal in $\got g$. Then $\got g[0]\oplus \got i[-1]$ becomes a strict Lie 2-algebra with the inclusion $l_1\colon\got i\hookrightarrow \got g$, the Lie bracket $l_2^0=[\cdot\,,\cdot]$ on $\got g$ and the restriction to $\got i$ of the adjoint representation of $\got g$:  $l_2^1=\ad\colon \got g\otimes\got i\to \got i$, $l_2^1(x\otimes y)=[x,y]\in\got i$ for $x\in\got g$ and $y\in \got i$.
	\end{ex}

	\bigskip
	Voronov introduced in \cite{VoronovHigherDerivedBracketsAndHomotopyAlgebras} a construction of $L_\infty[1]$-algebras, see also \cite{Zambon-Fregier-SimultaneousDefOfAlgebraAndMorphisms-15'} for the following approach to it.
	\begin{defin}
		A quadruple $(L,\got{a},P,\Theta)$ where
		\begin{enumerate}
			\item $L=\oplus_{i\in\mbb{Z}}L_i[-i]$ is a graded Lie algebra with Lie bracket $[\cdot,\cdot]$,
			\item $\got{a}$ is an abelian graded Lie subalgebra of $L$,
			\item $P\colon L\to\mathfrak{a}$ is a linear projection such that $\ker P$ is a graded Lie subalgebra,
			\item $\Theta$ is an element of $L_1[-1]$ (i.e.~of degree $1$) such that $\Theta\in\ker P$ and $[\Theta,\Theta]=0$
		\end{enumerate}
		is called here a \textbf{Voronov-dataset}. 
	\end{defin}
	
	Voronov proves in \cite{VoronovHigherDerivedBracketsAndHomotopyAlgebras} and \cite{VoronovHigherDerivedBracketsForArbitraryDerivations-05'} the following two theorems. 
	\begin{thm}[\cite{VoronovHigherDerivedBracketsAndHomotopyAlgebras}]\label{theorem Voronov for L-infty algebra using higher derived brackets}
		Let $(L,\mathfrak{a},P,\Theta)$ be a Voronov-dataset.  The multibrackets $m_k\colon S^k\mathfrak{a}\to\mathfrak{a}[1]$ given by
		\begin{equation*}
			m_k(a_1, \ldots,  a_k)=P[[\ldots[[\Theta,a_1],a_2],\ldots], a_k],
		\end{equation*}
		for all $a_1,\ldots, a_k\in \got a$ determine an $L_\infty[1]$-algebra structure on $\mathfrak{a}$.
	\end{thm}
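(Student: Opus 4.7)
My plan has two parts: first, verify that each $m_k$ is graded symmetric, so that it descends to a degree-$1$ map $S^k\got{a}\to\got{a}$; second, derive the $L_\infty[1]$-relations \eqref{eq_L_infty_1} from the Maurer--Cartan condition $[\Theta,\Theta]=0$, together with the graded Jacobi identity in $L$, the abelianness of $\got{a}$, and the fact that $\ker P$ is a graded Lie subalgebra of $L$.

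For the symmetry, set $T_k(a_1,\ldots,a_k):=[[\ldots[\Theta,a_1],a_2],\ldots,a_k]$ so that $m_k=P\circ T_k$. With $X:=[[\ldots[\Theta,a_1],\ldots],a_{j-1}]$, the graded Jacobi identity expands
$$[[X,a_j],a_{j+1}]=[X,[a_j,a_{j+1}]]-(-1)^{|X||a_j|}[a_j,[X,a_{j+1}]];$$
the first summand vanishes because $\got{a}$ is abelian, and graded skew-symmetry applied twice to the second summand yields $[[X,a_j],a_{j+1}]=(-1)^{|a_j||a_{j+1}|}[[X,a_{j+1}],a_j]$. Iterating this identity on adjacent transpositions shows that $T_k$, and hence $m_k$, is graded symmetric in $a_1,\ldots,a_k$; a degree count ($|\Theta|=1$) confirms $|m_k|=1$.

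For the $L_\infty[1]$-relations, introduce $\delta:=[\Theta,-]\colon L\to L$. By graded Jacobi, $\delta$ is a degree-$1$ derivation of $[\cdot,\cdot]$, and $2\delta^2(x)=[[\Theta,\Theta],x]=0$ gives $\delta^2=0$. The key step is to rewrite the left-hand side of \eqref{eq_L_infty_1} as the $P$-projection of a sum of iterated brackets that, after repeated Jacobi rearrangement and using abelianness of $\got{a}$, collapses to $P\bigl(\delta^2(\cdots)\bigr)=0$. The shuffle sum over $(i,n-i)$-unshuffles encodes the combinatorial expansion of $\delta$ acting by Leibniz on a product of homogeneous elements of $\got{a}$; inserting $\mathrm{id}_L=P+(\mathrm{id}-P)$ between the inner $T_i$-block and the outer iterated bracketing converts $m_j(m_i(\cdot),\cdot)$ into its $\delta$-iterated-bracket form, up to a $\ker P$-valued remainder whose contribution, after the outer Jacobi expansion, lies in $\ker P$ (since $\ker P$ is a subalgebra containing $\Theta$) and therefore disappears under the final projection $P$.

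The main obstacle is the careful bookkeeping of Koszul signs across the Jacobi manipulations and unshuffles, and in particular matching the sign $(-1)^{i(j-1)}$ in \eqref{eq_L_infty_1} with the signs naturally produced by moving the odd derivation $\delta$ across graded products. The cleanest conceptual organisation, which is Voronov's, is to package the family $\{m_k\}$ into a single coderivation $M$ on the cofree cocommutative coalgebra $S(\got{a})$ and to verify $M^2=0$ directly — this condition is equivalent to the full family \eqref{eq_L_infty_1} — and thereby reduce the combinatorics to transporting the identity $\delta^2=0$ from $L$ to $S(\got{a})$ along the projection $P$.
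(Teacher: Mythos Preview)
The paper does not supply its own proof of this theorem; it is quoted from \cite{VoronovHigherDerivedBracketsAndHomotopyAlgebras} as a tool, so there is no in-paper argument to compare against. Your symmetry argument for the $m_k$ is correct and complete.

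Your sketch of the $L_\infty[1]$-relations has a gap at the remainder step. Writing $c:=(\id-P)T_i(a_{\sigma(1)},\ldots,a_{\sigma(i)})\in\ker P$, the remainder contribution is
\[
P\bigl[\ldots\bigl[[\Theta,c],a_{\sigma(i+1)}\bigr],\ldots,a_{\sigma(n)}\bigr].
\]
You argue that this vanishes because $\ker P$ is a subalgebra containing $\Theta$, so $[\Theta,c]\in\ker P$. That much is true, but the \emph{next} bracket is $\bigl[[\Theta,c],a_{\sigma(i+1)}\bigr]$ with $a_{\sigma(i+1)}\in\got{a}$, and there is no reason for this to lie in $\ker P$: the splitting $L=\got{a}\oplus\ker P$ is only linear, and $[\ker P,\got{a}]\not\subset\ker P$ in general. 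So the remainder does not disappear termwise under $P$; what must be shown is that the \emph{sum} of all remainder terms over $i$ and over the unshuffles $\sigma$ cancels, and that is precisely where the delicate combinatorics lives. The phrase ``after the outer Jacobi expansion, lies in $\ker P$'' hides the real work rather than doing it.

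Your final paragraph is the honest resolution: packaging $\{m_k\}$ as a single coderivation $M$ on $S(\got{a})$ and verifying $M^2=0$ is Voronov's actual approach in \cite{VoronovHigherDerivedBracketsAndHomotopyAlgebras}, and it is what you should carry out rather than merely invoke. If you prefer a direct proof without coalgebras, the computation in Voronov's paper organises the full double sum so that it equals $P$ applied to an expression beginning with $[\Theta,[\Theta,\,\cdot\,]]$, which vanishes by $\delta^2=0$; the remainder terms are absorbed into this telescoping identity rather than discarded separately by a $\ker P$-argument.
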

	
	\begin{defin}
	\begin{enumerate}
\item		A \textbf{Maurer-Cartan element} of a dgLa $(\mbf{V},[\cdot,\cdot],\dr)$ is an element $x\in\mbf V$ of degree $1$ satisfying the following equation:
		\begin{equation}\label{Maurer-Cartan equation defin}
			\dr(x)+\frac{1}{2}[x,x]=0.
		\end{equation}
		This equation \eqref{Maurer-Cartan equation defin} is called the  \textbf{Maurer-Cartan equation}.
		\item More generally, a \textbf{Maurer-Cartan element} of a $L_\infty[1]$-algebra $(\mbf V, m_1, m_2,\ldots, m_l)$ with finitely many multibrackets is an element $x$ of degree $0$ in $\mbf V$ such that
		\[ \sum_{k=1}^l\frac{1}{k!} m_k(x, \ldots, x)=0.
		\]
		\end{enumerate}
	\end{defin}

	Voronov's version of the following theorem in \cite{VoronovHigherDerivedBracketsForArbitraryDerivations-05'} is more general since it considers general derivations of the graded Lie algebra. The following version is stated in \cite{Zambon-Fregier-SimultaneousDefOfAlgebraAndMorphisms-15'} and focuses on inner derivations.
	\begin{thm}[\cite{VoronovHigherDerivedBracketsForArbitraryDerivations-05'},\cite{Zambon-Fregier-SimultaneousDefOfAlgebraAndMorphisms-15'}]\label{theorem Voronov for L-infty algebra on the mapping cone}
		Given a Voronov-dataset $(L,\got{a},P,\Theta)$, set $D:=[\Theta,\cdot]$. The following brackets induce an $L_{\infty}[1]$-algebra structure on the space $L[1]\oplus\got{a}$:
		\begin{enumerate}
			\item The unary bracket is given by $m_1(x,a)=(-(Dx), P(x+Da))$ for $x\in L$ and $a\in \got a$.
			\item The binary bracket is given by\footnote{Here, $|x|$ is the degree of $x$ as an element of $L$, and so its degree as an element of $L[1]$ is $|x|-1$.}  $m_2(x,y)=(-1)^{|x|}[x,y]$ for $x,y\in L$.
			\item For $k\geq 2$ the $k$-ary bracket is given by \[ m_k(x,a_1,\dots,a_{k-1})=P[\dots[x,a_1],\dots,a_{k-1}]\]
			and 
			\[m_k(a_1,\dots,a_k)=P[\dots[Da_1,a_2],\dots,a_k]\]
			for $x\in L$ and $a_1,\dots,a_k\in\got{a}$, and it vanishes on any different combination of elements of $L[1]\oplus \got a$.
		\end{enumerate}
		
	\end{thm}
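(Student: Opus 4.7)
The proof reduces the statement to Theorem~\ref{theorem Voronov for L-infty algebra using higher derived brackets} by constructing an auxiliary Voronov-dataset $(\tilde L, \tilde{\got a}, \tilde P, \tilde \Theta)$ whose associated higher-derived-bracket $L_\infty[1]$-algebra on $\tilde{\got a}$ agrees, after identification, with the structure claimed on $L[1]\oplus\got a$.

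The first step is to build $\tilde L$ as an extension of $L$ carrying a shifted copy of $L$ in the appropriate degrees. A natural first attempt is $\tilde L:=L\otimes \mathbb R[\epsilon]/(\epsilon^2)$ with $\epsilon$ a formal generator of degree $-1$, so that $\tilde L\cong L\oplus \epsilon L$ with $\epsilon L\cong L[1]$ an abelian graded ideal. One sets $\tilde\Theta:=\Theta\in L\subset\tilde L$ (inheriting $[\tilde\Theta,\tilde\Theta]=0$) and defines $\tilde P\colon \tilde L\to\tilde{\got a}$ to extend $P$ by the identity on $\epsilon L$. This tensor construction is not quite sufficient as it stands: $\tilde{\got a}=\epsilon L\oplus\got a$ fails to be abelian in $\tilde L$ because the cross-bracket $[\got a,\epsilon L]=\epsilon[\got a,L]$ need not vanish, so the bracket of $\tilde L$ must be further modified (in the spirit of the mapping-cone construction of~\cite{Zambon-Fregier-SimultaneousDefOfAlgebraAndMorphisms-15'}) so as to enforce the abelian-subalgebra condition. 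Producing this modification while still having $\ker\tilde P$ be a graded Lie subalgebra of $\tilde L$ and $[\tilde\Theta,\tilde\Theta]=0$ is the principal technical point of the construction.

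Once the dataset is in place, Theorem~\ref{theorem Voronov for L-infty algebra using higher derived brackets} immediately produces an $L_\infty[1]$-algebra on $\tilde{\got a}$. The remaining work is to expand the derived brackets
\[
m_k(\tilde a_1,\dots,\tilde a_k)=\tilde P\,[\dots[[\tilde\Theta,\tilde a_1],\tilde a_2],\dots,\tilde a_k]
\]
case by case, according to how many of the $\tilde a_i$ lie in $\epsilon L\cong L[1]$. All arguments in $\got a$ reproduce the formula $m_k(a_1,\dots,a_k)=P[\dots[Da_1,a_2],\dots,a_k]$ of Theorem~\ref{theorem Voronov for L-infty algebra using higher derived brackets}, while a single argument $x$ in $L[1]$ yields the mixed formula $m_k(x,a_1,\dots,a_{k-1})=P[\dots[x,a_1],\dots,a_{k-1}]$; the signs appearing in $m_1(x,a)$ and the factor $(-1)^{|x|}$ in $m_2(x,y)$ arise from the Koszul rule when $\epsilon$ is commuted past elements of $L$. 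Any two arguments in $L[1]$ generate $\epsilon^2=0$ at some nested bracket, which yields the stated vanishing.

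A direct alternative would bypass the auxiliary dataset and verify the $L_\infty[1]$-relations~\eqref{eq_L_infty_1} by case analysis on the number of arguments in each of $L[1]$ and $\got a$, using (i) the graded Jacobi identity of $L$, (ii) $D^2=0$ (which follows from $[\Theta,\Theta]=0$), (iii) the derivation property of $D$ with respect to $[\cdot,\cdot]$, and (iv) the hypothesis that $\ker P$ is a graded Lie subalgebra. In either approach, the main difficulty is the bookkeeping of Koszul signs across the sum over $(i,n-i)$-unshuffles; each of the two routes essentially replicates, in a doubled setting, the sign combinatorics that already underlies the proof of Theorem~\ref{theorem Voronov for L-infty algebra using higher derived brackets}.
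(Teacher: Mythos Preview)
The paper does not prove this theorem; it is quoted from \cite{VoronovHigherDerivedBracketsForArbitraryDerivations-05'} and \cite{Zambon-Fregier-SimultaneousDefOfAlgebraAndMorphisms-15'} and used as a black box. So there is no in-paper argument to compare your proposal against, and your sketch must be judged on its own.

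As a sketch, it correctly identifies two viable routes but completes neither. In the first route you explicitly flag the crucial step --- modifying the bracket on $\tilde L=L\oplus\epsilon L$ so that $\tilde{\got a}=\epsilon L\oplus\got a$ becomes abelian while $\ker\tilde P$ stays a subalgebra and $[\tilde\Theta,\tilde\Theta]=0$ --- and then leave it open. This is not a minor omission: the obvious candidate, redefining $[x,\epsilon y]$ to depend only on the $\ker P$–component of $x$, breaks the graded Jacobi identity (one would need $(1-P)[x,x']=[(1-P)x,(1-P)x']$, which fails in general). In fact the cited references do \emph{not} reduce to Theorem~\ref{theorem Voronov for L-infty algebra using higher derived brackets} as stated here. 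Voronov's second paper first proves a strict generalisation of Theorem~\ref{theorem Voronov for L-infty algebra using higher derived brackets} in which the inner derivation $[\Theta,\cdot]$ is replaced by an arbitrary odd derivation $D$ of $L$ preserving $\ker P$ (no square-zero assumption, yielding a curved $L_\infty$-algebra in general); the mapping-cone statement is then obtained by applying that generalisation to a suitable derivation of $L[1]\oplus L$. Your $\epsilon$-extension idea is close in spirit, but the reduction target should be the \emph{derivation} version, not the Maurer--Cartan version, and that is what makes the abelian-subalgebra obstruction disappear.

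Your second route --- direct case-by-case verification of \eqref{eq_L_infty_1} using $D^2=0$, the derivation property of $D$, the Jacobi identity in $L$, and the subalgebra property of $\ker P$ --- is entirely sound and is essentially how Voronov argues. If you intend this as your actual proof, you should carry out at least the representative cases (all $a_i\in\got a$; exactly one argument in $L[1]$; two arguments in $L[1]$) rather than merely asserting that the signs work out.
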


	\subsection{Deformation complexes}
	This section only considers finite-dimensional real Lie algebras.
		Given a Lie algebra $(\got{g},\mu\eqdef[\cdot,\cdot])$ and a representation $r\colon \got{g}\to\got{gl}(V)$ of $\got{g}$ on some vector space $V$, there is an associated complex $C^\bullet_r(\got{g}; V)\coloneqq \wedge^\bullet\got{g}^*\otimes V$ called the \textbf{Chevalley-Eilenberg complex with coefficients in $V$}. Its differential $\delta_{\got g}^r\colon C^\bullet (\got{g}; V)\to C^{\bullet+1}(\got{g}; V)$ is defined for all $k\geq0$ and $\omega\in C^k(\mathfrak g; V)$ by
	\begin{align}\label{Chevalley-Eilenberg differential formula with values in a representation}
		\delta_{\got g}^r\omega(x_1,\dots, x_{k+1})\coloneqq&\sum_{i=1}^{k+1} (-1)^{i+1}r(x_i) \omega\left(x_1,\dots,\widehat{x_i},\dots, x_{k+1}\right)\nonumber\\
		&+\sum_{i<j}(-1)^{i+j}\omega\left([x_i,x_j],x_1,\dots,\widehat{x_i},\dots,\widehat{x_j},\dots,x_{k+1}\right).
	\end{align}
	The associated cohomology is denoted by $H_r^\bullet(\got{g}; V)$. Section \ref{def_subalgebras_section} illustrates how strongly the complexes in the following list are linked with deformation theories in the context of Lie algebras (see \cite{Crainic-Ivan-Schaetz} and references therein).

	\begin{enumerate}
		\item The \textbf{deformation complex of a Lie algebra $\got{g}$} is the Chevalley-Eilenberg complex $C^\bullet_{\rm ad}(\got{g};\got{g}):=\wedge^\bullet\got{g}^*\otimes\got{g}$ with the representation $\ad\colon \got{g}\to\got{gl}(\got{g})$. The corresponding differential is accordingly denoted by $\delta^{\ad}_{\got g}$ 		and its cohomology is written $H^\bullet_{\rm ad}(\got{g};\got{g})$.
		\item The \textbf{deformation complex of a Lie algebra morphism $\phi\colon\got{g}\to\got{h}$} is given by the Chevalley-Eilenberg complex $C^\bullet_{\rm def}(\phi):=\wedge^\bullet\got{g}^*\otimes\got{h}$ with the representation $\phi^*\ad^{\got{h}}\colon \got{g}\to\got{gl}(\got{h})$. The corresponding differential is here denoted by $\delta_{\phi}$ for simplicity and its cohomology by $H^\bullet_\phi(\got{g};\got{h})$.
		\item The \textbf{deformation complex of a Lie subalgebra $\got{h}\subset\got{g}$} is given by the Chevalley-Eilenberg complex $C^\bullet_{\rm def}(\got{h}\subset\got{g}):=\wedge^\bullet\got{h}^*\otimes\got{g}/\got{h}$ with the \textbf{Bott representation}
		\begin{equation*}
			{\rm ad}^{\text{Bott}}\colon \got{h}\to\got{gl}(\got{g}/\got{h}), \quad {\rm ad}^{\text{Bott}}_u(\bar{y})=\overline{[u,y]}.
		\end{equation*}
		The corresponding differential is denoted by $\delta^{\rm Bott}_{\got{h}}$   and its cohomology is written $H^\bullet_{\rm Bott}(\got{h};\got{g}/\got{h})=:H^\bullet_{\rm def}(\got h\subset \got g)$.
	\end{enumerate}

The following result is an inspiration for similar results on the relations between the different cohomologies associated to ideal in Lie algebras, see Section \ref{rel_other_coh}.
	\begin{rem}
	\begin{enumerate}
	\item Let $\got g$ be a Lie algebra and let $\got h\subseteq \got g$ be a Lie subalgebra.
	The monomorphism $\iota\colon \got{h}\hookrightarrow\got{g}$ of Lie algebras induces the following short exact sequence of cochain complexes
	\begin{equation*}
		\begin{tikzcd}
			C^\bullet_{\rm ad}(\got{h};\got{h})\stackrel{\iota_*}{\hookrightarrow} C^\bullet_{\rm def}(\iota)\stackrel{\pi_*}{\twoheadrightarrow}C^\bullet_{\rm def}(\got{h}\subset \got{g})
		\end{tikzcd}
	\end{equation*}
	where $\iota_*(\phi)(x_1,\dots,x_k)=\iota(\phi(x_1,\dots,x_k))$ and $\pi_*(\psi)(x_1,\dots,x_k)=\pi(\psi(x_1,\dots,x_k))$ for $\phi\in \wedge^k\got{h}^*\otimes\got{h}$, $\psi\in \wedge^k\got h^*\otimes \got g$ and $x_1,\ldots, x_k\in\got{h}$.

The equalities
	\[ \iota_*\circ\delta_{\ad}=\delta_\iota\circ \iota_* \quad \text{ and } \quad \pi_*\circ \delta_{\iota}=\delta_{\got{h}}\circ \pi_* 
	\] are immediate.

\item 	Consider the cokernel  complex 
	\[ \Coker^\bullet(\iota_*)\coloneqq \frac{C^\bullet_{\rm def}(\iota)}{C^\bullet(\got{h};\got{h})}=\frac{\wedge^\bullet\got{h}^*\otimes\got{g}}{\wedge^\bullet\got{h}^*\otimes\got{h}}
	\]
	with the differential $\overline{\delta_\iota}$ defined by the equation $\overline{\delta_\iota}\circ p=p\circ\delta_\iota$, where \[p\colon 
	C_{\rm def}^\bullet(\iota)\to \Coker^\bullet(\iota_*) 
	\] is the quotient map.

The cokernel  complex is then canonically isomorphic to the deformation complex of Lie subalgebras  $C^\bullet_{\rm def}(\got{h}\subset\got{g})$, via the unique map $\widehat{\pi_*}\colon \Coker^\bullet(\iota_*) \to C^\bullet_{\rm def}(\got{h}\subset \got{g})$ such that 
\begin{equation*}
		\begin{tikzcd}
			C_{\rm def}^\bullet(\iota) \arrow[r, "\pi_*"] \ar[d, swap, "p"] & C^\bullet_{\rm def}(\got{h}\subset \got{g})\\
			\Coker^\bullet(\iota_*) \ar[ur, dashed , swap, "\widehat{\pi_*}"] 
		\end{tikzcd}
	\end{equation*}
	commutes.
\medskip

The existence of $\widehat{\pi}$ is just an application of the first isomorphism theorem. 
	Compute
	\[\widehat{\pi_*}\circ\overline{\delta_\iota}\circ p=\widehat{\pi_*}\circ p\circ \delta_\iota=\pi_*\circ \delta_\iota=\delta_{\got{h}}\circ\pi_*
	=\delta_{\got{h}}\circ\widehat{\pi_*}\circ p\]
	follows. Since $p$ is surjective, this shows $\widehat{\pi_*}\circ\overline{\delta_\iota}=\delta_{\got{h}}\circ\widehat{\pi_*}$.
	\end{enumerate}
	\end{rem}
	
	\subsection{Lie algebraic structures as Maurer-Cartan elements }\label{MC_section}

	This section collects 
	two  situations, the ones of Lie algebras and of Lie algebras with representations, 
	where the considered Lie structures are realised as Maurer-Cartan elements in appropriate graded Lie algebras, and their deformations then become Maurer-Cartan elements in the obtained differential graded Lie algebras.
	\subsubsection{Lie algebras as Maurer-Cartan elements \cite{Nijenhuis-Richardson-Deformations-of-Lie-algebra-structures-67'}}\label{sec_LA_MC}
	Consider   a vector space $V$ and the graded Lie algebra $L\eqdef C^\bullet(V;V)[1]=\wedge^{\bullet+1}V^*\otimes V$ (recall that this notation means that the elements of $\wedge^{r+1}V^*\otimes V$ have degree $r$) with the graded Lie algebra bracket given by 
	\begin{align}\label{eq_bracket_explicit}\llbra \xi, \eta\rrbra=(-1)^{(r-1)\cdot (s-1)}\xi\circ\eta-\eta\circ\xi\in\wedge^{r+s-1}V^*\otimes V=(C^\bullet(V;V)[1])_{r+s-2}\end{align}
		on tensors $\xi\in \wedge^r V^*\otimes V=(C^{\bullet}(V;V)[1])_{r-1}$ and $\eta\in \wedge^sV^*\otimes V=(C^{\bullet}(V;V)[1])_{s-1}$ 
			(of degrees $r-1$, and $s-1$, respectively),
			where:
	\begin{align*}
		(\xi\circ\eta)(x_1,\dots,x_{r+s-1})&=\sum_{\tau\in S_{(s,r-1)}}(-1)^{\tau}\xi(\eta(x_{\tau(1)},\dots,x_{\tau(s))}),x_{\tau(s+1)},\dots,x_{\tau(r+s-1)})
		\end{align*}
						on all $x_1,\ldots, x_{r+s-1}\in V$.
			A simple computation using \eqref{eq_bracket_explicit} shows that a degree $1$-element $\mu\in\wedge^2V^*\otimes V$ satisfies $\llbra\mu,\mu\rrbra=-2\cdot \operatorname{Jac}_\mu$, hence  \begin{equation}\label{Lie_Brackets_mu}
			\llbra \mu, \mu\rrbra= 0 \quad \text{ if and only if }\quad \mu \text{ is a Lie bracket on } V. 
\end{equation}
The operator $\llbra \mu,\cdot\rrbra\colon \wedge^{\bullet+1}V^*\otimes V\to \wedge^{\bullet+2}V^*\otimes V$ is then the Chevalley-Eilenberg differential $\delta^{\ad}_{\got g}$ defined by the Lie bracket $\mu$, and $(C^\bullet(V;V)[1], \llbra \cdot\,,\cdot\rrbra, \delta^{\ad}_{\got g})$ is a dgLa. This follows also in a straightforward manner from the formula \eqref{eq_bracket_explicit}.

It is then immediate that for $\widetilde\mu\in \wedge^2V^*\otimes V$, the sum $\mu+\widetilde\mu$ is a Lie algebra bracket on $V$ if and only if 
\[ \llbra \widetilde \mu, \widetilde \mu\rrbra+2\delta^{\ad}_{\got g}(\widetilde \mu)=0,
\]
i.e.~if and only if $\widetilde \mu$ is a Maurer-Cartan element of the differential graded Lie algebra $(C^\bullet(V;V)[1], \llbra\cdot\,,\cdot\rrbra, \delta^{\ad}_{\got g})$.

\subsubsection{Lie algebras and representations as Maurer-Cartan elements \cite{Zambon-Fregier-SimultaneousDefOfAlgebraAndMorphisms-15'}}\label{graded_LA_LA+rep}
Now, let $V$ and $W$ be two real and finite-dimensional vector spaces and consider 
\[L\eqdef C^\bullet(V, V)[1]\oplus C^{\bullet}(V, \operatorname{End}(W))
\]
That is, the space of elements of degree $p\geq 0$ of $L$ is
\[\left(\wedge^{p+1}V^*\otimes V \right)\oplus\left(\wedge^p V^*\otimes\operatorname{End}(W)\right)
\]
and 
\[ V\simeq\wedge^0V\otimes V
\]
is the space of elements of degree $-1$ of $L$.
Then $L$ is equipped with a graded Lie bracket defined as follows.  Choose $\xi, \xi'\in\wedge^{\bullet+1}V^*\otimes V$ of degrees $p$ and $p'$, and $\eta, \eta'\in\wedge^\bullet V^*\otimes\operatorname{End}(W)$ of degrees $q$ and $q'$, respectively. Then
	\begin{align}
		\llbra \xi, \xi'\rrbra&=(-1)^{p\cdot p'}\xi\circ\xi'-\xi'\circ\xi\in\wedge^{p+p'+1}V^*\otimes V\label{eq1_LALArep}\\
		\llbra \xi, \eta\rrbra&=
		\left\{\begin{array}{ll}
		-\eta\circ \xi\in\wedge^{p+q}V^*\otimes\operatorname{End}(W) & \text{ if } q\geq 1\\
		\quad 0 & \text{ if } q=0\end{array}\right.
		\label{eq2_LALArep}\\
		\llbra \eta, \eta'\rrbra&=(-1)^{qq'}\eta\circ \eta'-\eta'\circ \eta\in\wedge^{q+q'}V^*\otimes\operatorname{End}(W)\label{eq3_LALArep}
	\end{align}
	where $\xi\circ\xi'$ is defined as in \eqref{eq_bracket_explicit} and 
	\begin{align*}
		(\eta\circ \xi)(x_1,\dots,x_{p+q})&=\sum_{\tau\in S_{(p+1,q-1)}}(-1)^{\tau}\eta(\xi(x_{\tau(1)},\dots,x_{\tau(p+1)}),x_{\tau(p+2)},\dots,x_{\tau(p+q)})\\
		(\eta\circ \eta')(x_1,\dots,x_{q+q'})&=\sum_{\tau\in S_{(q',q)}}(-1)^{\tau}\eta(x_{\tau(q'+1)},\dots,x_{\tau(q+q')})\circ \eta'(x_{\tau(1)},\dots,x_{\tau(q')})
	\end{align*}
	on $x_i\in V$.
	Consider an element $\mu+\rho\in \left(\wedge^{2}V^*\otimes V \right)\oplus\left(\wedge^1 V^*\otimes\operatorname{End}(W)\right)$ of degree $1$ in $L$.
Then 
\[ \left\llbra \mu+\rho, \mu+\rho\right\rrbra=(-2\mu\circ \mu)+(-2\rho\circ \mu-2\rho\circ\rho)
=(-2\operatorname{Jac}_\mu)+(-2\rho\circ \mu-2\rho\circ\rho).
\]
Compute on $x_1,x_2\in V$
\begin{equation*}
\begin{split}
(-\rho\circ\mu-\rho\circ\rho)(x_1,x_2)=-\rho(\mu(x_1,x_2))-\rho(x_2)\circ\rho(x_1)+\rho(x_1)\circ\rho(x_2).
\end{split}
\end{equation*}
This shows that $ \left\llbra \mu+ \rho, \mu+ \rho\right\rrbra=0$ if and only if $\mu$ is a Lie bracket on $V$ and $\rho$ is a representation of $(V,\mu)$ on $W$.

	As before, the operator $\delta=\llbra \mu+\rho,\cdot\rrbra$ on $L$ makes $(L, \llbra\cdot\,,\cdot\rrbra, \delta)$ into a differential graded Lie algebra.
	It is then immediate that for $\widetilde\mu\in \wedge^2V^*\otimes V$ and $\widetilde\rho\in \wedge^1 V^*\otimes\operatorname{End}(W)$, the sum $\mu+\widetilde\mu$ is a Lie algebra bracket on $V$ such that $\rho+\widetilde\rho$ is a representation of $(V,\mu+\widetilde\mu)$ on $W$ if and only if 
\[ 0=\llbra (\mu+\widetilde \mu)+(\rho+\widetilde\rho),(\mu+\widetilde \mu)+(\rho+\widetilde\rho)\rrbra
=\llbra \widetilde \mu+\widetilde \rho, \widetilde \mu+\widetilde \rho\rrbra +2\delta_{\mu+\rho}(\widetilde\mu+\widetilde \rho),
\]
i.e.~if and only if $\widetilde \mu+\widetilde \rho$ is a Maurer-Cartan element of the differential graded Lie algebra $(L, \llbra \cdot\,,\cdot\rrbra, \delta_{\mu+\rho})$.

\section{Deformation theory of Lie subalgebras -- recap}\label{def_subalgebras_section}
	
	This section recalls in detail the infinitesimal deformation theory of Lie subalgebras \cite{Crainic-Ivan-Schaetz}, since it is relevant in the deformation of ideals. Let $\got{h}$ be a $k$-dimensional Lie subalgebra of a Lie algebra $\got{g}$ and let $\Gr_k(\got{g})$ be the Grassmannian of $k$-dimensional subspaces of $\got{g}$.
	
	\begin{defin}\label{def_def_subalgebras}
		A \textbf{smooth deformation $(\got{h}_t)_{t\in I}$ of a Lie subalgebra $\got{h}$ inside a Lie algebra $\got{g}$} is a smooth curve $\tilde{\got{h}}:[0,1]\to \Gr_k(\got{g})$ such that $\got{h}_t\coloneqq\tilde{\got{h}}(t)$ is a Lie subalgebra of $\got{g}$ for all $t\in I\coloneqq[0,1]$ and such that $\got{h}_0=\got{h}$.
		
		Two smooth deformations $(\got{h}_t)_{t\in I}$ and $(\got{h}'_t)_{t\in I}$ of $\got{h}$ inside $\got{g}$ are called \textbf{equivalent} if there exists a smooth curve $g\colon I\to G$ (the unique simply-connected integration of $\got{g}$), starting at the identity and such that $\got{h}'_t=\Ad_{g(t)}\got{h}_t$ for each $t\in I$.
	\end{defin}
	
	Appendix \ref{tangent_grk}
 explains in detail how the tangent vectors to $\Gr_k(\got g)$ at $\got h$ are computed and seen as elements of $\got h^*\otimes \got g/\got h \simeq T_{\got h}\Gr_k(\got g)$.
	In short, a  choice of linear complement $\got h^c\subseteq \mathfrak g$ for $\mathfrak h$ in $\mathfrak g$ defines a smooth chart of $ \Gr_k(\got{g})$ centered at $\mathfrak h$. Precisely, the map $\Psi\colon \operatorname{Hom}(\mathfrak h, \got h^c)\to \Gr_k(\got{g})$ sending $\phi$ to $\grap(\phi)$ is a diffeomorphism on its image $U_{\got h,\got h^c}:=\{W\subseteq \got{g}\mid W\oplus \got h^c=\got{g}\}\subseteq \Gr_k(\got{g})$. The inverse $\Psi^{-1}\colon U_{\got h,\got h^c}\to \operatorname{Hom}(\got{h},{\got h^c})$ sends $\mathfrak h$ to $0$.
	
	Via this smooth chart, $I\ni t\mapsto \got h_t\in \Gr_k(\got{g})$ 
	coincides with a  smooth curve $\phi\colon I\to \operatorname{Hom}(\got{h}, \got{h}^c)$, i.e.~$\got h_t=\grap(\phi(t))$ for all $t\in I$.
	(Since only the values of $\tilde{\got{h}}$ in an arbitrarily small neighborhood of $0$ in $I$ are relevant, assume without loss of generality that $\tilde{\got{h}}$ has values in $U_{\got h, \got h^c}$.) Since $\operatorname{Hom}(\got{h}, \got{h}^c)$ is a vector space, the derivative $\dot{\phi}(0)$ is again an element of $\operatorname{Hom}(\got{h}, \got{h}^c)=\got{h}^*\otimes \got{h^c}$. A composition with $\pi_{\got g/\got h}\colon \got g\to\got g/\got h$ defines then 
	\[\pi_{\got g/\got h}\circ \dot{\phi}(0)=\left.\frac{d}{dt}\right\arrowvert_{t=0}\got h_t\]
	as an element of $T_{\got h}\Gr_k(\got g)\simeq \got h^*\otimes\got g/\got h=C^1_{\rm def}(\got{h}\subset\got{g})$.

	\begin{prop}[\cite{Crainic-Ivan-Schaetz}]\label{ass_cocycle_deformations_subalgebras}
		Let $\got{h}$ be a $k$-dimensional Lie subalgebra of a Lie algebra $\got{g}$. If $(\got{h}_t)_{t\in I}$ is a smooth deformation of $\got{h}$ inside $\got{g}$, then $$\dot{\got{h}}_0\coloneqq\fracddtz\got{h}_t\in C^1_{\rm def}(\got{h}\subset\got{g})$$ is a cocycle. Moreover, the corresponding cohomology class only depends on the equivalence class of the deformation.
	\end{prop}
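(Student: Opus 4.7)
My plan is to work entirely in the chart $\Psi\colon \operatorname{Hom}(\got{h},\got{h}^c)\to \Gr_k(\got{g})$, $\phi\mapsto \grap(\phi)$, so that the deformation $(\got h_t)_{t\in I}$ is encoded by a smooth curve $\phi\colon I\to \operatorname{Hom}(\got h,\got h^c)$ with $\phi(0)=0$ and $\got h_t=\{x+\phi(t)(x)\mid x\in\got h\}$. The cocycle $\dot{\got h}_0$ is then the class of $\dot\phi(0)$ in $\got h^*\otimes\got g/\got h$. The first task is to derive the cocycle condition; the second is to check invariance under equivalence.

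For the cocycle condition, fix $x,y\in\got{h}$. The subalgebra condition on $\got h_t$ says that
\[
[x+\phi(t)(x),\,y+\phi(t)(y)]\;=\;z(t)+\phi(t)(z(t))
\]
for some smooth $z\colon I\to \got h$ with $z(0)=[x,y]$. I would expand the bracket, differentiate at $t=0$, and use $\phi(0)=0$ (so the $[\phi(t)(x),\phi(t)(y)]$ term contributes $0$) to obtain
\[
[x,\dot\phi(0)(y)]+[\dot\phi(0)(x),y]\;=\;\dot z(0)+\dot\phi(0)([x,y]).
\]
Projecting to $\got g/\got h$ kills $\dot z(0)\in\got h$ and yields exactly $(\delta^{\rm Bott}_{\got h}\dot{\got h}_0)(x,y)=0$, which is the desired cocycle identity.

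For the equivalence step, let $(\got h_t)$ and $(\got h_t')$ be equivalent via $g\colon I\to G$ with $g(0)=e$, $\dot g(0)=X\in\got g$, and $\got h_t'=\Ad_{g(t)}\got h_t$. Writing $\got h_t'=\grap(\phi'(t))$, the equation $\Ad_{g(t)}(x+\phi(t)(x))=y(t)+\phi'(t)(y(t))$ with $y(0)=x$ gives, upon differentiating at $t=0$ and splitting $[X,x]=[X,x]_{\got h}+[X,x]_{\got h^c}$,
\[
\dot\phi'(0)(x)\;=\;\dot\phi(0)(x)+[X,x]_{\got h^c}.
\]
Projecting to $\got g/\got h$ gives $\dot{\got h}_0'(x)-\dot{\got h}_0(x)=\overline{[X,x]}=\bigl(\delta^{\rm Bott}_{\got h}(-\overline{X})\bigr)(x)$, so the two cocycles differ by the coboundary of $-\overline{X}\in\got g/\got h=C^0_{\rm def}(\got h\subset\got g)$.

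I expect no deep obstacle; the only delicate point is the careful bookkeeping of tangent vectors via the chart $\Psi$ and the decomposition $\got g=\got h\oplus\got h^c$, in particular recognizing that every ``horizontal'' $\got h$-component drops out after projection to $\got g/\got h$, which is precisely why the Bott representation (and not just $\ad$) is the right coefficient system.
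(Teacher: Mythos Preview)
Your proposal is correct and follows essentially the same approach as the paper: the paper encodes the graph map $x\mapsto x+\phi(t)(x)$ as a curve $\alpha(t)\in\GL(\got g)$ and differentiates the vanishing expression $\sigma(t)(x,y)=\pi\circ\alpha(t)^{-1}[\alpha(t)(x),\alpha(t)(y)]$, which is exactly your subalgebra equation $[x+\phi(t)(x),y+\phi(t)(y)]=z(t)+\phi(t)(z(t))$ rewritten, and for equivalence it differentiates $\alpha'(t)=\Ad_{g(t)}\circ\alpha(t)$, matching your computation. The only cosmetic difference is that you keep the auxiliary $\got h$-valued curves $z(t)$ and $y(t)$ explicit, whereas the paper hides them in $\alpha(t)^{-1}$ and $\overline{\alpha(t)}$.
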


	\begin{proof}
		Consider as above $\tilde{\got{h}}$ to be a smooth curve in $\phi\colon I\to \operatorname{Hom}(\got{h}, {\got h^c})$ for a linear complement ${\got h^c}$ of $\got{h}$ in $\got{g}$ and set 
		\[ \alpha\colon I\to \GL(\got{g}), \qquad t\mapsto \begin{pmatrix} \id_{\got{h}} & 0\\ \phi(t)&  \id_{\got h^c}\end{pmatrix},
		\]
		where $\got{g}\simeq\got{h}\oplus {\got h^c}$. Then $\alpha(t)(\got{h})=\grap(\phi(t))=\got{h}_t$  and
		\[ \pi_{\got g/\got h}\circ\dot\alpha(0)\arrowvert_{\got h}=\pi_{\got g/\got h}\circ\dot{\phi}(0)=\dot{\got{h}}_0,
		\]
		see Appendix \ref{tangent_grk}.
		Set similarly $\pi_t\colon \got{g}\to\got{g}/\got{h_t}$ to be the canonical projection for each $t\in I$. Since $(\pi_t\circ\alpha(t))(\got{h})=\pi_t(\got{h}_t)=0$ for all $t\in I$, $\alpha(t)$ factors for each $t$ to $\overline{\alpha(t)}:=\overline{\alpha}(t)$ as in the following commutative diagram.
		\begin{equation}\label{commutative diagram in the proof of the cocycle in Lie subalgebras}
			\begin{tikzcd}
				\got{g}\arrow[r,"\alpha(t)"] \arrow[d,swap,"\pi"] &
				\got{g} \arrow[d,"\pi_t"]\\
				\got{g}/\got{h} \arrow[r,"\overline{\alpha}(t)"] & \got{g}/\got{h}_t
			\end{tikzcd}
		\end{equation}
		The maps $\overline{\alpha}(t)$ are isomorphisms for all $t\in I$. In order to see this, note that 
		for $x\in\got{g}$, $\overline{\alpha}(t)(x+\got{h})=0$ if and only if $\alpha(t)(x)\in \got{h}_t$. Since $\alpha(t)$ is bijective with $\alpha(t)(\got{h})=\got{h}_t$, this is the case if and only if $x\in\got{h}$.

		Since $\got{h}_t\subset\got{g}$ is a Lie subalgebra for each $t\in I$ the map $\sigma\colon I\to \wedge^2\got{h}^*\otimes\got{g}/\got{h}$ defined for all $t\in I$ by		\begin{equation*}
			\sigma(t)(x,y)=\overline{\alpha}(t)^{-1}\circ\pi_t[\alpha(t)(x),\alpha(t)(y)]\stackrel{\eqref{commutative diagram in the proof of the cocycle in Lie subalgebras}}{=}\pi\circ\alpha(t)^{-1}[\alpha(t)(x),\alpha(t)(y)]
		\end{equation*}
		for all $x,y\in\got{h}$, 
		vanishes identically on $I$. Hence, differentiating $t\mapsto \sigma(t)(x,y)$ at $t=0$ for $x, y\in \got h$ yields
		\begin{equation}\label{cocycle condition for Lie subalgebras}
			-\pi\circ\dot{\alpha}(0)[x,y]+\pi[\dot{\alpha}(0)(x),y]+\pi[x,\dot{\alpha}(0)(y)]=0,
		\end{equation}
		which is exactly the cocycle condition $\delta_{\got{h}}(\dot{\got{h}}_0)=\delta_{\got{h}}(\pi_{\got g/\got h}\circ\dot{\alpha}(0)\arrowvert_{\got{h}})=0$. 
		\medskip
		
		Let $(\got{h}_t)_{t\in I}$ and $(\got{h}'_t)_{t\in I}$ be two equivalent deformations of a Lie subalgebra $\got{h}$ of $\got{g}$ and let 
		$g\colon I\to G$ be the smooth curve with $g(0)=e$ and $\got{h}'_t=\Ad_{g(t)}\got{h}_t$ for all $t\in I$.
Then $\Ad_{g(t)}\circ\alpha(t)=:\alpha'(t)$ defines a smooth curve $\alpha'\colon I \to \GL(\got{g})$ starting at the identity with $\alpha'(t)(\got{h})=\got{h}'_t$ for all $t\in I$. A differentiation at $t=0$ yields $$[\dot{g}(0),x]+\dot{\alpha}(0)(x)=\dot{\alpha'}(0)(x)$$
for all $x\in\got{g}$.
 Applying the projection $\pi_{\got g/\got h}\colon \got{g}\to\got{g}/\got{h}$ then leads to $\dot{\got{h}}_0-\dot{\got{h}}'_0=\pi\circ\dot{\alpha}(0)\arrowvert_{\got{h}}-\pi\circ\dot{\alpha'}(0)\arrowvert_{\got{h}}=\delta_{\got{h}}(\dot{g}(0))$.
	\end{proof}

\begin{rem}\label{rem_def_morphisms}
Similarly, a deformation of a Lie algebra morphism gives as follows a deformation cocycle, see \cite{Crainic-Ivan-Schaetz}, and references therein. Let $\phi_0\colon \got g\to \got h$ be a morphism of Lie algebras and let $\phi\colon I \to \got g^*\otimes \got h$ be a smooth curve defined on an open interval $I$ containing $0$, such that $\phi(t)\colon \got g\to \got h$ is a morphism of Lie algebras for all $t\in I$, and such that $\phi(0)=\phi_0$.

Then $\dot \phi(0)=\left.\frac{d}{dt}\right\arrowvert_{t=0}\phi(t)$ is again an element in the vector space $\got g^*\otimes \got h$ and for all $x,y\in \got g$
\begin{equation*}
\begin{split}
 \delta_{\phi_0}\left(\dot \phi(0)\right)(x,y)&=\left[\phi_0(x), \dot \phi(0)(y)\right]-\left[\phi_0(y), \dot \phi(0)(x)\right]- \dot \phi(0)([x,y])\\
 &=\left[\phi(0)(x), \dot \phi(0)(y)\right]+\left[\dot \phi(0)(x),\phi(0)(y)\right]- \dot \phi(0)([x,y])\\
 &=\left.\frac{d}{dt}\right\arrowvert_{t=0}\left(\left[\phi(t)(x), \phi(t)(y)\right]-\phi(t)([x,y])\right)=\left.\frac{d}{dt}\right\arrowvert_{t=0}0=0.
\end{split}
\end{equation*}
That is, $\dot \phi(0)\in Z^1_{\phi_0}(\got g, \got h)$.
\end{rem}

	\bigskip

	The following Voronov-dataset due to \cite{Zambon-Fregier-SimultaneousDefOfAlgebraAndMorphisms-15'} is used for constructing the controlling L$_\infty[1]$-algebra of small deformations of a Lie subalgebra $\got{h}\subset\got{g}$. 
	Let ${\got h^c}$ be as before a linear complement of $\got{h}$ in $\got{g}$. Denote by $p_\got{h}\colon \got{g}\to\got{h}$ and $p_{\got h^c}\colon \got{g}\to {\got h^c}$ the projections associated to this choice of complement ${\got h^c}$. The inclusion of $\got h$ in $\got g$ is written $\iota\colon \got h\hookrightarrow \got g$.
	
	\begin{lem}
		In the situation above, the following quadruple defines a Voronov-dataset:
		\begin{itemize}
			\item $L\eqdef C^\bullet(\got{g};\got{g})[1]=(\wedge^{\bullet}\got{g}^*\otimes\got{g})[1]$  with the graded Lie algebra bracket given as in \eqref{eq_bracket_explicit}:
			\[ \llbra \phi\otimes v, \psi\otimes w\rrbra =(-1)^{(r-1)\cdot(s-1)}\psi\wedge\iota_w\phi\otimes v- \phi\wedge\iota_v\psi\otimes w
			\]
			on elementary tensors $\phi\otimes v\in \wedge^r\mathfrak g^*\otimes \mathfrak g$ and $\psi\otimes w\in \wedge^s\mathfrak g^*\otimes \mathfrak g$ 
			(of degrees $r-1$, and $s-1$, respectively).
		
			\item $\got{a}\eqdef C^\bullet(\got{h}; {\got h^c})[1]=\wedge^{\bullet+1}\got{h}^*\otimes {\got h^c}$, with the inclusion $I\colon \got{a}\to L$ given by $I(\psi\otimes v)=p_{\got{h}}^*\psi\otimes v$.
			\item $P\colon L\to\got{a}$ the projection given by: $\phi\otimes x\mto \iota^*\phi\otimes p_{\got h^c}(x)$ with kernel given by $$\ker P=\bigleftpar\wedge^{\bullet+1}\got{g}^*\otimes\got{h}\bigrightpar\oplus\bigleftpar\oplus_{r+s=\bullet+1}^{s\geq 1}\wedge^r\got{h}^*\wedge\wedge^s ({\got h^c})^*\otimes {\got h^c}\bigrightpar$$
			Here, $\got h^*$ is identified with the annihilator $({\got h^c})^\circ\subseteq \got g^*$ and $({\got h^c})^*$ is identified with $\got h^\circ$ via the splitting $\got g=\got h\oplus {\got h^c}$.
			\item $\Theta=\mu$ is the Lie bracket on $\mathfrak g$, which, as an element of $\wedge^2\mathfrak g^*\otimes \mathfrak g$, has degree $1$, and satisfies $\llbra \mu, \mu\rrbra=0$ by \eqref{Lie_Brackets_mu}.
		\end{itemize}
	\end{lem}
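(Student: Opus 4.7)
The plan is to verify each of the four axioms in the definition of a Voronov-dataset, most of which are direct consequences of the grading and bracket conventions already set up in Section \ref{sec_LA_MC}. The only genuinely non-trivial checks are that $\got{a}$ is \emph{abelian} inside $L$ and that $\ker P$ is closed under the graded bracket $\llbra \cdot, \cdot\rrbra$, both of which reduce to bookkeeping with the splitting $\got g = \got h\oplus \got h^c$ and the induced bigrading on $\wedge^\bullet\got g^*$.

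For the first item, the graded Lie algebra structure on $L = C^\bullet(\got g;\got g)[1]$ is exactly the one recalled in Section \ref{sec_LA_MC}, so nothing new needs to be proved there. For item (iv), the degree of $\mu\in\wedge^2\got g^*\otimes \got g$ is $2-1=1$ after the shift, the equation $\llbra \mu,\mu\rrbra = 0$ was recorded in \eqref{Lie_Brackets_mu}, and the fact that $\mu \in\ker P$ follows because $\got h$ is a Lie subalgebra: any component of $\mu$ supported on $\wedge^2\got h^*$ (i.e.\ with both inputs in $\got h$) takes values in $\got h$, hence is annihilated by $p_{\got h^c}$, so $P(\mu)=\iota^*\mu\otimes p_{\got h^c}(\mu|_{\got h\times \got h})=0$.

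For the abelian property of $\got{a}$, I would evaluate $\llbra I(\phi\otimes v), I(\psi\otimes w)\rrbra$ on elementary tensors with $\phi\in\wedge^r\got h^*$, $\psi\in\wedge^s\got h^*$ and $v,w\in\got h^c$, using the explicit formula \eqref{eq_bracket_explicit}. The result is a sum of two terms each involving interior products $\iota_w(p_{\got h}^*\phi)$ or $\iota_v(p_{\got h}^*\psi)$; since $p_{\got h}(w) = p_{\got h}(v) = 0$, both contractions vanish, so $\llbra I(\got{a}), I(\got{a})\rrbra = 0$. The projection identity $P\circ I=\id_{\got{a}}$ is the one-line computation $\iota^*\circ p_{\got h}^* = (p_{\got h}\circ \iota)^* = \id_{\got h^*}^*$, and gives the stated description of $\ker P$ via the direct sum decomposition $\wedge^{\bullet+1}\got g^* = \bigoplus_{r+s=\bullet+1}\wedge^r\got h^*\wedge \wedge^s(\got h^c)^*$ combined with $\got g=\got h\oplus \got h^c$ on the coefficient side.

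The main step — and the one I expect to require the most care — is verifying that $\ker P$ is a graded Lie subalgebra. I would proceed by a case analysis on where the values of two elementary tensors $\phi\otimes v,\ \psi\otimes w\in \ker P$ sit: either $v\in\got h$, or $v\in\got h^c$ and $\iota^*\phi=0$ (symmetrically for $w$). In the case where both values lie in $\got h$ both resulting terms of $\llbra \phi\otimes v,\psi\otimes w\rrbra$ still take values in $\got h$, hence are in $\wedge^\bullet\got g^*\otimes\got h\subseteq\ker P$. In the remaining cases, the key observation is that for $v\in\got h$ the contraction $\iota_v$ annihilates the $(\got h^c)^*$ factors and therefore preserves the property ``contains at least one $(\got h^c)^*$ factor''; hence wedging with $\psi$ (resp.\ $\phi$) keeps that factor, and together with $w\in\got h^c$ (resp.\ $v\in\got h^c$) places the term in the second summand of the stated decomposition of $\ker P$. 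The symmetric cases are analogous, and this completes the verification.
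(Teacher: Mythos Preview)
Your proposal is correct and follows essentially the same approach as the paper: both verify the abelian property of $\got a$ via the vanishing of $\iota_v(p_{\got h}^*\psi)$ for $v\in\got h^c$, both check $\ker P$ is a subalgebra by the same case analysis on the two summands $\wedge^\bullet\got g^*\otimes\got h$ and $(\text{forms with a }(\got h^c)^*\text{ factor})\otimes\got h^c$, and both observe that $\Theta=\mu\in\ker P$ precisely because $\got h$ is a Lie subalgebra. The paper writes out each of the three bracket cases explicitly, whereas your case analysis is more compressed (and your phrase ``$\iota_v$ annihilates the $(\got h^c)^*$ factors'' could be misread), but the substance is the same.
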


	\begin{proof}
	
		First check that $\got{a}$ is an abelian Lie subalgebra of $L$. 	For $\psi_1,\psi_2\in\wedge^{\bullet+1}\mathfrak h^* $ and $v_1,v_2\in {\got h^c}$	\begin{align*}
			\llbracket p_{\got{h}}^*\psi_1\otimes v_1, p_{\got{h}}^*\psi_2\otimes v_2\rrbracket&=(-1)^{|\psi_1||\psi_2|}p_{\got{h}}^*\psi_2\wedge\iota_{v_2}(p_{\got{h}}^*\psi_1)\otimes v_1-p_{\got{h}}^*\psi_1\wedge\iota_{v_1}(p_{\got{h}}^*\psi_2)\otimes v_2=0.
		\end{align*}
Next verify that $\ker P$ is a Lie subalgebra of $L$.  The following computations show that the bracket of any two elements in $\ker P$ lies again in $\ker P$.
For $\phi_1,\phi_2\in\wedge^{\bullet+1}\mathfrak g^*$ and $u_1,u_2\in\mathfrak h$,
		\begin{equation*}
			\llbra\phi_1\otimes u_1,\phi_2\otimes u_2\rrbra=(-1)^{|\phi_1||\phi_2|}\phi_2\wedge\iota_{u_2}\phi_1\otimes u_1-\phi_1\wedge\iota_{u_1}{\phi_2}\otimes u_2\\
			\end{equation*}
			is again an element of $\wedge^{\bullet+1}\mathfrak g^*\otimes\mathfrak h$. For $\psi_1,\psi_2\in\wedge^{\bullet+1}\mathfrak h^*$, $\eta_1,\eta_2\in\wedge^{\bullet+1}({\got h^c})^*$ and $v_1,v_2\in {\got h^c}$
			\begin{equation*}
			\begin{split}
			&\llbracket p_{\got{h}}^*\psi_1\wedge p_{\got h^c}^*\eta_1\otimes v_1,p_{\got{h}}^*\psi_2\wedge p_{\got h^c}^*\eta_2\otimes v_2\rrbra
			\\
			=\,&(-1)^{\epsilon}(p_{\got{h}}^*\psi_2\wedge p_{\got h^c}^*\eta_2)\wedge\iota_{v_2}(p_{\got{h}}^*\psi_1\wedge p_{\got h^c}^*\eta_1)\otimes v_1-(p_{\got{h}}^*\psi_1\wedge p_{\got h^c}^*\eta_1)\wedge\iota_{v_1}(p_{\got{h}}^*\psi_2\wedge p_{\got h^c}^* \eta_2)\otimes v_2
			\end{split}
			\end{equation*}
			with $\epsilon:=(|\psi_1|+|\eta_1|+1)\cdot(|\psi_2|+|\eta_2|+1)$.
			The first term is, up to a sign, $(p_{\got{h}}^*\psi_2\wedge p_{\got h^c}^*\eta_2)\wedge(p_{\got{h}}^*\psi_1\wedge \iota_{v_2}p_{\got h^c}^*\eta_1)\otimes v_1$, which is again an element of $\oplus_{r+s=\bullet+1}^{s\geq 1}\wedge^r\got{h}^*\otimes\wedge^s ({\got h^c})^*\otimes {\got h^c}$. In the same manner, the second summand is up to a sign 
	$(p_{\got{h}}^*\psi_1\wedge p_{\got h^c}^*\eta_1)\wedge(p_{\got{h}}^*\psi_2\wedge \iota_{v_1}p_{\got h^c}^*\eta_2)\otimes v_2$ and so also an element of $\oplus_{r+s=\bullet+1}^{s\geq 1}\wedge^r\got{h}^*\otimes\wedge^s ({\got h^c})^*\otimes {\got h^c}$.
			
			For $\phi\in\wedge^{\bullet+1}\mathfrak g^*$, $u\in\mathfrak h$, $\psi\in\wedge^{\bullet+1}\mathfrak h^*$, $\eta\in\wedge^{\bullet+1}({\got h^c})^*$ and $v\in {\got h^c}$
			\begin{equation*}
			\begin{split}
			\llbra \phi\otimes u,p_{\got{h}}^*\psi\wedge p_{{\got h^c}}^*\eta\otimes v\rrbra&=(-1)^{(|\psi|+|\eta|+1)\cdot|\phi|}(p_{\got{h}}^*\psi\wedge p_{\got h^c}^*\eta)\wedge\iota_v\phi\otimes u -\phi\wedge\iota_u(p_{\got{h}}^*\psi\wedge p_{\got h^c}^*\eta)\otimes v
			\end{split}
		\end{equation*}
		The first term is an element of $\wedge^{\bullet+1}\mathfrak g^*\otimes\mathfrak h$, while the second term equals, up to a sign,
		$\phi\wedge(p_{\got{h}}^*(\iota_u\psi)\wedge p_{\got h^c}^*\eta)\otimes v$ and is so an element of $\oplus_{r+s=\bullet+1}^{s\geq 1}\wedge^r\got{h}^*\otimes\wedge^s ({\got h^c})^*\otimes {\got h^c}$.

		Finally note that $P(\mu)(u_1,u_2)=p_{\got h^c}[u_1,u_2]$ for all $u_1,u_2\in\mathfrak h$. Therefore, $\Theta=\mu\in\ker P$ if and only if $\got{h}\subset\got{g}$ is a Lie subalgebra. As seen before, $\llbra \mu,\mu\rrbra=0$ since $\mu$ is the Lie bracket on $\mathfrak g$.
	\end{proof}
	
	The multibrackets of the $L_\infty[1]$-algebra on $\got{a}$ induced by the above Voronov-dataset, are given by
	\begin{equation*}
		m_l(a_1, \ldots, a_l)=P\llbra\llbra\ldots\llbra\llbra\mu, I(a_1)\rrbra, I(a_2)\rrbra,\ldots\rrbra, I(a_l)\rrbra
	\end{equation*}
	for $l\geq 1$ and $a_1,\ldots, a_l\in\mathfrak a$.
Choose  $\xi\in \wedge^r\got{h}\otimes {\got h^c}$ (i.e.~of degree $r-1$). Then 
	\begin{equation*}
	\begin{split}
	m_1(\xi)&=P\llbra\mu,I(\xi)\rrbra= P\left(\delta_{\ad}(I(\xi))\right).
	\end{split}
	\end{equation*}
	For $h_1,\ldots, h_{r+1}\in\got{h}$ compute
	\begin{equation*}
	\begin{split}
	P\left(\delta_{\ad}(I(\xi))\right)(h_1,\ldots, h_{r+1})&=p_{\got h^c}\left(\delta_{\ad}(I(\xi))(h_1,\ldots, h_{r+1})\right)\\
	&=\sum_{i=1}^{r+1}(-1)^{i+1}p_{\got h^c}\left[h_i, \xi(h_1, \ldots, \widehat{i}, \ldots, h_{r+1})\right]\\
	&\quad \quad +\sum_{1\leq i<j\leq r}(-1)^{i+j}\xi\left(\left[h_i, h_j\right], h_1, \ldots, \widehat{i}, \ldots, \widehat{j}, \ldots, h_{r+1}\right)\\
	&=(\delta_{\got{h}}\xi)(h_1,\ldots, h_{r+1}).
	\end{split}
	\end{equation*}
	This shows that  $m_1=\delta_{\got{h}}$, modulo the identification ${\got h^c}\simeq \got{g}/\got{h}$.

	Now, let $\xi\in C^{r}(\got{h},{\got h^c})=\wedge^{r}\got{h}\otimes {\got h^c}$ and $\eta\in C^{s}(\got{h},{\got h^c})$ (of degrees $r-1$ and $s-1$, respectively).  
	Then 
	\begin{equation*}
	\begin{split}
	m_2(\xi,\eta)&= P\llbra \llbra \mu, I(\xi)\rrbra, I(\eta)\rrbra= P\llbra \delta_{\rm ad}(I(\xi)), I(\eta)\rrbra \in C^{r+s}(\got{h}, {\got h^c})
	\end{split}
	\end{equation*}
	Assume that $r=s=1$ and choose $h_1, h_2\in\got{h}$. Then by \eqref{eq_bracket_explicit}
	\begin{equation}\label{compute_m_2}
	\begin{split}
	m_2(\xi,\eta)(h_1, h_2)& = p_{\got h^c}\bigl(\delta_{\rm ad}(\xi)\left(\eta(h_{1}), h_{2}\right)-\delta_{\rm ad}(\xi)\left(\eta(h_{2}), h_{1}\right)-\eta\left(\delta_{\rm ad}\xi(h_{1}, h_{2})\right)\bigr)\\
	&=p_{\got h^c}\bigl([\eta(h_1),\xi(h_2)]-[h_2, \cancel{\xi(\eta(h_1))}]-\xi[\eta(h_1),h_2]\\
	&\qquad -[\eta(h_2),\xi(h_1)]+[h_1, \cancel{\xi(\eta(h_2))}]+\xi[\eta(h_2),h_1]\\
	&\qquad -\eta\left([h_1,\xi(h_2)]-[h_2,\xi(h_1)]\right)+\cancel{\eta\left(\xi[h_1,h_2]\right)}
	\bigr).
	\end{split}
	\end{equation}
In particular 
\begin{equation}\label{compute_m_2_b}
	\begin{split}
	m_2(\xi,\xi)(h_1, h_2)=2p_{\got h^c}[\xi(h_1),\xi(h_2)]-2\xi[\xi(h_1),h_2]+2\xi[\xi(h_2),h_1].
	\end{split}
\end{equation}

	Last,  the trinary bracket is worked out. Let $\xi,\eta,\nu\in C^{\bullet}(\got{h};{\got h^c})[1]$.
	Then 
	\begin{equation*}
	\begin{split}
	m_3(\xi,\eta,\nu)=P\llbra\llbra\llbra \mu, I(\xi)\rrbra, I(\eta)\rrbra, I(\nu)\rrbra=P\llbra\llbra \delta_{\rm ad}(I(\xi)), I(\eta)\rrbra, I(\nu)\rrbra.
	\end{split}
	\end{equation*}
	Assume that $|\xi|=|\eta|=|\nu|=0$ and compute for $h_1,h_2\in\got{h}$
	\begin{equation}\label{m_3_subalgebra}
	\begin{split}
	&m_3(\xi,\eta,\nu)(h_1,h_2)=p_{\got h^c}\left(\llbra\llbra \delta_{\rm ad}(I(\xi)), I(\eta)\rrbra, I(\nu)\rrbra(h_1,h_2)\right)\\
	&\,=p_{\got h^c}\left(\llbra \delta_{\rm ad}(I(\xi)), I(\eta)\rrbra(\nu(h_1), h_2)-\llbra \delta_{\rm ad}(I(\xi)), I(\eta)\rrbra(\nu(h_2), h_1)
	-\nu\left(\llbra \delta_{\rm ad}(I(\xi)), I(\eta)\rrbra(h_1,h_2)\right)\right).
	\end{split}
	\end{equation}
	Easy computations give that this is 
	\begin{equation*}
	\begin{split}
	&p_{\got h^c}\left(\llbra \delta_{\rm ad}(I(\xi)), I(\eta)\rrbra(\nu(h_1), h_2)\right)-\llbra \delta_{\rm ad}(I(\xi)), I(\eta)\rrbra(\nu(h_2), h_1)
	-\nu\left(\llbra \delta_{\rm ad}(I(\xi)), I(\eta)\rrbra(h_1,h_2)\right)\\
	&\, =-\xi[\nu(h_1),\eta(h_2)]-\eta[\nu(h_1),\xi(h_2)]-\xi[\eta(h_1),\nu(h_2)]-\eta[\xi(h_1),\nu(h_2)]
	-\nu\left([\xi(h_1), \eta(h_2)]+[\eta(h_1),\xi(h_2)]\right).
	\end{split}
	\end{equation*}
	In particular, 
	\[m_3(\xi,\xi,\xi)(h_1,h_2)=-6\xi\left([\xi(h_1), \xi(h_2)]\right)
	\]
	for all $h_1,h_2\in\mathfrak h$.
	
	It is easy to see or interpolate from these computations that $m_k$ vanishes for all $k\geq 4$ because $\mu=[\cdot\,,\cdot]$ has only two entries.
	In particular for all $k\geq 4$ the multibracket $m_k$ vanishes on degree $0$ elements.

	\begin{prop}[\cite{Richardson-Deformations-of-subalgebras-69'}, \cite{Zambon-Fregier-SimultaneousDefOfAlgebraAndMorphisms-15'}]
		Given a $k$-dimensional Lie subalgebra $\got{h}\subset\got{g}$ and a complement ${\got h^c}$ for $\got h$ in $\got g$, there is a bijection between
		\begin{enumerate}
			\item Maurer-Cartan elements $\phi$ (of  degree $0$) of the $L_\infty[1]$-algebra $(C^\bullet(\got{h}; {\got h^c})[1],\{m_i\}_{i=1}^3)$, and
			\item $k$-dimensional Lie subalgebras $\got{h}'\subset\got{g}$ such that $\got{g}=\got{h}'\oplus {\got h^c}$,
		\end{enumerate}
		given by the correspondence $\phi\mto\grap(\phi)$.
	\end{prop}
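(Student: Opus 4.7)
The plan is to verify directly that under the Grassmannian chart $\Psi\colon \Hom(\got h,\got h^c)\to \Gr_k(\got g)$, $\phi\mapsto \grap(\phi)$, which (as recalled in the excerpt) is a bijection onto $U_{\got h,\got h^c}=\{W\subseteq \got g\mid W\oplus \got h^c=\got g\}$, the Maurer-Cartan condition on a degree $0$ element of $C^\bullet(\got h;\got h^c)[1]$ is equivalent to the graph being a Lie subalgebra. Since degree $0$ elements of $C^\bullet(\got h;\got h^c)[1]$ are exactly the elements of $C^1(\got h;\got h^c)=\Hom(\got h,\got h^c)$, and $\Psi$ is already a bijection of sets onto the subspaces listed in (2), only this equivalence of conditions has to be established.

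First I would translate the subalgebra condition on $\grap(\phi)=\{h+\phi(h)\mid h\in\got h\}$ into the statement that for all $h_1,h_2\in\got h$, the $\got h^c$-component of $[h_1+\phi(h_1),h_2+\phi(h_2)]$ equals $\phi$ applied to its $\got h$-component. Using that $\got h$ is itself a subalgebra (so $p_{\got h^c}[h_1,h_2]=0$ and $p_{\got h}[h_1,h_2]=[h_1,h_2]$) and expanding by bilinearity along $\got g=\got h\oplus \got h^c$, this becomes the condition $F(\phi)=0$, where
\begin{align*}
F(\phi)(h_1,h_2)
&=p_{\got h^c}[h_1,\phi(h_2)]+p_{\got h^c}[\phi(h_1),h_2]+p_{\got h^c}[\phi(h_1),\phi(h_2)]-\phi([h_1,h_2])\\
&\quad -\phi\bigl(p_{\got h}[h_1,\phi(h_2)]\bigr)-\phi\bigl(p_{\got h}[\phi(h_1),h_2]\bigr)-\phi\bigl(p_{\got h}[\phi(h_1),\phi(h_2)]\bigr).
\end{align*}
Here, consistently with the conventions used implicitly in the excerpt's derivations of $m_2$ and $m_3$, $\phi$ applied to an element of $\got g$ is read as $\phi\circ p_{\got h}$.

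Next I would compare $F(\phi)$ term-by-term with the Maurer-Cartan series $m_1(\phi)+\tfrac12 m_2(\phi,\phi)+\tfrac16 m_3(\phi,\phi,\phi)$, which is finite since $m_k$ vanishes for $k\geq 4$ as already noted. The three ``linear-in-$\phi$'' contributions plus $-\phi([h_1,h_2])$ reproduce $m_1(\phi)$ via the identification $m_1=\delta^{\rm Bott}_{\got h}$ derived in the excerpt; the mixed terms involving one composition match $\tfrac12 m_2(\phi,\phi)$ by equation \eqref{compute_m_2_b}; and the cubic summand $-\phi\bigl(p_{\got h}[\phi(h_1),\phi(h_2)]\bigr)$ matches $\tfrac16 m_3(\phi,\phi,\phi)$ by the formula $m_3(\phi,\phi,\phi)(h_1,h_2)=-6\phi([\phi(h_1),\phi(h_2)])$ stated just after \eqref{m_3_subalgebra}. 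Hence
\[
F(\phi)=m_1(\phi)+\tfrac12 m_2(\phi,\phi)+\tfrac16 m_3(\phi,\phi,\phi),
\]
so the Maurer-Cartan equation is equivalent to $\grap(\phi)$ being a subalgebra of $\got g$, which combined with the bijectivity of $\Psi$ onto $U_{\got h,\got h^c}$ yields the proposition.

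The main difficulty is bookkeeping rather than conceptual: one must carefully track the implicit projections $p_{\got h}$ and $p_{\got h^c}$, the antisymmetrization in $m_2$ and $m_3$, and the factorials $1/k!$ arising from evaluating the graded-symmetric multibrackets on the same degree $0$ argument $\phi$. Once these conventions are pinned down, the match of $F(\phi)$ with the Maurer-Cartan expression is a routine verification using only the formulas already displayed in the excerpt.
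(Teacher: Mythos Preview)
Your proposal is correct and follows essentially the same approach as the paper: both expand $[h_1+\phi(h_1),h_2+\phi(h_2)]$, project onto $\got h$ and $\got h^c$ to rewrite the subalgebra condition, and then match the resulting expression against $m_1(\phi)+\tfrac12 m_2(\phi,\phi)+\tfrac16 m_3(\phi,\phi,\phi)$ using the explicit formulas already computed for $m_1$, $m_2$, $m_3$. The only difference is organizational---you package the subalgebra condition as a single map $F(\phi)$ and cite the earlier displayed formulas, whereas the paper writes out both sides in full and compares directly.
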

	\begin{proof}
	Consider $\phi\in \wedge^1\got h^*\otimes {\got h^c}$. Then 
		$\grap(\phi)=\{u+\phi(u) \ | \ u\in\got{h}\}$ is a Lie subalgebra of $\got g$ if and only if $[\grap(\phi),\grap(\phi)]\subset\grap(\phi)$. Equivalently, this means that for every $u,\tilde{u}\in\got{h}$:
		\begin{align*}
			[u+\phi(u),\tilde{u}+\phi(\tilde{u})]
			&=[u,\tilde{u}]+[u,\phi(\tilde{u})]+[\phi(u),\tilde{u}]+[\phi(u),\phi(\tilde{u})]&\in\grap(\phi).
		\end{align*}
		This is equivalent to
		\begin{align*}
			p_{{\got h^c}}\left([u,\phi(\tilde{u})]+[\phi(u),\tilde{u}]+[\phi(u),\phi(\tilde{u})]\right)&=\phi\Bigl([u,\tilde{u}]+p_{\got{h}}\left([u,\phi(\tilde{u})]+[\phi(u),\tilde{u}]+[\phi(u),\phi(\tilde{u})]\right)\Bigr).
		\end{align*}
		On the other hand consider
				\begin{align*}
			m_1(\phi)+\frac{1}{2}m_2(\phi,\phi)+\frac{1}{6}m_3(\phi,\phi,\phi).
		\end{align*}
		On  $u,\tilde{u}\in\got{h}$ this is 
		\begin{align*}
			&p_{\got h^c}([u,\phi(\tilde{u})])-p_{\got h^c}([\tilde{u},\phi(u)])-\phi([u,\tilde{u}])+p_{\got h^c}([\phi(u),\phi(\tilde{u})])-\phi(p_{\got{h}}([\phi(u),\tilde{u}]))\\
			&+\phi(p_{\got{h}}([\phi(\tilde{u}),u]))-\phi(p_{\got{h}}([\phi(u),\phi(\tilde{u})])),
		\end{align*}
		which concludes the proof.
	\end{proof}
	
	Denote by $\mathbf{\Lambda}\subset\Hom(\wedge^2\got{g},\got{g})$ the space of Lie brackets on the vector space $\got{g}$ and by $\mathbf{S}\subset\Gr_{k}(\got{g})$ the space of $k$-dimensional Lie subalgebras of the Lie algebra $(\got{g},\mu)$.

	\begin{defin}\label{defin of rigidity of Lie subalgebras}
		A Lie subalgebra $\got{h}\subset\got{g}$ is called \textbf{a rigid subalgebra} if 
		for each open subset $V\subseteq G$ containing $e\in G$ there exists an open subset $U^V_{\got{h}}\subseteq\Gr_{k}(\got{g})$ of $\got{h}$ 		such that for every Lie subalgebra $\got{h}'\subset\got{g}$ with $\got{h}'\in U^V_{\got{h}}$, there exists $g\in V$ such that
			the equality $\got{h}'=\Ad_{g}\got{h}$ holds.
	\end{defin}

	\begin{rem}
	Note that if $\got h \subseteq \got g$ is rigid, then for each $g\in G$, the Lie subalgebra $\Ad_g\got h\subseteq \got g$ is rigid as well: for $V\subseteq G$ open around $e$ 
	take the neighborhood \[U^V_{\Ad_g\got h}:=\Ad_g\left(U^{g^{-1}Vg}_{\got h}\right)\subseteq \Gr_{k}(\got{g})\] of $\Ad_g\got h$. Then for all $\got h'\in U^V_{\Ad_g\got h}$ there exists $\got h''\in U^{g^{-1}Vg}_{\got h}$ such that $\got h'=\Ad_{g}\got h''$   and so there exists $g'\in g^{-1}Vg$ such that 
	\[ \got h'=\Ad_{g}\got h''=\Ad_g\Ad_{g'}\got h=\Ad_{gg'g^{-1}}\Ad_g\got h,
	\] 
	with $gg'g^{-1}\in V$.
	In particular, if a Lie subalgebra $\got{h}\subset\got{g}$ is \textbf{rigid}, then 
	its orbit $\cali{O}_{\got{h}}$ is open  in $\mathbf{S}$:
	\[ \cali{O}_{\got h}:=\{\Ad_g\got h\mid g\in G\}\subseteq \cup_{g\in G}\left(U^G_{\Ad_g\got h}\cap \mathbf S\right)\subseteq \cali{O}_{\got h}
	\]
	shows that 
	\[ \cali{O}_{\got h}=\cup_{g\in G}\left(U^G_{\Ad_g\got h}\cap \mathbf S\right),
	\]
	which is an open subset of $S$.
	
	\end{rem}
	
	\begin{thm}[\cite{Crainic-Ivan-Schaetz}]\label{rigidity theorem for Lie subalgebras}
		Let $\got{h}\subset\got{g}$ be a Lie subalgebra. If $H^1_{\rm def}(\got{h}\subset\got{g})=0$, then $\got{h}$ is a rigid subalgebra.
	\end{thm}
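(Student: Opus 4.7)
The plan is to combine a local slice argument with a Maurer-Cartan limit calculation to show that every Lie subalgebra near $\got h$ must already lie in the $\Ad$-orbit of $\got h$. First I would consider the orbit map $\sigma\colon G\to\Gr_k(\got g)$, $g\mapsto\Ad_g\got h$, whose differential at $e$ is $d\sigma_e(x)=\delta^{\rm Bott}_{\got h}(\bar x)$; its image is $B^1_{\rm Bott}(\got h\subset\got g)$, its kernel is the normalizer $N_{\got g}(\got h)$, and the orbit $\cali O_{\got h}$ is locally a submanifold of dimension $\dim\got g-\dim N_{\got g}(\got h)=\dim B^1$. Using the hypothesis $H^1_{\rm def}(\got h\subset\got g)=0$, i.e.~$B^1=Z^1$, I would fix a linear complement $\got c$ of $N_{\got g}(\got h)$ in $\got g$ and a linear subspace $T\subseteq\Hom(\got h,\got h^c)$ transverse to $Z^1$, for some complement $\got h^c$ of $\got h$ in $\got g$, and take $W\eqdef\Psi(T\cap U)$ for a small neighborhood $U$ of $0$; then $W$ is a local transversal to $\cali O_{\got h}$ at $\got h$.

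The map $\Phi\colon\got c\times W\to\Gr_k(\got g)$, $(x,w)\mapsto\Ad_{\exp(x)}w$, is a local diffeomorphism at $(0,\got h)$: its differential there is $(\dot x,\dot w)\mapsto\delta^{\rm Bott}_{\got h}(\bar{\dot x})+\dot w$, which is injective by the transversality $B^1\cap T_{\got h}W=0$ together with $\got c\cap N_{\got g}(\got h)=0$, and is a linear isomorphism by the dimension count $\dim\got c+\dim W=\dim\Gr_k(\got g)$. The crucial step is now the slice statement: after possibly shrinking $W$, the only Lie subalgebra of $\got g$ lying in $W$ is $\got h$ itself. Suppose for contradiction that there is a sequence of Lie subalgebras $w_n\in W$ with $w_n\to\got h$ and $w_n\neq\got h$. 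Writing $w_n=\grap(\phi_n)$ with $\phi_n\in T$ and $\phi_n\to 0$, the Maurer-Cartan equation gives $m_1(\phi_n)=-\tfrac12 m_2(\phi_n,\phi_n)-\tfrac16 m_3(\phi_n,\phi_n,\phi_n)=O(\|\phi_n\|^2)$. Passing to a subsequence and normalising, $v_n\eqdef\phi_n/\|\phi_n\|\to v$ with $\|v\|=1$; dividing the previous identity by $\|\phi_n\|$ and taking the limit yields $m_1(v)=0$, so $v\in Z^1$ via $m_1=\delta^{\rm Bott}_{\got h}$ modulo the identification $\got h^c\simeq\got g/\got h$. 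But $v\in T$ since $T$ is closed and each $\phi_n\in T$, whence $v\in T\cap Z^1=\{0\}$, contradicting $\|v\|=1$.

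To conclude the exact form of Definition \ref{defin of rigidity of Lie subalgebras}, given any open neighborhood $V$ of $e\in G$, I would shrink the $\got c$-part of $\Phi$'s domain so that its $\exp$-image lies in $V$, obtaining a corresponding open neighborhood $U^V_{\got h}\subseteq\Gr_k(\got g)$ of $\got h$. Every Lie subalgebra $\got h'\in U^V_{\got h}$ then decomposes uniquely as $\Ad_{\exp(x)}w$ with $\exp(x)\in V$ and $w\in W$, and since $w=\Ad_{\exp(-x)}\got h'$ is again a Lie subalgebra lying in $W$, the slice statement forces $w=\got h$; hence $\got h'=\Ad_{\exp(x)}\got h$ with $\exp(x)\in V$ as required. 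The main obstacle is the slice statement in the second paragraph: it is there that the cohomological hypothesis $H^1=0$ is converted into a geometric rigidity statement, via the key observation that the linear part of the Maurer-Cartan equation at $\got h$ is precisely the Bott differential, so that any limit of secants of nearby Lie subalgebras is forced into $Z^1$ — exactly the subspace that $T_{\got h}W$ was chosen to avoid.
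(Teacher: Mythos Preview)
Your proof is correct. The paper itself does not prove this theorem --- it is only quoted from \cite{Crainic-Ivan-Schaetz} --- but the method the paper adopts for the analogous ideal result (Theorem~\ref{theorem for Aut(g)-rigidity of ideals}) is genuinely different from yours. There the argument is packaged abstractly via Proposition~\ref{proposition of openess of orbits}: one builds a $G$-vector bundle $E\to\Gr_k(\got g)$ whose equivariant section $\sigma$ has zero set equal to the space of subalgebras, and checks that the sequence $\got g\to T_{\got h}\Gr_k(\got g)\to E_{\got h}$ (orbit map followed by $T^{\ver}_{\got h}\sigma$) is exact, which is precisely the condition $H^1_{\rm def}(\got h\subset\got g)=0$; the conclusion then follows from the implicit function theorem hidden inside Proposition~\ref{proposition of openess of orbits}.

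Your route is more elementary and self-contained: you build the slice $W$ by hand, prove the local product decomposition $\Phi$ directly, and replace the implicit function theorem by a compactness/secant-limit argument on the Maurer--Cartan equation. What you gain is transparency --- the reader sees explicitly where $H^1=0$ is used, namely to force the normalised limit $v$ into $T\cap Z^1=\{0\}$. What the paper's framework buys is reusability: once Proposition~\ref{proposition of openess of orbits} is in place, rigidity of subalgebras, of ideals, and of other structures all reduce to identifying the relevant sequence and checking one cohomology group vanishes, without repeating the slice construction each time.
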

	
	A Lie subalgebra $\got{h}\subset(\got{g},\mu)$ is \emph{stable} if all nearby Lie algebra structures on $\got{g}$ have a nearby Lie subalgebra isomorphic to $\got{h}$.
	
	\begin{defin}\label{defin of stability of Lie subalgebras}
		A Lie subalgebra $\got{h}\subset(\got{g},\mu)$ is called \textbf{a stable subalgebra} if for every neighborhood $U_{\got{h}}\subset\Gr_{k}(\got{g})$ of $\got{h}$, there exists a neighborhood $V_{\mu}\subseteq\Lambda$ of $\mu$ such that for every $\mu'\in V_{\mu}$ there exists $\got{h}'\in U_{\got{h}}$ such that $\got{h}'\subset(\got{g},\mu')$ is a Lie subalgebra.
	\end{defin}

	\begin{thm}[\cite{Crainic-Ivan-Schaetz}]\label{stability theorem for Lie subalgebras}
		Let $\got{h}\subset\got{g}$ be a Lie subalgebra. If $H^2_{\rm def}(\got{h}\subset\got{g})=0$, then $\got{h}$ is a stable subalgebra. In addition, the space of $k$-dimensional Lie subalgebras of $\got{g}$ is then locally a manifold, around $\got{h}$, of dimension equal to the dimension of $Z^1_{\rm def}(\got{h}\subset\got{g})$.
	\end{thm}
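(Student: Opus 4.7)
The plan is to apply an implicit function theorem / Kuranishi argument to the map encoding the subalgebra condition, exploiting the simultaneous-deformation $L_\infty[1]$-algebra of Theorem \ref{theorem Voronov for L-infty algebra on the mapping cone}, with the hypothesis $H^2_{\rm def}(\got h \subset \got g) = 0$ killing the relevant obstructions.

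Fix a linear complement $\got h^c$ of $\got h$ in $\got g$ so that nearby $k$-dimensional subspaces are graphs $\grap(\phi)$ of $\phi \in \Hom(\got h, \got h^c)$. Consider the smooth map
\[
F\colon \Hom(\got h, \got h^c) \times \Lambda \longrightarrow \wedge^2 \got h^* \otimes \got h^c \simeq C^2_{\rm def}(\got h \subset \got g)
\]
defined by
\[
F(\phi, \mu')(x, y) = p_{\got h^c}\bigl(\mu'(x + \phi(x), y + \phi(y))\bigr) - \phi\bigl(p_{\got h}\mu'(x + \phi(x), y + \phi(y))\bigr)
\]
for $x, y \in \got h$. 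Its zero set consists of pairs $(\phi, \mu')$ for which $\grap(\phi)$ is a subalgebra of $(\got g, \mu')$; moreover $F(0, \mu) = 0$ and a direct computation using $\mu(\got h, \got h) \subseteq \got h$ yields $\partial_\phi F(0, \mu) = \delta_{\got h}^{\rm Bott}$. Up to the chart identification, $F$ coincides with the $\got a$-component of the MC equation of the simultaneous-deformation $L_\infty[1]$-algebra on $L[1] \oplus \got a$ with $L = C^\bullet(\got g; \got g)[1]$ and $\got a = C^\bullet(\got h; \got h^c)[1]$.

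The main step -- and the principal obstacle -- is a Kuranishi-type argument showing that the projection $(\phi, \mu') \mapsto \mu'$ from the zero set of $F$ onto $\Lambda$ is locally surjective at $(0, \mu)$. Decompose $C^1_{\rm def}(\got h \subset \got g) = Z^1_{\rm def} \oplus W$ and $C^2_{\rm def}(\got h \subset \got g) = B^2_{\rm def} \oplus K$ for chosen linear complements $W$ and $K$; then $\delta_{\got h}^{\rm Bott}\vert_W \colon W \to B^2_{\rm def}$ is an isomorphism. The implicit function theorem applied to the $B^2$-component of $F$ produces a smooth map $\sigma \colon V \to W$ with $\sigma(\mu) = 0$ on a neighbourhood $V$ of $\mu$ in $\Lambda$, killing the $B^2$-part of $F(\sigma(\mu'), \mu')$. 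Interpreting $F$ as the $\got a$-component of the simultaneous MC equation and using that $\mu' \in \Lambda$ is exactly the MC condition on the $L[1]$-factor, the higher $L_\infty[1]$-identities force the remaining $K$-component of $F(\sigma(\mu'), \mu')$ to be a cocycle modulo $B^2_{\rm def}$, hence to define an element of $H^2_{\rm def}(\got h \subset \got g)$. Under the hypothesis $H^2_{\rm def}(\got h \subset \got g) = 0$ this class vanishes and $\sigma(\mu')$ is an honest solution; shrinking $V$ so that $\grap(\sigma(\mu'))$ lies in a prescribed neighbourhood $U_{\got h}$ of $\got h$ in $\Gr_k(\got g)$ then gives stability.

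For the manifold statement, the same implicit function theorem applied with $\mu' = \mu$ fixed identifies the zero set of $F(\cdot, \mu)$ near the origin with the graph of a smooth map $Z^1_{\rm def} \to W$, a smooth submanifold of $\Hom(\got h, \got h^c)$ of dimension $\dim Z^1_{\rm def}(\got h \subset \got g) = \dim C^1_{\rm def} - \dim B^2_{\rm def}$; transported through the chart $\Psi$ this gives the claimed local manifold structure on the space of $k$-dimensional Lie subalgebras of $\got g$ around $\got h$.
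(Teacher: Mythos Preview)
The paper does not give its own proof of this theorem; it is quoted from \cite{Crainic-Ivan-Schaetz}. The method used there, and reproduced in this paper for the analogous ideal result (Theorem \ref{theorem stability ideals}), differs from yours: one builds vector bundles $E, F \to \Gr_k(\got g)$, a section $\sigma \in \Gamma(E)$ whose zero locus is the space of subalgebras, and a morphism $\tau \in \Gamma(\Hom(E, F))$ with $\tau \circ \sigma = 0$ (encoding Jacobi), so that $T_{\got h}\Gr_k(\got g) \to E_{\got h} \to F_{\got h}$ is the deformation complex in degrees $1,2,3$ and exactness there is precisely $H^2_{\rm def}(\got h\subset\got g) = 0$. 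Both conclusions then fall out of the abstract Proposition \ref{proposition of stability of zeros}, with variation of $\mu$ handled only through the continuity of $\mu \mapsto (\sigma_\mu, \tau_\mu)$. Your direct Kuranishi approach is a reasonable alternative that avoids setting up $E$ and $F$ globally.

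However, the step where you claim the $L_\infty$-identities force the $K$-component of $F(\sigma(\mu'),\mu')$ ``to be a cocycle modulo $B^2$, hence to define an element of $H^2$'' is not correct as phrased and hides the real work. What the Bianchi identity for the MC curvature actually gives is
\[
\bigl(m_1^{\mu'} + m_2^{\mu'}(\phi,\cdot) + \tfrac12 m_3^{\mu'}(\phi,\phi,\cdot)\bigr)\, F(\phi,\mu') = 0,
\]
with all brackets built from $\mu'$; this does \emph{not} say that $F$ is a $\delta_{\got h}^{\rm Bott}$-cocycle. Rewriting, one gets $\delta^{\mu'}(F_K) = -m_2^{\mu'}(\phi, F_K) - \tfrac12 m_3^{\mu'}(\phi,\phi,F_K)$, an equation linear in $F_K$ with coefficients that are small when $(\mu'-\mu,\phi)$ is small. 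The hypothesis $H^2_{\rm def}=0$ is equivalent to the injectivity of $\delta_{\got h}^{\rm Bott}\big\vert_K \colon K \to C^3_{\rm def}(\got h\subset\got g)$, and a finite-dimensional perturbation argument then shows the operator on the left restricted to $K$ remains injective, forcing $F_K = 0$. This perturbative injectivity step---not a cohomology-class argument---is what is actually needed, and it is absent from your sketch; the same remark applies verbatim to your derivation of the manifold statement.
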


	\section{Infinitesimal deformations of an ideal in a Lie algebra}\label{def_coh_ideal1}
	Consider again Definition \ref{def_def_subalgebras} and Proposition \ref{ass_cocycle_deformations_subalgebras}
		 in the case where $\got{h}\coloneqq\got{i}$ happens not to be just a Lie subalgebra of the Lie algebra $\got g$, but also an ideal of $\got{g}$. In this case  the Bott representation vanishes, so $H^1_{\rm def}(\got i\subset \got g)$ equals $[\got i, \got i]^\circ\otimes \got g/\got i$, and more generally, the deformation complex of the ideal as a Lie subalgebra
 is then $(C^\bullet_{\rm def}(\got i\subset \got g), \delta_{\got i}^{\rm Bott})=(C^\bullet(\got{i};\got{g}/\got{i}),\delta_{\tr})$, with the trivial representation  $\tr\colon \got{i}\to\mathfrak{gl}(\got{g}/\got{i})$, i.e.~the zero linear map. This section describes the appropriate cohomology associated to deformations of an ideal \emph{as an ideal} and not as a mere Lie subalgebra.

\subsection{Smooth deformations and infinitesimal deformations of an ideal}\label{def_coh_ideal}
A Lie ideal $\got i$ of a Lie algebra $\got g$ comes with two natural representations:
\begin{enumerate}[label=(\roman*)]
	\item $\ad^\got{i}\colon \got{g}\to\got{gl}(\got{i})$, \quad $\ad^\got{i}_x(u)=[x,u]$ for all $x\in \got g$ and all $u\in \got i$, and 
	\item $\ad^{\got{g}/\got{i}}\colon \got{g}\to\got{gl}(\got{g}/\got{i}), \quad {\ad}^{\got{g}/\got{i}}_x(\overline{y})=\overline{[x,y]}$ for all $x,y\in\got g$.
\end{enumerate}
	Consider the associated $\Hom$-representation on $\Hom(\got{i},\got{g}/\got{i})=\got i^*\otimes \got g/\got i$:
	\begin{equation*}
		\ad^{\Hom}\colon \got{g}\otimes\Hom(\got{i},\got{g}/\got{i})\to\Hom(\got{i},\got{g}/\got{i}), \quad \ad^{\Hom}_x(\phi)(u)=\ad^{\got{g}/\got{i}}_x(\phi(u))-\phi(\ad^\got{i}_x(u)),
	\end{equation*}
	for all $x\in \got g$ and all $u\in \got i$.
Consider the corresponding Chevalley-Eilenberg complex $C^\bullet(\got{g};\got{i}^*\otimes\got{g}/\got{i})\coloneqq\Wedge^{\bullet}\got{g}^*\otimes\got{i}^*\otimes\got{g}/\got{i}$, with $\delta_{\got{g}\rhd\got{i}}^{\Hom}:=\delta_{\got g}^{\ad^{\Hom}}$. The obtained cohomology is written $H^\bullet_{\delta_{\got{g}\rhd\got{i}}^{\Hom}}(\got{g};\got{i}^*\otimes\got{g}/\got{i})=:H^\bullet(\got i\lhd \got g)$ for simplicity. Given $f\in C^k(\got{g};\got{i}^*\otimes\got{g}/\got{i})$, the Chevalley-Eilenberg differential is given explicitly by the following formula:
\begin{equation}\label{formula of the differential on the def. complex of ideals}
	\begin{split}
		\delta_{\got{g}\rhd\got{i}}^{\Hom}(f)(x_1,\dots,x_{k+1},u)&=\sum_{i=1}^{k+1}(-1)^{i+1}\ad^{\got{g}/\got{i}}_{x_i}(f(x_1,\dots, \widehat{x_i} ,\dots,x_{k+1},u))\\
		&+\sum_{i=1}^{k+1}(-1)^if(x_1,\dots, \widehat{x_i} ,\dots,x_{k+1},[x_i,u])\\
		&+\sum_{i<j}(-1)^{i+j}f([x_i,x_j],x_1,\dots, \widehat{x_i} ,\dots, \widehat{x_j} ,\dots,x_{k+1},u)
	\end{split}
\end{equation}
for $x_1,\ldots, x_{k+1}\in\got g$ and $u\in \got i$.

\begin{defin}\label{lemma for association of a cocycle to a deformation of an ideal}
		A smooth deformation $(\got{i}_t)_{t\in I}$ of an ideal $\got{i}$ inside $\got{g}$ is a smooth curve $\tilde{\got{i}}\colon [0,1]\to\Gr_{k}(\got{g})$ such that $\got{i}_0=\got{i}$ and $\got{i}_t\coloneqq\tilde{\got{i}}(t)$ is an ideal of $\got{g}$ for all $t\in I\coloneqq [0,1]$. 
		\end{defin}
		Because ideals are $\Ad$-invariant, 
the equivalence relation in Definition \ref{def_def_subalgebras} would here become trivial: in the sense of this definition, two equivalent smooth deformations of an ideal as an ideal (not only as a Lie subalgebra) would be equal. Accordingly, two deformations of an ideal are \emph{equivalent} if they are equal -- there is no room here for a more permissive relation of equivalence of ideals.
		
		\begin{prop}\label{def_cocycle_ideal}
		Let $\got g$ be a Lie algebra and let $\got i\subseteq \got g$ be an ideal. If $(\got{i}_t)_{t\in I}$ is a smooth deformation of the ideal $\got{i}$ inside $\got{g}$, then
	\begin{equation}\label{the velocity of a curve of ideals}
		\fracddtz\got{i}_t\in T_{\got{i}}\Gr_k(\got{g})
	\end{equation}
	is a $0$-cocycle in $ C^\bullet(\got{g};\got{i}^*\otimes\got{g}/\got{i})$.
\end{prop}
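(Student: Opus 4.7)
The plan is to mimic the strategy of the proof of Proposition \ref{ass_cocycle_deformations_subalgebras}, but to exploit the stronger \emph{ideal} condition $[\got g,\got i_t]\subseteq \got i_t$ in place of the subalgebra condition $[\got i_t,\got i_t]\subseteq \got i_t$. Fix a linear complement $\got i^c$ of $\got i$ in $\got g$ and use the chart from Appendix \ref{appendix_G_k}: shrinking $I$ if necessary, the curve $\tilde{\got i}$ is given by $\got i_t=\grap(\phi(t))$ for a unique smooth curve $\phi\colon I\to \Hom(\got i,\got i^c)$ with $\phi(0)=0$, and the tangent vector at $t=0$ equals
\[
\dot{\got i}_0=\pi_{\got g/\got i}\circ \dot{\phi}(0)\in \got i^*\otimes \got g/\got i=C^0(\got g;\got i^*\otimes \got g/\got i).
\]
Exactly as in the proof of Proposition \ref{ass_cocycle_deformations_subalgebras}, I would promote $\phi$ to the smooth curve
\[
\alpha\colon I\to \GL(\got g),\qquad \alpha(t)=\begin{pmatrix}\id_{\got i}&0\\ \phi(t)&\id_{\got i^c}\end{pmatrix},
\]
which satisfies $\alpha(0)=\id$, $\alpha(t)(\got i)=\got i_t$, and $\pi_{\got g/\got i}\circ\dot\alpha(0)\arrowvert_{\got i}=\dot{\got i}_0$.

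Next, I would encode the ideal condition in a vanishing curve of tensors. Since $\got i_t\lhd \got g$ for all $t\in I$, the expression $\alpha(t)^{-1}[y,\alpha(t)(u)]$ lies in $\got i$ for every $y\in\got g$ and $u\in\got i$; hence the smooth map
\[
\tau\colon I\to \got g^*\otimes \got i^*\otimes \got g/\got i,\qquad \tau(t)(y,u)=\pi_{\got g/\got i}\!\left(\alpha(t)^{-1}[y,\alpha(t)(u)]\right),
\]
vanishes identically. Differentiating at $t=0$, using $\alpha(0)=\id$ and $\tfrac{d}{dt}\arrowvert_{t=0}\alpha(t)^{-1}=-\dot{\alpha}(0)$, yields
\[
0=\dot{\tau}(0)(y,u)=-\pi_{\got g/\got i}\dot\alpha(0)[y,u]+\pi_{\got g/\got i}[y,\dot\alpha(0)(u)]
\]
for all $y\in \got g$ and $u\in\got i$.

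Finally, I would compare this equation with the explicit formula \eqref{formula of the differential on the def. complex of ideals} for the differential on $C^0(\got g;\got i^*\otimes\got g/\got i)$: writing $f\eqdef\dot{\got i}_0=\pi_{\got g/\got i}\circ\dot\alpha(0)\arrowvert_{\got i}$ and noting that $[y,u]\in\got i$ since $\got i$ is an ideal, one gets
\[
\delta^{\Hom}_{\got g\rhd\got i}(f)(y,u)=\ad^{\got g/\got i}_y(f(u))-f([y,u])=\pi_{\got g/\got i}[y,\dot\alpha(0)(u)]-\pi_{\got g/\got i}\dot\alpha(0)[y,u]=0,
\]
establishing the cocycle condition. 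The step that does the real work is the differentiation of $\tau$: it is there (and only there) that the ideal property intervenes, and the resulting identity is precisely what distinguishes the ideal cocycle condition from the Bott cocycle condition for subalgebras (which corresponds to taking only $y\in\got i$, where the statement degenerates since the Bott representation of $\got i$ on $\got g/\got i$ is trivial). No further obstacle is expected; the rest is a matter of matching conventions and projections.
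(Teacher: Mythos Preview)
Your proof is correct and follows essentially the same approach as the paper: both construct the curve $\alpha(t)\in\GL(\got g)$ with $\alpha(t)(\got i)=\got i_t$, encode the ideal condition as the vanishing of $t\mapsto \pi_{\got g/\got i}\circ\alpha(t)^{-1}[x,\alpha(t)(u)]$, and differentiate at $t=0$ to obtain precisely the $0$-cocycle equation for $\delta_{\got g\rhd\got i}^{\Hom}$. The only cosmetic difference is that the paper first introduces the induced isomorphism $\overline{\alpha(t)}\colon\got g/\got i\to\got g/\got i_t$ before rewriting the vanishing expression in the same form you use directly.
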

Because of this result, cohomology classes in $H^0(\got g; \got i^*\otimes \got g/ \got i)$ are called \textbf{infinitesimal deformations} of $\got i$.
\begin{proof}
		First recall that $\fracddtzs\got{i}_t$ is a linear map from $\got{i}$ to $\got{g}/\got{i}$ using the canonical identification $T_{\got{i}}\Gr_k(\got{g})\simeq\got{i}^*\otimes\got{g}/\got{i}$ (see Appendix \ref{tangent_grk}). In addition identify $\fracddtzs{\got{i}_t}=\pi_{\got{g}/\got{i}}\circ\fracddtzs\alpha(t)\arrowvert_{\got i}$, where the curve\footnote{$\alpha$ might be defined on a small neighborhood of $0$ only.} $\alpha\colon I\ni t\mapsto \alpha(t)\in\GL(\got{g})$ is such that $\alpha(t)(\got{i})=\got{i}_t$ for each $t\in I$, and where $\pi_{\got{g}/\got{i}}\colon \got{g}\to\got{g}/\got{i}$ is the canonical projection.

		For each $t\in I$ the map $\pi_{\got{g}/\got{i}_t}$ has kernel $\got i_t$ and factors hence to an isomorphism $\overline{\alpha(t)}\colon \got g/\got i\to \got g/\got i_t$ such that 
				\begin{equation}\label{commutative diagram in the proof of the cocycle in Lie subalgebras}
		\begin{tikzcd}
			\got{g}\arrow[r,"\alpha(t)"] \arrow[d,swap,"\pi_{\got{g}/\got{i}}"] &
			\got{g} \arrow[d,"\pi_{\got{g}/\got{i}_t}"]\\
			\got{g}/\got{i} \arrow[r,"\overline{\alpha(t)}"] & \got{g}/\got{i}_t
		\end{tikzcd}
	\end{equation}
	commutes.
	The fact that $\got{i}_t\subset\got{g}$ is an ideal for each $t\in I$ means that $[\got{g},\got{i}_t]\subset \got{i}_t$. Consider the smooth map $\sigma\colon I\to\Hom(\got{g}, \got{i}^*\otimes\got{g}/\got{i})$ defined, for any $t\in I$, $x\in\got{g}$ and $u\in\got{i}$, by
	\begin{equation*}
		\sigma(t)(x,u)=\overline{\alpha(t)}^{-1}\circ\pi_{\got{g}/\got{i}_t}([x,\alpha(t)(u)])\stackrel{\eqref{commutative diagram in the proof of the cocycle in Lie subalgebras}}{=}\pi_{\got{g}/{\got{i}}}\circ{\alpha(t)}^{-1}([x,\alpha(t)(u)]).
	\end{equation*}
	$\sigma$ vanishes at $t\in I$ if and only if $\got i_t$ is an ideal. 
	Differentiating with respect to $t$ the  expression $\sigma=0$ yields
	\begin{equation}\label{equation of differentiation to appear the cocycle for deform of ideals}
		0=\biggleftpar\fracddtz\sigma(t)\biggrightpar(x,u)=-\pi_{\got{g}/\got{i}}\circ\biggleftpar\fracddtz{\alpha(t)}\biggrightpar\bigleftpar[x, u]\bigrightpar+\pi_{\got{g}/\got{i}}\biggleftpar\biggleftbra x,\biggleftpar\fracddtz{\alpha(t)}\biggrightpar(u)\biggrightbra\biggrightpar,
	\end{equation}
for all $x\in \got g$ and $u\in \got i$.
	This is the cocycle condition for the linear map $\pi_{\got{g}/\got{i}}\circ\fracddtzs{\alpha(t)}\arrowvert_{\got i}\colon \got{i}\to\got{g}/\got{i}$ in $ C^0(\got{g};\got{i}^*\otimes\got{g}/\got{i})$. 
\end{proof}

\subsection{Relation with other deformation cohomologies induced by ideals in Lie algebras}\label{rel_other_coh}
This section compares the deformation cohomology and deformation classes of ideals with several deformations cohomology and deformation classes associated to ideals in Lie algebras:
\begin{enumerate}
\item The ideal $\got i$ of $\got g$ is a subrepresentation of the adjoint representation of $\got g$, which can be deformed as a subrepresentation, i.e.~as a morphism of Lie algebras $\got g\to \got{gl}(\got g,\got i)$, where $\got{gl}(\got{g},\got{i})$ is the Lie subalgebra of $\got{gl}(\got{g})$ consisting of all the endomorphisms of $\got{g}$ preserving the vector subspace $\got{i}\lhd\got{g}$.
\item The ideal $\got i$ of $\got g$ defines the morphism $\pi_{\got g/\got i}\colon \got g\to \got g/\got i$ of Lie algebras, which can be deformed as such a morphism, see Remark \ref{rem_def_morphisms}.
\item The Lie algebra $\got g/\got i$ defined by $\got i$ can be deformed as a Lie algebra.
\item This section considers as well Nijenhuis and Richardson's deformation class \cite{Nijenhuis-Richardson-Deformations-of-Lie-algebra-structures-67'} associated to the ideal $\got i\subseteq \got g$.
\item Finally, the deformation class of $\got i$ as a mere \emph{Lie subalgebra} of $\got g$ can be considered.
\end{enumerate}

For the convenience of the reader, the adjoint action of $\got g/\got i$ on itself is written $\overline{\ad}$ in this section. The adjoint action of $\got g$ on itself is  simply written $\ad$. It is easy to see that $\pi_{\got{g}/\got{i}}^*\overline{\ad}=\ad^{\got{g}/\got{i}}$.

\subsubsection{Relation with the deformation cohomology of the ideal $\got i$ as a subrepresentation of the adjoint of $\got g$}\label{def_subrep_adjoint_subsec}

Denote by $\got{gl}(\got{g},\got{i})$ the Lie subalgebra of $\got{gl}(\got{g})$ consisting of all the endomorphisms of $\got{g}$ preserving the vector subspace $\got{i}\lhd\got{g}$. Since the adjoint representation $\ad^{\got g}\colon \got g\to \got{gl}(\got g)$ of $\got g$ preserves $\got i$, it has image in $\got{gl}(\got g, \got i)$.
The natural inclusion $i_{\got{gl}(\got{g})}\colon \got{gl}(\got{g},\got{i})\hookrightarrow\got{gl}(\got{g})$ induces  the following linear map for all $k\geq 0$:
\begin{equation*}
(i_{\got{gl}(\got{g})})_*\colon  C^k(\got{g}; \got{gl}(\got g, \got{i}))\hookrightarrow\underbrace{ C^{k}(\got{g}; \got{gl}(\got{g}))}_{C_{\defor}^k(\ad^{\got{g}})}, \quad (i_{\got{gl}(\got{g})})_*(\phi)(x_1,\dots,x_k)=i_{\got{gl}(\got{g})}(\phi(x_1,\dots,x_k)).
\end{equation*}
The representation in $ C^{\bullet}(\got{g}; \got{gl}(\got{g}))$, the deformation complex of the Lie algebra morphism $\ad^{\got{g}}:\got{g}\to\got{gl}(\got{g})$, is $r\eqdef(\ad^{\got{g}})^*(\ad^{\got{gl}(\got{g})})$ and so its differential $\delta_{\ad^{\got{g}}}$ is given by 
\begin{align*}
	\delta_{\ad^{\got{g}}}(\omega)(x_1,\dots,x_{k+1})(y)&=\sum_{i=1}^{k+1}(-1)^{i+1}\left[x_i,\omega(x_1,\dots, \widehat{x_i} ,\dots,x_{k+1})(y)\right]\\
&+\sum_{i=1}^{k+1}(-1)^i\omega\left(x_1,\dots, \widehat{x_i} ,\dots,x_{k+1}\right)([x_i,y])\\
&+\sum_{i<j}(-1)^{i+j}\omega\left([x_i,x_j],x_1,\dots, \widehat{x_i} ,\dots, \widehat{x_j} ,\dots,x_{k+1}\right)(y)
\end{align*}
for $\omega\in C^k(\got{g}; \got{gl}(\got{g}))$ and $x_1,\dots,x_{k+1},y\in\got{g}$.
This differential restricts to the subspace $ C^\bullet(\got{g}; \got{gl}(\got g, \got{i}))$ of $ C^{\bullet}(\got{g}; \got{gl}(\got{g}))$ and $(i_{\got{gl}(\got{g})})_*\colon \bigleftpar C^{\bullet}(\got{g};\got{gl}(\got{g},\got{i})), \delta_{\ad^{\got{g}}}\bigrightpar\hookrightarrow\bigleftpar C^{\bullet}(\got{g};\got{gl}(\got{g})), \delta_{\ad^{\got{g}}}\bigrightpar$ is a cochain map.

\bigskip

The category of cochain complexes is an abelian category and so kernels and cokernels exist in it. If $\Phi\colon (B^{\bullet}, d_B)\to (A^{\bullet}, d_A)$ is a cochain map, its kernel $\Ker^{\bullet}(\Phi)$, its image $\img^{\bullet}(\Phi)$ and its cokernel $\Coker^{\bullet}(\Phi)\eqdef B^{\bullet}/\img^{\bullet}(\Phi)$ complexes are defined degree-wise. Denote by $p_{\Coker}\colon (A^{\bullet}, d_A)\twoheadrightarrow(\Coker^{\bullet}(\Phi), \overline{d_A})$, the canonical projection, where $\overline{d_A}$ is exactly defined such that $p_{\Coker}$ is a cochain map: $\overline{d_A}\circ p_{\Coker}=p_{\Coker}\circ d_A$.

\begin{prop}
The deformation complex $ C^{\bullet}(\got{g}; \got{i}^*\otimes\got{g}/\got{i})$ of an ideal $\got{i}\lhd\got{g}$ fits into the following short exact sequence of cochain complexes
\begin{equation*}
	\begin{tikzcd}
		(C^{\bullet}(\got{g};\got{gl}(\got{g},\got{i})), \delta_{\ad^{\got{g}}}) \ar[rr, hookrightarrow, "(i_{\got{gl}({\got{g}})})_*"] && \bigleftpar C^{\bullet}(\got{g}; \got{gl}(\got{g})), \delta_{\ad^{\got{g}}} \bigrightpar \ar[rr, twoheadrightarrow] && \bigleftpar C^{\bullet}(\got{g}; \got{i}^*\otimes\got{g}/\got{i}), \delta_{\got{g}\rhd\got{i}}^{\Hom}\bigrightpar.
	\end{tikzcd}
\end{equation*}
\end{prop}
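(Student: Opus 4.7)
The plan is to realize this short exact sequence as the one obtained by tensoring with $\wedge^\bullet\got g^*$ the natural short exact sequence of $\got g$-representations
\[
0\to \got{gl}(\got g,\got i)\xhookrightarrow{i_{\got{gl}(\got g)}}\got{gl}(\got g)\xrightarrow{p}\got i^*\otimes\got g/\got i\to 0,
\]
where $p(T)\coloneqq \pi_{\got g/\got i}\circ T\arrowvert_{\got i}$ for $T\in\got{gl}(\got g)$. The proof splits naturally into three steps.

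First, I would show the above sequence is exact as a sequence of vector spaces. By construction $T$ preserves $\got i$ if and only if $\pi_{\got g/\got i}\circ T\arrowvert_{\got i}=0$, hence $\ker p=\got{gl}(\got g,\got i)$. Surjectivity of $p$ follows from a choice of linear complement $\got i^c$ of $\got i$ in $\got g$: given $\phi\in\got i^*\otimes \got g/\got i$, any lift $\widetilde\phi\colon \got i\to\got g$ of $\phi$ with respect to $\pi_{\got g/\got i}$ can be extended to $T\in\got{gl}(\got g)$ by $T\arrowvert_{\got i^c}=0$, so that $p(T)=\phi$.

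Second, and this is the main content, I would verify that $p$ is $\got g$-equivariant with respect to the representations $r=(\ad^{\got g})^*(\ad^{\got{gl}(\got g)})$ on $\got{gl}(\got g)$ and $\ad^{\Hom}$ on $\got i^*\otimes \got g/\got i$. For $x\in\got g$ and $T\in\got{gl}(\got g)$, $r_x(T)=[\ad_x,T]=\ad_x\circ T-T\circ \ad_x$, so for $u\in\got i$,
\[
p(r_x(T))(u)=\pi_{\got g/\got i}\bigl([x,T(u)]\bigr)-\pi_{\got g/\got i}\bigl(T([x,u])\bigr)=\ad^{\got g/\got i}_x\bigl(p(T)(u)\bigr)-p(T)\bigl(\ad^{\got i}_x(u)\bigr)=\ad^{\Hom}_x(p(T))(u),
\]
where the first equality uses that $\got i$ is an ideal, so $[x,u]\in\got i$ and $p(T)([x,u])=\pi_{\got g/\got i}(T([x,u]))$. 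This establishes equivariance. In particular, $\got{gl}(\got g,\got i)=\ker p$ is a subrepresentation, which justifies the claim in the previous paragraph that $\delta_{\ad^{\got g}}$ restricts to $ C^\bullet(\got g;\got{gl}(\got g,\got i))$.

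Finally, applying the exact functor $\wedge^k\got g^*\otimes-$ to the short exact sequence of $\got g$-modules yields for each $k\geq 0$ a short exact sequence of vector spaces
\[
0\to C^k(\got g;\got{gl}(\got g,\got i))\xhookrightarrow{(i_{\got{gl}(\got g)})_*} C^k(\got g;\got{gl}(\got g))\xrightarrow{p_*} C^k(\got g;\got i^*\otimes \got g/\got i)\to 0.
\]
Since $(i_{\got{gl}(\got g)})_*$ and $p_*$ are induced from $\got g$-equivariant maps, they intertwine the Chevalley--Eilenberg differentials, i.e.~$(i_{\got{gl}(\got g)})_*$ and $p_*$ are cochain maps; compatibility of $p_*$ with the differentials identifies the target differential as precisely $\delta_{\got g\rhd\got i}^{\Hom}$ by formula \eqref{formula of the differential on the def. complex of ideals}. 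The mildly delicate point is the equivariance verification in the second step, which hinges crucially on $\got i$ being an \emph{ideal} (so that $[x,u]\in\got i$ for all $x\in\got g$, $u\in\got i$); the rest is formal.
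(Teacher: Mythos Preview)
Your proof is correct and follows essentially the same approach as the paper: both define the surjection $C^k(\got g;\got{gl}(\got g))\to C^k(\got g;\got i^*\otimes\got g/\got i)$ via $\omega\mapsto \pi_{\got g/\got i}\circ\omega(\cdots)\arrowvert_{\got i}$, identify its kernel as $C^k(\got g;\got{gl}(\got g,\got i))$, and check compatibility with the differentials. The only organizational difference is that the paper verifies the cochain map property by a direct line-by-line computation at the level of cochains, whereas you first establish $\got g$-equivariance of $p$ at the level of coefficient modules and then invoke functoriality of Chevalley--Eilenberg; this is slightly cleaner but not materially different.
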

\begin{proof}
It is sufficient to check that the cokernel complex of the inclusion $(i_{\got{gl}({\got{g}})})_*$ is isomorphic to the deformation complex $ C^{\bullet}(\got{g};\got{i}^*\otimes\got{g}/\got{i})$. Consider, for any $k\geq 0,$ the surjective linear map
 \begin{equation*}
 	\pi_*\colon  C^k(\got{g};\got{gl}(\got{g}))\to C^k(\got{g}; \got{i}^*\otimes\got{g}/\got{i}), \quad \pi_*(\omega)(x_1,\dots,x_k)=\pi_{\got{g}/\got{i}}\circ\omega(x_1,\dots,x_k)|_{\got{i}}.
 \end{equation*}
The following direct computation shows that $\pi_*$ is a cochain map:
\begin{align*}
	\pi_*\bigleftpar\delta_{\ad^{\got{g}}}(\omega)\bigrightpar(x_1,\dots,x_{k+1})(u)&=\pi_{\got{g}/\got{i}}	\bigleftpar\delta_{\ad^{\got{g}}}(\omega)(x_1,\dots,x_{k+1})(u)\bigrightpar\\
	&=\sum_{i=1}^{k+1}(-1)^{i+1}\pi_{\got{g}/\got{i}}\bigleftpar[x_i,\omega(x_1,\dots, \widehat{x_i} ,\dots,x_{k+1})(u)]\bigrightpar\\
	&+\sum_{i=1}^{k+1}(-1)^i\pi_{\got{g}/\got{i}}\bigleftpar\omega(x_1,\dots, \widehat{x_i} ,\dots,x_{k+1})([x_i,u])\bigrightpar\\
	&+\sum_{i<j}(-1)^{i+j}\pi_{\got{g}/\got{i}}\bigleftpar\omega([x_i,x_j],x_1,\dots, \widehat{x_i} ,\dots, \widehat{x_j} ,\dots,x_{k+1})(u)\bigrightpar\\
	&=\sum_{i=1}^{k+1}(-1)^{i+1}\ad^{\got{g}/\got{i}}_{x_i}\bigleftpar\pi_*(\omega)(x_1,\dots, \widehat{x_i} ,\dots,x_{k+1})(u)\bigrightpar\\
	&+\sum_{i<j}(-1)^{i+j}\pi_*(\omega)\bigleftpar[x_i,x_j],x_1,\dots, \widehat{x_i} ,\dots, \widehat{x_j} ,\dots,x_{k+1}\bigrightpar(u)\\
	&+\sum_{i=1}^{k+1}(-1)^i\pi_*(\omega)\bigleftpar x_1,\dots, \widehat{x_i} ,\dots,x_{k+1}\bigrightpar([x_i,u])\\
	&=\delta_{\got{g}\rhd\got{i}}^{\Hom}\bigleftpar\pi_{*}(\omega)\bigrightpar(x_1,\dots,x_{k+1},u)
\end{align*}
for $x_1,\ldots, x_{k+1}\in\got g$ and $u\in\got i$.
The kernel complex of $\pi_*$ is exactly the complex $ C^{\bullet}(\got{g};\got{gl}(\got{g},\got{i}))$. The following commutative diagram is then automatically a commutative diagram of cochain complexes, that yields the desired isomorphism.
 \begin{equation*}
 	\begin{tikzcd}
 		\bigleftpar C^\bullet(\got{g};\got{gl}(\got{g})), \delta_{\ad^{\got{g}}}\bigrightpar \arrow[r, "\pi_*"] \ar[d, swap, "p_{\Coker}"] & \bigleftpar C^\bullet(\got{g}; \got{i}^*\otimes\got{g}/\got{i}), \delta_{\got{g}\rhd\got{i}}^{\Hom}\bigrightpar\\
 		\biggleftpar\frac{ C^{\bullet}(\got{g};\got{gl}(\got{g}))}{ C^{\bullet}(\got{g};\got{gl}(\got{g},\got{i}))}, \overline{\delta_{\ad^{\got{g}}}}\biggrightpar \ar[ur, dashed , swap, "\simeq"] 
 	\end{tikzcd} \end{equation*}\end{proof}
	
	\subsubsection{Relation with the deformation cohomology of the canonical projection $\got g\to \got g/\got i$}
	The ideal $\got i$ of $\got g$ defines the quotient Lie algebra $\got g/\got i$ and the canonical projection $\pi_{\got g/\got i}\colon \got g\to \got g/\got i$, which is an epimorphism of Lie algebras.
	\medskip
	
	Recall here that the shift by $k$ of a cochain complex $(A^\bullet, d_A)$ is $(A^\bullet[k], (-)^kd_A)$, by convention. In this section, several cochain complexes are shifted by $1$, giving a minus sign to the `shifted' differentials. \textbf{Note that whenever a cochain complex $(A^\bullet=\oplus_{k\geq 0}A^k[-k], d_A)$  is shifted by 1 in this section, the shift by 1 is understood to be truncated to non-negative degrees; i.e.~$A^\bullet[1]$ is understood to be $\oplus_{k\geq 1}A^k[-k+1]$.}

 \begin{prop}\label{the lemma where we defined the cochain map from def. complex of can projection to the def. complex of ideals}
 There is a  natural cochain map from the deformation complex of the canonical projection $\pi_{\got{g}/\got{i}}\colon \got{g}\to\got{g}/\got{i}$, as a Lie algebra morphism, to the deformation complex of the ideal $\got{i}\lhd\got{g}$, defined by:
 	\begin{equation*}
 		\Pi\colon (C^{\bullet}(\got{g};\got{g}/\got{i})[1], -\delta_{\pi_{\got{g}/\got{i}}})\to (C^{\bullet}(\got{g};\got{i}^*\otimes\got{g}/\got{i}), \delta_{\got{g}\rhd\got{i}}^{\Hom})\end{equation*}
		\begin{equation*}
	C^{k+1}(\got{g};\got{g}/\got{i})\ni  \phi\mto (-1)^{k+1}\phi\arrowvert_{(\wedge^{k}\got{g})\wedge\got{i}}
 	\end{equation*}
	for all $k\geq 0$,
 which in addition sends deformation cocycles to deformation cocycles.
 \end{prop}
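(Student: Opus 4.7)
The plan is a direct verification that $\Pi$ commutes with the two differentials, up to the sign coming from the shift; the statement that cocycles are sent to cocycles follows formally from this. Fix $\phi\in C^{k+1}(\got{g};\got{g}/\got{i})$, so $\Pi(\phi)\in C^k(\got{g};\got{i}^*\otimes\got{g}/\got{i})$ is defined by
\[\Pi(\phi)(x_1,\dots,x_k)(u)=(-1)^{k+1}\phi(x_1,\dots,x_k,u)\]
for $x_1,\dots,x_k\in\got{g}$ and $u\in\got{i}$, and is well-defined since $\got i$ is a linear subspace of $\got g$. I would then evaluate both $\Pi(\delta_{\pi_{\got{g}/\got{i}}}(\phi))$ and $\delta^{\Hom}_{\got{g}\rhd\got{i}}(\Pi(\phi))$ on an arbitrary tuple $(x_1,\dots,x_{k+1})\in\got{g}^{k+1}$, $u\in\got{i}$, and compare.

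The right-hand side, expanded via \eqref{formula of the differential on the def. complex of ideals}, is $(-1)^{k+1}$ times the sum of three contributions: an $\ad^{\got{g}/\got{i}}_{x_i}$-term acting on $\phi(\dots,u)$, a $\phi(\dots,[x_i,u])$-term, and a $\phi([x_i,x_j],\dots,u)$-term. The left-hand side expands, via the Chevalley-Eilenberg formula \eqref{Chevalley-Eilenberg differential formula with values in a representation} applied to $\delta_{\pi_{\got{g}/\got{i}}}(\phi)\in C^{k+2}(\got{g};\got{g}/\got{i})$ with representation $\ad^{\got{g}/\got{i}}=\pi_{\got{g}/\got{i}}^{*}\overline{\ad}$, to $(-1)^{k+2}$ times the analogous sums over $(x_1,\dots,x_{k+1},u)$. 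These involve two would-be extra terms coming from placing $u$ in a distinguished slot: an $\ad^{\got{g}/\got{i}}_u\phi(x_1,\dots,x_{k+1})$-term, and $(k+1)$ terms of the form $(-1)^{i+k+2}\phi([x_i,u],x_1,\dots,\widehat{x_i},\dots,x_{k+1})$.

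Two elementary consequences of $\got{i}\lhd\got{g}$ resolve the mismatch. First, $\ad^{\got{g}/\got{i}}_u=0$ for $u\in\got{i}$, since $[u,\got{g}]\subseteq\got{i}$; this kills the spurious $\ad^{\got{g}/\got{i}}_u$-term. Second, $[x_i,u]\in\got{i}$, so the remaining extra terms can be rewritten, using the skew-symmetry of $\phi$ to move $[x_i,u]$ past $k$ arguments to the last slot (yielding a sign $(-1)^k$), as $(-1)^i\phi(x_1,\dots,\widehat{x_i},\dots,x_{k+1},[x_i,u])$. Collecting everything and tracking the overall factors $(-1)^{k+2}$ on the left versus $(-1)^{k+1}$ on the right, one reads off $\Pi(\delta_{\pi_{\got{g}/\got{i}}}(\phi))=-\delta^{\Hom}_{\got{g}\rhd\got{i}}(\Pi(\phi))$, which is exactly the cochain-map condition with respect to the shifted differential $-\delta_{\pi_{\got g/\got i}}$. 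In particular $\Pi$ sends cocycles to cocycles.

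The only real work is careful sign and index bookkeeping; there is no conceptual obstacle once the ideal property is invoked to kill the $\ad^{\got{g}/\got{i}}_u$-term and to ensure that $[x_i,u]$ stays in $\got{i}$ so that the skew-symmetry rearrangement is meaningful.
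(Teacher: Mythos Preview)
Your verification that $\Pi$ intertwines the differentials is correct and is essentially the same computation as in the paper: both proofs expand the Chevalley--Eilenberg formula, observe that $\ad^{\got g/\got i}_u=0$ for $u\in\got i$, and rewrite the $[x_i,u]$-terms using skew-symmetry.

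However, you have misread the second claim. The phrase ``sends deformation cocycles to deformation cocycles'' does \emph{not} mean merely ``sends cocycles to cocycles'' (which is indeed formal once $\Pi$ is a cochain map). In this paper, a \emph{deformation cocycle} is the specific cocycle attached to a smooth deformation: for a Lie algebra morphism this is $\dot\phi(0)$ as in Remark~\ref{rem_def_morphisms}, and for an ideal this is $\fracddtzs\got i_t$ as in Proposition~\ref{def_cocycle_ideal}. The assertion is that if $(\pi^t_{\got g/\got i})_{t\in I}$ is a smooth deformation of the projection, then $\Pi$ sends its deformation cocycle to the deformation cocycle of the \emph{induced} smooth deformation of the ideal $\got i$. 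This is not a formal consequence of the cochain-map property: it requires you to (i) argue that a smooth deformation of $\pi_{\got g/\got i}$ actually induces a smooth deformation of $\got i$ by taking kernels (noting that surjectivity is an open condition so the kernels have constant dimension near $t=0$), and (ii) differentiate the identity $\pi^t_{\got g/\got i}\circ\alpha_t|_{\got i}=0$, where $(\alpha_t)$ is a smooth curve in $\GL(\got g)$ with $\alpha_t(\got i)=\ker\pi^t_{\got g/\got i}$, to relate the two cocycles. This second step is the content of the paper's proof that your proposal omits.
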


  Denote the image complex of $\Pi$ by $ C_{\wedge}^{\bullet}(\got{g};\got{i}^*\otimes\got{g}/\got{i})\eqdef\{\phi|_{\wedge^{\bullet}\got{g}\wedge\got{i}} \ | \ \phi\in\wedge^{\bullet+1}\got{g}^*\otimes\got{g}/\got{i}\}$ and the restriction of $\Pi$ to its image by $\Pi_{\wedge}$. Note that there is no better description of this complex -- it needs to be defined as the image of the map $\Pi$.

\begin{proof}
		Choose $\phi\in\wedge^{k}\got{g}^*\otimes \got{g}/\got{i}$. Then the following computation shows that $\Pi\circ(-\delta_{\pi_{\got{g}/\got{i}}})(\phi)=\delta_{\got{g}\rhd\got{i}}^{\Hom}\circ\Pi(\phi)$ and so $\Pi$ is a cochain map.
		Compute for $x_1, \ldots, x_{k}\in\got g$ and $x_{k+1}\in \got i$
	\begin{align*} 
		&(-1)^{k+1}(-\delta_{\pi_{\got{g}/{\got{i}}}}(\phi))|_{\wedge^{k}\got{g}\wedge\got{i}}(x_1,\dots,x_{k}, \underbrace{x_{k+1}}_{\in\got{i}})\\
		&=(-1)^{k}\sum_{i=1}^{k+1}(-1)^{i+1}(\pi_{\got{g}/\got{i}}^*\overline{\ad})_{x_i}(\phi(x_1,\dots, \widehat{x_i} ,\dots,x_{k+1}))\\
		&\quad +(-1)^{k}\sum_{1\leq i<j\leq k+1}(-1)^{i+j}\phi([x_i,x_j],x_1,\dots, \widehat{x_i} ,\dots, \widehat{x_j} ,\dots,x_k, ,x_{k+1})\\
		&=(-1)^{k}\sum_{i=1}^{k}(-1)^{i+1}\ad^{\got{g}/\got{i}}_{x_i}(\phi(x_1,\dots, \widehat{x_i} ,\dots,x_k, x_{k+1}))\\
		&\quad +(-1)^{k}\sum_{1\leq i<j\leq k}(-1)^{i+j}\phi([x_i,x_j],x_1,\dots, \widehat{x_i} ,\dots, \widehat{x_j} ,\dots,x_k, x_{k+1})\\
		&\quad +(-1)^{k}\sum_{i=1}^{k}(-1)^{i}\phi(x_1,\dots, \widehat{x_i} ,\dots,x_k,[x_i,x_{k+1}])\\
		&=(-1)^{k}\delta_{\got{g}\rhd\got{i}}^{\Hom}(\phi|_{\wedge^{k-1}\got{g}\wedge\got{i}})(x_1,\dots,x_{k}, x_{k+1})\\
		&=\delta_{\got{g}\rhd\got{i}}^{\Hom}((-1)^{k}\phi|_{\wedge^{k-1}\got{g}\wedge\got{i}})(x_1,\dots,x_{k}, x_{k+1}),
	\end{align*}
	where the second equality uses that $\pi_{\got{g}/\got{i}}^*\overline{\ad}=\ad^{\got{g}/\got{i}}$ and $\ad^{\got{g}/\got{i}}_{x_{k+1}}=0$. 
	For the second claim, let $\widetilde{\pi_{\got{g}/\got{i}}}\colon I\times\got{g}\to\got{g}/\got{i}$ with $\widetilde{\pi_{\got{g}/\got{i}}}(t,\cdot)\eqqcolon\pi^t_{\got{g}/\got{i}}$ be a smooth deformation of the canonical projection $\pi^0_{\got{g}/\got{i}}\eqdef\pi_{\got{g}/\got{i}}$. 
	Note that the dimension of $\Ker(\pi_t)$ can possibly vary but since the surjectivity of $\pi_{\got g/\got i}$ is an open condition, at least for $t$ in a sufficiently small neighborhood $J\subset I$ of $0$, the rank of $\pi_{\got g/\got i}^t$ remains maximal. Assume hence without loss of generality that the deformation of $\pi_{\got g/\got i}$ is deformed by surjective Lie algebra morphisms.
	The induced deformation of $\got{i}\eqdef\Ker(\got{\pi_{\got{g}/\got{i}}})$ as an ideal is given by $(\got i_t:=\Ker(\pi^t_{\got{g}/\got{i}}))_{t\in I}$ and the smoothness of this curve in the Grassmannian is guaranteed by the smoothness of $\widetilde{\pi_{\got{g}/\got{i}}}$. 
	
	Pick a smooth curve of linear vector space isomorphisms $(\alpha_t)_{t\in J\subset I}$ in $\GL(\got{g})$ with $\alpha_0=\Id$  and $\alpha_t(\got{i})=\got{i}_t$ for all $t\in J$ and consider the linear map $\pi_{\got{g}/\got{i}}^t\circ\alpha_t\colon \got{g}\to\got{g}/\got i$. Differentiating the equation $\pi_{\got{g}/\got{i}}^t\circ\alpha_t|_{\got{i}}=0$ yields  $\fracddtzs{\pi^t_{\got{g}/\got{i}}}(u)=\pi_{\got{g}/\got{i}}\circ\fracddtzs{(-\alpha_t)}(u)$ for all $u\in\got{i}$.
	The left-hand side is the deformation cocycle associated to the deformation of $\pi_{\got g/\got i}$, see Remark \ref{rem_def_morphisms}, and the right-hand side is, up to the minus sign, the deformation cocycle associated to the deformation of $\got i$, see Proposition \ref{def_cocycle_ideal}.
	Since $\Pi$ sends $\phi\in \wedge^1\got g^*\otimes\got g/\got i$ to $-\phi\arrowvert_{\got i}\in \wedge^0\got g^*\otimes \got i^*\otimes \got g/\got i$, this completes the proof of the second statement.
	\end{proof}

\begin{rem}
	One could hope that the natural inclusion \[i\colon C_{\wedge}^{\bullet}(\got{g};\got{i}^*\otimes\got{g}/\got{i})\hookrightarrow C^{\bullet}(\got{g};\got{i}^*\otimes\got{g}/\got{i})\] is a quasi-isomorphism. However this is not the case in general: Let $(\got{g},\mu_{\got{g}}=0)$ be an abelian Lie algebra. Then any vector subspace of $\got{g}$ is an ideal in $\got g$. Here $\delta_{\got{g}\rhd\got{i}}^{\Hom}=0$, so the cohomologies are identical to the complexes themselves and therefore cannot be isomorphic. 
	
	However,  in degree $0$, the vector spaces $C^0_\wedge(\got g, \got i^*\otimes \got g/\got i)$ and $C^0(\got g, \got i^*\otimes \got g/\got i)$ are equal, so also the $0$-cohomologies of the two complexes are equal.
	\end{rem}

\subsubsection{Relation with the deformation cohomology of $\got g/\got i$}
Now the deformation complex of the ideal $\got i$ of $\got g$ is set in relation with the deformation complex $\left( C^{\bullet}(\got{g}/\got{i};\got{g}/\got{i}), \delta_{\overline{\ad}}\right)$ of the quotient Lie algebra $\got g/\got i$.
\begin{prop}\label{lemma proving the s.e.s. relating deform. of ideals, canonical projection, and quotient Lie algebra}
	The cochain complex $\left(C^{\bullet}_{\wedge}(\got{g};\got{i}^*\otimes\got{g}/\got{i}), \delta_{\got{g}\rhd\got{i}}^{\Hom}\right)$ fits into the following short exact sequence of cochain complexes:
	\begin{equation}\label{the s.e.s. of the def. complex of ideals with C(g,g/i)}
		\begin{tikzcd}
			\left( C^{\bullet}(\got{g}/\got{i};\got{g}/\got{i})[1], -\delta_ {\got{g}/\got{i}}^{\overline{\ad}}\right) \ar[r, hookrightarrow] & \left(C^{\bullet}(\got{g};\got{g}/\got{i})[1], -\delta_{\pi_{\got{g}/\got{i}}}\right)\ar[r, "\Pi_{\wedge}", twoheadrightarrow] & \left( C_{\wedge}^{\bullet}(\got{g};\got{i}^*\otimes\got{g}/\got{i}), \delta_{\got{g}\rhd\got{i}}^{\Hom}\right).
		\end{tikzcd}
	\end{equation}
	The first map is simply the inclusion $\wedge^\bullet \got i^\circ\otimes \got g/\got i\hookrightarrow \wedge^\bullet \got g^*\otimes \got g/\got i$, via the canonical isomorphism $\pi_{\got g/\got i}^*\colon (\got g/\got i)^*\longrightarrow\got i^\circ\subseteq \got g^*$.
\end{prop}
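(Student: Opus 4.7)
The plan is to exhibit the three maps of the sequence and verify both exactness and the cochain map property, leveraging the fact (already noted above) that $\pi_{\got{g}/\got{i}}^*\overline{\ad}=\ad^{\got{g}/\got{i}}$ and that $\Pi_{\wedge}$ is by definition the corestriction of $\Pi$ to its image.

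First I will describe the first map of the sequence. Since $\pi_{\got{g}/\got{i}}$ is a surjective Lie algebra morphism, the dual map $\pi_{\got{g}/\got{i}}^*\colon(\got{g}/\got{i})^*\to\got{g}^*$ is an injection with image the annihilator $\got{i}^{\circ}\subseteq\got{g}^*$. This induces for every $k\geq 0$ an injection
\[
\iota_k\colon\wedge^{k+1}(\got{g}/\got{i})^*\otimes\got{g}/\got{i}\hookrightarrow\wedge^{k+1}\got{g}^*\otimes\got{g}/\got{i},\qquad \phi\mto\pi_{\got{g}/\got{i}}^*\phi,
\]
which assembles, after the truncated $[1]$-shift, into an injective graded linear map $\iota\colon C^{\bullet}(\got{g}/\got{i};\got{g}/\got{i})[1]\hookrightarrow C^{\bullet}(\got{g};\got{g}/\got{i})[1]$. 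To show $\iota$ is a cochain map, I will use the standard functoriality of the Chevalley--Eilenberg complex under Lie algebra morphisms: for any Lie algebra morphism $\psi\colon \got{g}_1\to\got{g}_2$ and any representation $r$ of $\got{g}_2$ on $V$, pullback by $\psi$ intertwines $\delta^{r}_{\got{g}_2}$ on $C^\bullet(\got{g}_2;V)$ with $\delta^{\psi^*r}_{\got{g}_1}$ on $C^\bullet(\got{g}_1;\psi^*V)$. Applied to $\psi=\pi_{\got{g}/\got{i}}$ and $r=\overline{\ad}$, this yields $\pi_{\got{g}/\got{i}}^*\circ\delta^{\overline{\ad}}_{\got{g}/\got{i}}=\delta^{\pi_{\got{g}/\got{i}}^*\overline{\ad}}_{\got{g}}\circ\pi_{\got{g}/\got{i}}^*=\delta_{\pi_{\got{g}/\got{i}}}\circ\pi_{\got{g}/\got{i}}^*$, where the last equality is exactly the identification $\pi_{\got{g}/\got{i}}^*\overline{\ad}=\ad^{\got{g}/\got{i}}$. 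The two signs introduced by the $[1]$-shifts on source and target cancel, hence $\iota$ commutes with the shifted differentials.

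Next, the surjectivity of $\Pi_{\wedge}$ is immediate from the definition of $C^\bullet_\wedge(\got g;\got i^*\otimes\got g/\got i)$ as the image of $\Pi$, and $\Pi_\wedge$ was already shown to be a cochain map in Proposition~\ref{the lemma where we defined the cochain map from def. complex of can projection to the def. complex of ideals}. For exactness at the middle term, I will compute the kernel of $\Pi$ in each degree. By definition $\Pi(\phi)=(-1)^{k+1}\phi|_{\wedge^k\got{g}\wedge\got{i}}$ for $\phi\in\wedge^{k+1}\got{g}^*\otimes\got{g}/\got{i}$, so $\Pi(\phi)=0$ if and only if $\iota_u\phi=0$ for every $u\in\got{i}$; equivalently, $\phi\in\wedge^{k+1}\got{i}^\circ\otimes\got{g}/\got{i}$, which is precisely the image of $\iota_k$. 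This identifies $\Ker(\Pi_\wedge)=\img(\iota)$ degree-wise, completing the proof of exactness.

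I do not expect any serious obstacle here: the exactness is essentially the decomposition $\wedge^{k+1}\got{g}^*=\wedge^{k+1}\got{i}^\circ\oplus(\text{forms with a leg in }\got{i})$, and the cochain map property is a textbook functoriality of CE under Lie algebra morphisms. The only mild nuisance is sign-bookkeeping created by the $[1]$-shift convention and the $(-1)^{k+1}$ in the definition of $\Pi$; since both the source and target of $\iota$ are shifted, the overall signs cancel, and since $\Pi_\wedge$ was already verified to be a cochain map in the previous proposition, there is nothing new to check on that side.
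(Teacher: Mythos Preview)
Your proof is correct and follows essentially the same approach as the paper: both identify $\Ker(\Pi_\wedge)$ degree-wise with $\wedge^{k+1}\got{i}^\circ\otimes\got{g}/\got{i}\simeq C^{k+1}(\got{g}/\got{i};\got{g}/\got{i})$ and then verify that the inclusion is a cochain map. The only difference is cosmetic: you invoke the general functoriality of the Chevalley--Eilenberg complex under the Lie algebra morphism $\pi_{\got{g}/\got{i}}$, while the paper unwinds that functoriality by a direct computation of $\delta^{\overline{\ad}}_{\got{g}/\got{i}}(\overline{\phi})$ versus $\delta_{\pi_{\got{g}/\got{i}}}(\phi)$.
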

\begin{proof}
	The kernel complex of $\Pi_{\wedge}$ is given at degree $k$ by:
	\begin{equation}\label{the kernel complex in the content of deform. of ideals}
	\begin{split}
		\Ker^{k+1}(\Pi_{\wedge})&=\left\{\phi\colon\wedge^{k+1}\got{g}\to\got{g}/\got{i} \ | \ \phi(x_1,\dots,x_i,\dots,x_{k+1})=0 \ \text{if} \ x_i\in\got{i} \ \text{for some} \ i\in\{1,\dots,k+1\}\right\}\\
		&=\wedge^{k+1}\got{i}^\circ\otimes\got{g}/\got{i}\simeq C^{k+1}(\got{g}/\got{i};\got{g}/\got{i})
		\end{split}
	\end{equation}
	via the natural isomorphism
	\begin{equation}\label{the projection from the kernel complex to the def. complex of the quotient Lie algebra}
		\wedge^{k+1}\got i^\circ\otimes \got g/\got i\to C^{k+1}(\got{g}/\got{i};\got{g}/\got{i}), \quad \phi\mto \left(\overline{\phi}\colon (\overline{x_1},\ldots, \overline{x_{k+1}})\mapsto\phi(x_1,\ldots, x_{k+1})\right).
	\end{equation}
	Via this isomorphism, the inclusion $\left( C^{\bullet}(\got{g}/\got{i};\got{g}/\got{i}), \delta_{\got{g}/\got{i}}^{\overline{\ad}}\right)\hookrightarrow  \left( C^{\bullet}(\got{g};\got{g}/\got{i}), \delta_{\pi_{\got{g}/\got{i}}}\right) $ is a cochain map, as shown in the following computation.
	\begin{align*}
		\delta_{\got{g}/\got{i}}^{\overline{\ad}}(\overline{\phi})(\overline{x_1},\dots,\overline{x_{k+1}})&= \sum_{i=1}^{k+1}(-1)^{i+1}\overline{\ad}_{\overline{x_i}}(\overline{\phi}(\overline{x_1},\dots,  \widehat{\overline{x_i}} ,\dots,\overline{x_{k+1}}))\\
		&+\sum_{i<j}(-1)^{i+j}\overline{\phi}([\overline{x_i},\overline{x_j}],\overline{x_1},\dots,  \widehat{\overline{x_i}} ,\dots,  \widehat{\overline{x_j}} ,\dots,\overline{x_{k+1}})\\
		&=\sum_{i=1}^{k+1}(-1)^{i+1}(\pi_{\got{g}/\got{i}}^*\overline{\ad})_{x_i}(\phi(x_1,\dots,\widehat{x_i},\dots,x_{k+1}))\\
		&+\sum_{i<j}(-1)^{i+j}\phi([x_i,x_j],x_1,\dots,  \widehat{x_i} ,\dots,  \widehat{x_j} ,\dots,x_{k+1})\\
		&=\delta_{\pi_{{\got{g}/\got{i}}}}(\phi)(x_1,\dots,x_{k+1})
	\end{align*}
	for $\phi\in \wedge^{k}\got{i}^\circ\otimes\got{g}/\got{i}$ and $x_1,\ldots, x_{k+1}\in \got g$.
\end{proof}

\subsubsection{Relation with Nijenhuis and Richardson's deformation cohomology of an ideal}\label{rel_nij_ric}
Let here $ C_{\got{i}}^\bullet(\got{g};\got{g})$ be the graded vector space defined at degree $k$ by
\begin{equation*}
 C_{\got{i}}^k(\got{g};\got{g})\eqdef\left\{\phi\colon \wedge^k\got{g}\to\got{g}\ | \ \phi(x_1,\dots,x_i,\dots,x_k)\in\got{i}\  \text{if} \ x_i\in\got{i} \ \text{for some}\  i\in\{1,\dots,k\}\right\},
\end{equation*}
and equip it with the usual differential $\delta_{\got g}^{\rm ad}$, which preserves $ C_{\got{i}}^\bullet(\got{g};\got{g})$.

The obtained complex $ C_{\got{i}}^{\bullet}(\got{g};\got{g})$ appeared in \cite{Nijenhuis-Richardson-Deformations-of-Lie-algebra-structures-67'} as the complex controlling deformations of the Lie algebra $(\got{g}, \mu_{\got{g}})$, such that $\got{i}$ remains an ideal in it. This section explains how this deformation problem is related at the infinitesimal level to the study in this paper of deformations of ideals in Lie algebras. 

Recall the differential graded Lie algebra structure on $ C^\bullet(\got{g};\got{g})[1]$ defined in Section \ref{sec_LA_MC} by the Lie algebra $\got g$.

\begin{prop}\label{proposition/definition for/of the complex related to deform of ideals in Nij-Rich}
Let $\got g$ be a Lie algebra and $\got i$ be an ideal in $\got g$. The graded vector space $ C^{\bullet}_{\got{i}}(\got{g};\got{g})[1]$ is a differential graded Lie subalgebra of $ C^\bullet(\got{g};\got{g})[1]$ and the linear map
\begin{equation}\label{def_pi_star}
 \overline{\pi_*}\colon C_{\got{i}}^\bullet(\got{g};\got{g})[1]\to C^\bullet(\got{g}/\got{i};\got{g}/\got{i})[1], \qquad \phi\mto\overline{\pi_{\got{g}/\got{i}}\circ\phi} \end{equation}
 with $\overline{\pi_{\got{g}/\got{i}}\circ\phi}\in (C^\bullet(\got{g}/\got{i};\got{g}/\got{i})[1])_k
 $ defined as in \eqref{the projection from the kernel complex to the def. complex of the quotient Lie algebra} by 
 \[ \overline{\pi_{\got{g}/\got{i}}\circ\phi} (\overline{x_1}, \ldots, \overline{x_{k+1}})\eqdef(\pi_{\got{g}/\got{i}}\circ\phi)(x_1, \ldots, x_{k+1}),
 \]
for all $\phi\in C^{k+1}_{\got i}(\got g, \got g)$, $x_1, \ldots, x_{k+1}\in\got{g}$,
is a dgLa morphism\footnote{This map can be extended to the whole complex $C^\bullet_{\got i}(\got g; \got g)[1]$, i.e.~is not only defined on non-negative degrees. The space $(C^\bullet_{\got i}(\got g; \got g)[1])_{-1}$ is $C^0_{\got i}(\got g;\got g)=\got i$. The map $\overline{\pi_*}$ is simply zero on this space.}. Furthermore, the cokernel complex $\frac{ C^{\bullet}(\got{g};\got{g})[1]}{ C^{\bullet}_{\got{i}}(\got{g};\got{g})[1]}$ of the inclusion $ C^{\bullet}_{\got{i}}(\got{g};\got{g})[1]\hookrightarrow C^{\bullet}(\got{g};\got{g})[1]$ is isomorphic to $ C_{\wedge}^{\bullet}(\got{g};\got{i}^*\otimes\got{g}/\got{i})$ via the linear map
\begin{equation}\label{iso_coker_pi_complex}
\overline{\Pi}\colon  \overline{\phi}\mto (-1)^k(\pi_{\got{g}/\got{i}}\circ\phi)|_{\wedge^{k-1}\got{g}\wedge\got{i}}.
\end{equation}
\end{prop}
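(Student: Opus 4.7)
The plan is to verify the three claims in turn: the subcomplex and subalgebra property of $C^\bullet_{\got i}(\got g;\got g)[1]$, the dgLa morphism property of $\overline{\pi_*}$, and finally the isomorphism between the cokernel of the inclusion and $C^\bullet_\wedge(\got g;\got i^*\otimes\got g/\got i)$.

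For the first claim, I would check closure of $C^\bullet_{\got i}(\got g;\got g)[1]$ under the Nijenhuis--Richardson bracket \eqref{eq_bracket_explicit} directly from the formulas. Given $\phi\in C^r_{\got i}(\got g;\got g)$, $\psi\in C^s_{\got i}(\got g;\got g)$ and a tuple containing some $u\in\got i$, every term in $(\phi\circ\psi)(\dots)$ either evaluates $\psi$ on an argument list containing $u$ (yielding an element of $\got i$, which is then fed to $\phi$, producing again an element of $\got i$) or evaluates $\phi$ on an argument list that still contains $u$. In either case the result lies in $\got i$, and the same holds for $\psi\circ\phi$, so $\llbra\phi,\psi\rrbra\in C^{r+s-1}_{\got i}(\got g;\got g)$. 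Since $\got i$ is an ideal, one has $\mu_{\got g}\in C^2_{\got i}(\got g;\got g)$ and hence $\delta_{\got g}^{\ad}=\llbra\mu_{\got g},\cdot\rrbra$ preserves $C^\bullet_{\got i}(\got g;\got g)[1]$, which is therefore a differential graded Lie subalgebra.

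For the dgLa morphism property of $\overline{\pi_*}$, I would first observe that because $\phi\in C^\bullet_{\got i}(\got g;\got g)$ sends any tuple containing an element of $\got i$ into $\got i$, the composition $\pi_{\got g/\got i}\circ\phi$ vanishes on such tuples and hence descends to a well-defined element $\overline{\pi_{\got g/\got i}\circ\phi}\in C^\bullet(\got g/\got i;\got g/\got i)$. Bracket compatibility is then a direct computation: inside $\pi_{\got g/\got i}\circ(\phi\circ\psi)$ the inner evaluation $\psi(x_{\tau(1)},\ldots)$ only enters $\phi$ modulo $\got i$ (again by $\phi\in C^\bullet_{\got i}$), so its class equals $\overline{\pi_*}\psi(\overline{x_{\tau(1)}},\ldots)$, and symmetrically for $\psi\circ\phi$. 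Once the bracket is preserved, compatibility with the differentials reduces to the identity $\overline{\pi_*}(\mu_{\got g})=\mu_{\got g/\got i}$, which is just the fact that $\pi_{\got g/\got i}$ is a Lie algebra morphism.

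Finally, for the cokernel isomorphism, I would introduce the linear map
\[
\tilde\Pi\colon C^\bullet(\got g;\got g)[1]\longrightarrow C_{\wedge}^\bullet(\got g;\got i^*\otimes\got g/\got i),\qquad C^k(\got g;\got g)\ni\phi\longmapsto(-1)^k(\pi_{\got g/\got i}\circ\phi)\big|_{\wedge^{k-1}\got g\wedge\got i},
\]
and observe that it factors as $\tilde\Pi=\Pi\circ\pi_{\got g/\got i,*}$, where $\pi_{\got g/\got i,*}\colon C^\bullet(\got g;\got g)[1]\twoheadrightarrow C^\bullet(\got g;\got g/\got i)[1]$ is the surjection induced by $\pi_{\got g/\got i}$ and $\Pi$ is the cochain map of Proposition \ref{the lemma where we defined the cochain map from def. complex of can projection to the def. complex of ideals}. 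Directly from the definition of $C^\bullet_{\got i}(\got g;\got g)$, the kernel of $\tilde\Pi$ equals $C^\bullet_{\got i}(\got g;\got g)[1]$, and the image is $C^\bullet_\wedge(\got g;\got i^*\otimes\got g/\got i)$ by construction. Hence $\tilde\Pi$ descends to the announced bijection $\overline{\Pi}$; that it is a cochain map follows from the factorisation once one checks that $\pi_{\got g/\got i,*}$ intertwines $\delta_{\got g}^{\ad}$ with $-\delta_{\pi_{\got g/\got i}}$ on the shifted complexes, using that $\pi_{\got g/\got i}^*\overline{\ad}=\ad^{\got g/\got i}$. The main obstacle will be keeping the sign conventions consistent across the several degree shifts, but the structural content at each step is essentially formal.
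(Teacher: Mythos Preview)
Your proposal is correct and follows essentially the same route as the paper. Two small differences are worth noting. First, for the differential compatibility of $\overline{\pi_*}$ you deduce it from bracket compatibility together with $\overline{\pi_*}(\mu_{\got g})=\mu_{\got g/\got i}$, whereas the paper carries out the Chevalley--Eilenberg computation directly (after factoring through $\Ker^\bullet(\Pi_\wedge)$); your argument is the slicker of the two. Second, for the cokernel isomorphism you invoke the factorisation $\tilde\Pi=\Pi\circ(\pi_{\got g/\got i})_*$ and Proposition~\ref{the lemma where we defined the cochain map from def. complex of can projection to the def. complex of ideals}, while the paper verifies the cochain property of $\overline{\Pi}$ by a direct computation; the paper records exactly your factorisation afterwards in diagram~\eqref{commutative diagram where the def. complex of an ideal lives}. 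One minor correction: $(\pi_{\got g/\got i})_*$ intertwines $\delta_{\got g}^{\ad}$ with $\delta_{\pi_{\got g/\got i}}$ (same sign), and the minus signs only enter upon shifting both complexes by $1$; this is the sign bookkeeping you already flagged.
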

\begin{proof}
	The fact that the differential $\delta_{\got{g}}^{\ad}$ and the Gerstenhaber bracket $\llbra\cdot,\cdot\rrbra$ both restrict to $ C^{\bullet}_{\got{i}}(\got{g};\got{g})[1]$ is a direct observation of their formulas. Recalling the complex $\Ker^{\bullet}(\Pi_{\wedge})$ from \eqref{the kernel complex in the content of deform. of ideals}, for $k\geq 1$ the map $\overline{\pi_*}\colon C_{\got{i}}^k(\got{g};\got{g})\to C^k(\got{g}/\got{i};\got{g}/\got{i})$ factors as
	\begin{equation*}
		 C_{\got{i}}^k(\got{g};\got{g})\to\Ker^k(\Pi_{\wedge})\overset{\sim}{\longrightarrow} C^k(\got{g}/\got{i};\got{g}/\got{i}), \quad \phi\mto\pi_{\got{g}/\got{i}}\circ\phi\mto\overline{\pi_{\got{g}/\got{i}}\circ\phi}.
	\end{equation*}
Since the canonical identification $\Ker^k(\Pi_{\wedge})=\wedge^{k}\got i^\circ\otimes \got g/\got i\overset{\sim}{\longrightarrow}  C^k(\got{g}/\got{i};\got{g}/\got{i})$ is a cochain map, it is enough to check that also the map $ C_{\got{i}}^k(\got{g};\got{g})\to\Ker^k(\Pi_{\wedge})$ is a cochain map. Compute
		\begin{equation*}
			\begin{split}
				\delta_{\pi_{\got{g}/\got{i}}}(\pi_{\got{g}/\got{i}}\circ\phi)(x_1,\dots,x_{k+1})&=\sum_{i=1}^{k+1}(-1)^{i+1}(\pi_{\got{g}/\got{i}}^*\overline{\ad})_{x_i}(\pi_{\got{g}/\got{i}}\circ\phi(x_1,\dots, \widehat{x_i} ,\dots,x_{k+1}))\\
				&+\sum_{i,j}(-1)^{i+j}\pi_{\got{g}/\got{i}}\circ\phi([x_i,x_j],x_1,\dots, \widehat{x_i} ,\dots, \widehat{x_j} ,\dots,x_{k+1})\\
				&=\sum_{i=1}^{k+1}(-1)^{i+1}\pi_{\got{g}/\got{i}}([x_i,\phi(x_1,\dots, \widehat{x_i} ,\dots,x_{k+1})])\\
				&+\sum_{i<j}(-1)^{i+j}\pi_{\got{g}/\got{i}}\circ\phi([x_i,x_j],x_1,\dots, \widehat{x_i} ,\dots, \widehat{x_j} ,\dots,x_{k+1})\\
				&=(\pi_{\got{g}/\got{i}}\circ\delta^{\ad}_{\got{g}}(\phi))(x_1,\dots,x_{k+1})
			\end{split}
		\end{equation*}
		for $k\in\mathbb N$, $\phi\in C^k_{\got i}(\got g; \got g)$ and  $x_1,\ldots, x_{k+1}\in \got g$.
		
	Next check that  $\overline{\pi_*}\colon C_{\got{i}}^\bullet(\got{g};\got{g})[1]\to  C^\bullet(\got{g}/\got{i};\got{g}/\got{i})[1]$ respects as well the Lie brackets. Take  $\phi\in (C_{\got{i}}^\bullet(\got{g};\got{g})[1])_k$ and $\psi\in (C_{\got{i}}^\bullet(\got{g};\got{g})[1])_l$ and compute
	\begin{align*}
		&\left\llbra\overline{\pi_{\got{g}/\got{i}}\circ\phi},\overline{\pi_{\got{g}/\got{i}}\circ\psi}\right\rrbra(x_1,\ldots, x_{k+l+1})\\
		=\,&(-1)^{kl}\!\!\!\!\sum_{\tau\in S_{(l+1,k)}}\overline{\pi_{\got{g}/\got{i}}\circ\phi}\left(\overline{\pi_{\got{g}/\got{i}}\circ\psi}\left(\overline{x_{\tau(1)}},\dots,\overline{x_{\tau(l+1)}}\right),\overline{x_{\tau(l+2)}},\dots\overline{x_{\tau(k+l+1)}}\right)\\
		&-\sum_{\tau\in S_{(k+1,l)}}\overline{\pi_{\got{g}/\got{i}}\circ\psi}\left(\overline{\pi_{\got{g}/\got{i}}\circ\phi}\left(\overline{x_{\tau(1)}},\dots,\overline{x_{\tau(k+1)}}\right),\overline{x_{\tau(k+2)}},\dots\overline{x_{\tau(k+l+1)}}\right)
		\end{align*}
		for $x_1,\ldots, x_{k+l+1}\in\got g$.
		By definition, this is 
	\begin{align*}
		&(-1)^{kl}\!\!\!\!\sum_{\tau\in S_{(l+1,k)}}\pi_{\got{g}/\got{i}}\circ\phi\left(\psi\left(x_{\tau(1)},\dots,x_{\tau(l+1)}\right),x_{\tau(l+2)},\dots x_{\tau(k+l+1)}\right)\\
		&-\sum_{\tau\in S_{(k+1,l)}}\pi_{\got{g}/\got{i}}\circ\psi\left(\phi\left(x_{\tau(1)},\dots,x_{\tau(k+1)}\right),x_{\tau(k+2)},\dots, x_{\tau(k+l+1)}\right)=\pi_{\got{g}/\got{i}}\circ\llbra\phi,\psi\rrbra\left(x_1,\ldots, x_{k+l+1}\right).
	\end{align*}
\medskip
	
Finally check that the  linear map
	\begin{equation*}
	\overline{\Pi}\colon 	\biggleftpar\frac{ C^\bullet(\got{g};\got{g})}{ C_{\got{i}}^\bullet(\got{g};\got{g})}[1], -\overline{\delta_{\got{g}}^{\ad}}\biggrightpar \to \left(C_{\wedge}^{\bullet}(\got{g};\got{i}^*\otimes\got{g}/\got{i}), \delta_{\got{g}\rhd\got{i}}^{\Hom}\right), \qquad \overline{\phi}\mto (-1)^k(\pi_{\got{g}/\got{i}}\circ\phi)|_{\wedge^{k-1}\got{g}\wedge\got{i}}
	\end{equation*}
	for $k\geq 1$ and $\phi\in C^k(\got g;\got g)$, 
is an isomorphism of cochain complexes. 
This map is well-defined since $\overline \phi=0$ is equivalent to 
 $\phi(x_1,\dots,x_i,\dots,x_k)\in\got{i}$ whenever $x_i\in\got{i}$ for some $i\in 1,\dots,k$, and so it implies $\pi_{\got g/\got i}\circ \phi\arrowvert_{\wedge^{k-1}\got g\wedge \got i}=0$.
 Moreover, the map $ C^k(\got{g};\got{g})\to C_{\wedge}^{k-1}(\got{g};\got{i}^*\otimes\got{g}/\got{i}), \phi\mto\pi_{\got{g}/\got{i}}\circ\phi|_{\wedge^{k-1}\got{g}\wedge\got{i}}$ is surjective with kernel $ C_{\got{i}}^k(\got{g};\got{g})$ and so it factors as claimed to an isomorphism. Choose $k\geq 1$, $\phi\in C^k(\got g;\got g)$ and $x_1,\dots,x_k\in\got{g}$ and $x_{k+1}\in\got{i}$, then:
\begin{equation*}
	\begin{split}
		&\delta_{\got{g}\rhd\got{i}}^{\Hom}\left((-1)^k\pi_{\got{g}/\got{i}}\circ\phi|_{\wedge^{k-1}\got{g}\wedge\got{i}}\right)(x_1,\dots,x_k, x_{k+1})\\
		&=(-1)^k\sum_{i=1}^{k}(-1)^{i+1}\overline{\ad}_{\overline{x_i}}\left(\pi_{\got{g}/\got{i}}\circ\phi\left(x_1,\dots, \widehat{x_i} ,\dots,x_k,x_{k+1}\right)\right)\\
		&+(-1)^k\sum_{1\leq i<j\leq k}(-1)^{i+j}\pi_{\got{g}/\got{i}}\circ\phi\left([x_i,x_j],x_1,\dots, \widehat{x_i} ,\dots, \widehat{x_j} ,\dots,x_{k},x_{k+1}\right)\\
		&+(-1)^k\sum_{i=1}^k(-1)^i\pi_{\got{g}/\got{i}}\circ\phi\left(x_1,\dots, \widehat{x_i} ,\dots,x_k,[x_i,x_{k+1}]\right)\\
		&=(-1)^k\sum_{i=1}^{k+1}(-1)^{i+1}\pi_{\got{g}/\got{i}}([x_i,\phi(x_1,\dots, \widehat{x_i} ,\dots,x_{k+1})])\\
		&+(-1)^k\sum_{1\leq i<j\leq k+1}(-1)^{i+j}\pi_{\got{g}/\got{i}}\circ\phi([x_i,x_j],x_1,\dots, \widehat{x_i} ,\dots, \widehat{x_j} ,\dots,x_{k+1})\\
		&=(-1)^{k+1}(\pi_{\got{g}/\got{i}}\circ(-\delta^{\ad}_{\got{g}})\phi)|_{\wedge^{k}\got{g}\wedge\got{i}}(x_1,\dots,x_{k+1}).
	\end{split}
\end{equation*}
\end{proof}

\begin{rem}
The complex $ C_{\got{i}}^{\bullet}(\got{g};\got{g})$ appeared in \cite{Nijenhuis-Richardson-Deformations-of-Lie-algebra-structures-67'} as the complex which controls deformations of the Lie algebra $(\got{g}, \mu_{\got{g}})$ such that $\got{i}$ remains an ideal. This remark explains how this deformation problem is related to the one studied in this paper, at the infinitesimal level. A smooth deformation $(\got{i}_t)_{t\in I}$ of an ideal $\got{i}\subset\got{g}$ induces as follows a smooth deformation $(\mu_{\got{g}}^t)_{t\in I}$ of the Lie algebra $(\got{g},\mu_{\got{g}})$ such that $\mu_{\got{g}}^t(\got{g},\got{i})\subset\got{i}$ for all $t\in I$ -- i.e.~the Lie bracket on $\got g$ is deformed such that $\got i$ remains an ideal in the deformed Lie algebras. This smooth deformation is defined, after choosing a smooth family of linear isomorphisms $(\alpha_t)_{t\in I}\in\GL(\got{g})$ such that $\alpha_0=\id$ and $\alpha_t(\got{i})=\got{i}_t$, at each time $t$, by conjugation:
	\begin{equation*}
		\mu_{\got{g}}^t(x,y)\eqdef\alpha_t^{-1}\circ\mu_{\got{g}}\left(\alpha_t(x),\alpha_t(y)\right), \qquad  \forall \ x,y\in\got{g}.
	\end{equation*}
Differentiating the above equation at $t=0$ yields
\begin{equation*}
	\bigg{(}\fracddtz\mu_{\got{g}}^t\bigg{)}(x,y)=-\biggleftpar\fracddtz{\alpha_t}\biggrightpar\circ\mu_{\got{g}}(x,y)+\mu_{\got{g}}\biggleftpar\fracddtz\alpha_t(x),y\biggrightpar+\mu_{\got{g}}\biggleftpar x,\fracddtz{\alpha_t}(y)\biggrightpar
\end{equation*}
for all $x, y\in \got g$.
Assume that $y\eqdef u\in\got{i}$ and apply the canonical projection $\pi_{\got g/\got i}$ to the above equation:
\begin{equation*}
	\pi_{\got{g}/\got{i}}\circ\bigg{(}\fracddtz\mu_{\got{g}}^t\bigg{)}(x,u)=-\pi_{\got{g}/\got{i}}\circ\biggleftpar\fracddtz\alpha_t\biggrightpar([x,u])+\cancel{\pi_{\got{g}/\got{i}}\biggleftbra\fracddtz\alpha_t(x),u\biggrightbra}+\pi_{\got{g}/\got{i}}\biggleftbra x,\fracddtz\alpha_t(u)\biggrightbra.
\end{equation*}
The right-hand side of this equation vanishes since it is  the cocycle condition for $\pi_{\got{g}/\got{i}}\circ\fracddtzs\alpha_t\in C^0(\got{g};\got{i}^*\otimes\got{g}/\got{i})$, see \eqref{equation of differentiation to appear the cocycle for deform of ideals} in the proof of Proposition \ref{def_cocycle_ideal}. Therefore $\pi_{\got{g}/\got{i}}\circ\bigleftpar\fracddtzs\mu^t_{\got{g}}\bigrightpar\big{|}_{\got{g}\wedge\got{i}}=0$, which shows that $\fracddtzs\mu_{\got{g}}^t$ indeed is an element of $ C_{\got{i}}^2(\got{g};\got{g})$.
\end{rem}

\subsubsection{Relation with the deformation cohomology of the ideal as a Lie subalgebra}
Let $\got i$ be an ideal in a Lie algebra $\got g$. Recall that the deformation cochain complex of $\got i$ as a Lie subalgebra of $\got g$ is 
 $(C^\bullet_{\rm def}(\got i\subset \got g), \delta_{\got i}^{\rm Bott})=(C^\bullet(\got{i};\got{g}/\got{i}),\delta_{\tr})$. Let $\iota_{\got i}\colon \got i\to \got g$ be the inclusion. 

\begin{prop}\label{prop_ideal_def_subalgebra_def}
The natural restriction
\begin{equation*}
\res_{\wedge\got{i}}\colon 	 C^\bullet_{\wedge}(\got{g};\got{i}^*\otimes\got{g}/\got{i})\longtwoheadrightarrow C^{\bullet}(\got{i};\got{g}/\got{i})[1], 
\end{equation*}
\begin{equation}\label{dgla morphism from ideals to subalgebras}
C^k_{\wedge}(\got{g};\got{i}^*\otimes\got{g}/\got{i}) \ni \phi\mto (-1)^{k+1}\phi|_{\wedge^{k+1}\got{i}},
\end{equation}
for $k\geq 0$ 
is a cochain map and $$H^0(\res_{\wedge\got{i}})\colon H_{\wedge}^0(\got{g};\got{i}^*\otimes\got{g}/\got{i})\rightarrow H_{\defor}^1(\got{i}\subset\got{g})$$ is injective. Furthermore, there is a commutative diagram of short exact sequences:
\begin{equation}\label{com_diag_ideal_subalgebra}
\begin{tikzcd}
	 C^{\bullet}(\got{g}/\got{i};\got{g}/\got{i})[1] \ar[r, hookrightarrow] \ar[d, hookrightarrow]&  C^{\bullet}(\got{g};\got{g}/\got{i})[1]  \ar[d, equal] \ar[r, twoheadrightarrow, "\Pi_{\wedge}"] &  C_{\wedge}^{\bullet}(\got{g};\got{i}^*\otimes\got{g}/\got{i}) \ar[d, twoheadrightarrow, "\res_{\wedge\got{i}}"]\\
	\Ker^{\bullet}(\iota_{\got i}^*)[1] \ar[r, hookrightarrow] &  C^{\bullet}(\got{g};\got{g}/\got{i})[1] \ar[r, twoheadrightarrow, "\iota_{\got{i}}^*"] &  C^{\bullet}(\got{i};\got{g}/\got{i})[1]
\end{tikzcd}
\end{equation}
where $\Ker(\iota_{\got i}^*)=\{\phi\colon \wedge^{\bullet}\got{g}\to\got{g}/\got{i} \ | \ \phi|_{\wedge^{\bullet}\got{i}}=0\}$, and the top left and left  inclusions are just given by the canonical inclusion $\wedge^\bullet (\got g/\got i)^*\simeq \wedge^\bullet \got i^\circ \hookrightarrow \wedge^\bullet \got g^*$.
\end{prop}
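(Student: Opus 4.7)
The plan is to prove the three assertions in an order that avoids redundant work: first establish the commutative diagram \eqref{com_diag_ideal_subalgebra} at the level of underlying graded vector spaces, then read off the cochain-map property of $\res_{\wedge\got{i}}$ from the right square by exploiting the surjectivity of $\Pi_{\wedge}$, and finally settle the injectivity of $H^0(\res_{\wedge\got{i}})$ by direct inspection in degree $0$. For the diagram itself, the left square merely compares two descriptions of the same subspace: the top inclusion identifies $(\got{g}/\got{i})^*$ with $\got{i}^\circ \subset \got{g}^*$, so its image is $\wedge^\bullet \got{i}^\circ \otimes \got{g}/\got{i}$, which is precisely $\Ker^\bullet(\iota_{\got{i}}^*)$. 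For the right square I would verify $\res_{\wedge\got{i}}\circ\Pi_{\wedge}=\iota_{\got{i}}^*$ by a short sign calculation on $\psi\in C^{k+1}(\got{g};\got{g}/\got{i})$: the sign $(-1)^{k+1}$ from $\Pi_{\wedge}$ is cancelled by the sign $(-1)^{k+1}$ introduced by $\res_{\wedge\got{i}}$ on elements of degree $k$, leaving $\psi|_{\wedge^{k+1}\got{i}}=\iota_{\got{i}}^*\psi$. Exactness of the top row is Proposition \ref{lemma proving the s.e.s. relating deform. of ideals, canonical projection, and quotient Lie algebra}, and the bottom row is tautologically exact.

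With the right square in hand, I would deduce that $\res_{\wedge\got{i}}$ is a cochain map from the cochain-map property of its two companions. Because $\got{i}$ is an ideal, $\iota_{\got{i}}^*\ad^{\got{g}/\got{i}}=\tr$ (since $[u,y]\in\got{i}$ whenever $u\in\got{i}$ and $y\in\got{g}$), so $\iota_{\got{i}}^*$ intertwines $\delta_{\pi_{\got{g}/\got{i}}}$ with $\delta_{\tr}$ in the usual way; and $\Pi_{\wedge}$ is a surjective cochain map by Proposition \ref{the lemma where we defined the cochain map from def. complex of can projection to the def. complex of ideals}. For any $\phi\in C^\bullet_{\wedge}$, I pick $\psi$ with $\Pi_{\wedge}(\psi)=\phi$ and push $\delta_{\got{g}\rhd\got{i}}^{\Hom}\phi$ through $\res_{\wedge\got{i}}\circ\Pi_{\wedge}=\iota_{\got{i}}^*$; the surjectivity of $\Pi_{\wedge}$ absorbs the freedom in the choice of $\psi$ and yields $\res_{\wedge\got{i}}\circ\delta_{\got{g}\rhd\got{i}}^{\Hom}=-\delta_{\tr}\circ\res_{\wedge\got{i}}$, the uniform minus sign being the one attached to the shift-by-$1$ on both target complexes.

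For the injectivity of $H^0(\res_{\wedge\got{i}})$, the complex $C^\bullet_{\wedge}$ starts in degree $0$, so $H^0_{\wedge}$ is simply $Z^0_{\wedge}$, the space of $\got{g}$-equivariant maps $\phi\colon\got{i}\to\got{g}/\got{i}$. Such a $\phi$ automatically satisfies $\phi([u,v])=\ad^{\got{g}/\got{i}}_u\phi(v)=0$ for $u,v\in\got{i}$, hence sits inside $Z^1_{\defor}(\got{i}\subset\got{g})$; and as the Bott representation vanishes, $\delta_{\tr}$ is zero on $C^0(\got{i};\got{g}/\got{i})$, so $H^1_{\defor}(\got{i}\subset\got{g})=Z^1_{\defor}(\got{i}\subset\got{g})$. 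In degree $0$ the map $\res_{\wedge\got{i}}$ is just $\phi\mapsto-\phi$ on $\Hom(\got{i},\got{g}/\got{i})$, whence the induced map on $H^0$ is (up to the sign) the inclusion $Z^0_{\wedge}\hookrightarrow Z^1_{\defor}$, evidently injective.

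The main — really the only delicate — step is the sign bookkeeping in $\res_{\wedge\got{i}}\circ\Pi_{\wedge}=\iota_{\got{i}}^*$: each vertical map in the right square carries its own degree-dependent sign on top of the shift-by-$1$ convention, and one has to confirm that these signs cancel exactly as stated so that the target differential is genuinely $-\delta_{\tr}$. Once this is pinned down, every other step is essentially formal.
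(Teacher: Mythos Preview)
Your approach is essentially the same as the paper's: you verify $\res_{\wedge\got{i}}\circ\Pi_{\wedge}=\iota_{\got{i}}^*$, check that $\iota_{\got{i}}^*$ is a cochain map (using that the Bott representation is trivial), and then use the surjectivity of $\Pi_{\wedge}$ to conclude that $\res_{\wedge\got{i}}$ is a cochain map. Your injectivity argument for $H^0(\res_{\wedge\got{i}})$ is also the paper's argument, just phrased as ``$B^1_{\defor}=0$, hence $H^1_{\defor}=Z^1_{\defor}$'' rather than as ``any coboundary $\delta_{\got{i}}^{\Bott}(\bar{x})$ vanishes''; these are the same observation.

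There is one genuine slip in your treatment of the left square: you assert that the image $\wedge^\bullet\got{i}^\circ\otimes\got{g}/\got{i}$ of the top-left inclusion is \emph{precisely} $\Ker^\bullet(\iota_{\got{i}}^*)$. This is false in degrees $\geq 2$. For instance, in degree $2$ one has
\[
\Ker^2(\iota_{\got{i}}^*)=\{\phi\in\wedge^2\got{g}^*\otimes\got{g}/\got{i}\mid \phi|_{\wedge^2\got{i}}=0\}=\got{i}^\circ\wedge\got{g}^*\otimes\got{g}/\got{i},
\]
which strictly contains $\wedge^2\got{i}^\circ\otimes\got{g}/\got{i}$ whenever $\got{i}\neq 0$ and $\got{i}\neq\got{g}$. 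The diagram only claims an \emph{inclusion} on the left (note the hooked arrow), and commutativity of the left square is immediate because both routes are the same inclusion of $\wedge^\bullet\got{i}^\circ\otimes\got{g}/\got{i}$ into $\wedge^\bullet\got{g}^*\otimes\got{g}/\got{i}$. So your proof goes through once you replace ``is precisely'' by ``is contained in''; the error is in the description, not in the logic.
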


Note that $C^0_{\wedge}(\got g; \got i^*\otimes \got g/\got i)=\got i^*\otimes \got g/\got i=C^1(\got i, \got g/\got i)$, so $\res_{\wedge\got i}$ is minus the identity in degree $0$. However, because the cocycle conditions on $C^0_{\wedge}(\got g; \got i^*\otimes \got g/\got i)$ and $C^1(\got i, \got g/\got i)$ are very different, $H^0(\res_{\wedge\got i})$ is in general not surjective and so not an isomorphism in cohomology.

Recall as well that in \eqref{com_diag_ideal_subalgebra}, not the full complexes $C^\bullet(\got g, \got g/\got i)[1]$, etc, are considered, but only their truncations to non-negative degrees, as the top-right term does not have negative degrees. The map $\Pi_\wedge$ is \emph{not} defined on $(C^\bullet(\got g; \got g/\got i)[1])_{-1}=C^0(\got g; \got g/\got i)=\got g/\got i$,
and also e.g.~$(C^\bullet(\got g/\got i; \got g/\got i)[1])_{-1}=C^0(\got g/\got i; \got g/\got i)=\got g/\got i$ is \emph{not} a vector subspace of $\Ker^0(\iota_{\got i}^*)$ which is $\{0\}$ by definition. 

\begin{proof}[Proof of Proposition \ref{prop_ideal_def_subalgebra_def}]
Compute using Proposition \ref{the lemma where we defined the cochain map from def. complex of can projection to the def. complex of ideals}
and $\res_{\wedge\got{i}}\circ\Pi=\iota_{\got i}^*$
\[
\res_{\wedge\got{i}}\circ\delta_{\got{g}\rhd\got{i}}^{\Hom}\circ \Pi=\res_{\wedge\got{i}}\circ\Pi\circ (-\delta_{\pi_{\got g/\got i}})
=\iota_{\got i}^*\circ (-\delta_{\pi_{\got g/\got i}}).
\]
On $\phi\in C^k(\got g, \got g/\got i)$ and $u_1,\ldots, u_{k+1}\in \got i$, this is 
\[-\sum_{i<j}\phi\left([u_i,u_j], u_1, \ldots, \widehat{u_i},\ldots, \widehat{u_j}, \ldots, u_{k+1}\right),
\]
which equals 
\[ (-\delta_{\got i}^{\rm Bott}(\iota_{\got i}^*\phi))(u_1,\ldots, u_{k+1}).
\]
Hence 
\[\res_{\wedge\got{i}}\circ\delta_{\got{g}\rhd\got{i}}^{\Hom}\circ \Pi
=\iota_{\got i}^*\circ (-\delta_{\pi_{\got g/\got i}})=-\delta_{\got i}^{\rm Bott}\circ \iota_{\got i}^*=-\delta_{\got i}^{\rm Bott}\circ \res_{\wedge\got{i}}\circ\Pi
\]
Since $\Pi$ is surjective on its image $C^\bullet_{\wedge}(\got{g};\got{i}^*\otimes\got{g}/\got{i})$, this shows that $\res_{\wedge\got{i}}\circ\delta_{\got{g}\rhd\got{i}}^{\Hom}=-\delta_{\got{i}}^{\Bott}\circ\res_{\wedge\got{i}}$ on $C^\bullet_{\wedge}(\got{g};\got{i}^*\otimes\got{g}/\got{i})$, i.e.~that $\res_{\wedge\got{i}}$ in \eqref{dgla morphism from ideals to subalgebras} is a cochain map.

If $\phi\colon \got{i}\to\got{g}/\got{i}$ is an infinitesimal deformation of $\got{i}\lhd\got{g}$, then $H^0(\res_{\wedge\got{i}})[\phi]=[-\phi]\in H^1_{\defor}(\got{i}\subset\got{g})$, 
where $[\phi]=\phi$ on the left-hand side is the cohomology class of $\phi$ as a closed element of $C^0_\wedge(\got g, \got i^*\otimes\got g/\got i)=\wedge^0\got g^*\otimes \got i^*\otimes \got g/\got i$, and $[-\phi]$ on the right-hand side is the cohomology class of $-\phi$ as an element of $C^1(\got i, \got g/\got i)$. Assume that the image $H^1(\got i, \got g/\got i)\ni [-\phi]=0$, then there exists $\overline{x}\in\got{g}/\got{i}$, i.e.~the class in $\got g/\got i$ of some $x\in \got g$, such that $-\phi=\pi_{\got{g}/\got{i}}\circ[\cdot, x]\colon \got{i}\to\got{g}/\got{i}$. But this vanishes because $\got i$ is an ideal in $\got g$. As a consequence, $\phi=0$ and so $H^0(\res_{\wedge\got{i}})$ is indeed injective.

\medskip

Next note that \[\Ker^\bullet(\iota_{\got i}^*)[1]=\{\phi\colon \wedge^{k}\got{g}\to\got{g}/\got{i} \ | \ k\geq 1 \text{ and } \phi|_{\wedge^{k}\got{i}}=0\}=\bigoplus_{p\geq 0}\left(\got i^\circ\wedge \bigwedge^p\got g^*\right)\otimes \got g/\got i\] naturally contains $ C^{\bullet\geq 1}(\got{g}/\got{i};\got{g}/\got{i})\simeq \wedge^{\bullet\geq 1} \got i^\circ\otimes \got g/\got i$.
 The commutativity of  \eqref{com_diag_ideal_subalgebra} is easy to check: the right-hand square commutes since $\res_{\wedge\got{i}}\circ\Pi=\iota_{\got i}^*$ and the left-hand square is just the fact that via the identification $\wedge^{\bullet\geq 1}(\got g/\got i)^*\otimes \got g/\got i\simeq \wedge^{\bullet\geq 1}\got i^\circ\otimes \got g/\got i$, the inclusion of $\wedge^{\bullet\geq 1}(\got g/\got i)^*\otimes \got g/\got i$ has image in $\Ker^\bullet(\iota_{\got i}^*)$.
\end{proof}

\subsubsection{Bringing everything together}
Consider the following diagram.
\begin{equation}\label{commutative diagram where the def. complex of an ideal lives}
		\begin{tikzcd}[row sep=small, column sep=large]
			 C_{\got{i}}^{\bullet}(\got{g};\got{g})[1] \ar[r, hookrightarrow] \ar[dd, swap, "\overline{\pi_*}"] &  C^\bullet(\got{g};\got{g})[1] \ar[r, twoheadrightarrow, "p_{\Coker}"] \ar[dd, swap, "(\pi_{\got{g}/\got{i}})_*"] & \frac{ C^\bullet(\got{g};\got{g})[1]}{ C_{\got{i}}^{\bullet}(\got{g};\got{g})[1]} \ar[dd, "\simeqd"]\\ \\
			 C^\bullet(\got{g}/\got{i};\got{g}/\got{i})[1] \ar[r, hookrightarrow] &  C^{\bullet}(\got{g};\got{g}/\got{i})[1] \ar[r, "\Pi_{\wedge}", twoheadrightarrow] &  C_{\wedge}^{\bullet}(\got{g};\got{i}^*\otimes\got{g}/\got{i})
		\end{tikzcd}
	\end{equation}
	The short exact sequence at the bottom of the diagram was already found in \eqref{com_diag_ideal_subalgebra} and the map $\overline{\pi_*}$ was defined in \eqref{def_pi_star}. The isomorphism on the right-hand side is given by the map defined in \eqref{iso_coker_pi_complex}.
	The map $(\pi_{\got g/\got i})_*\colon C^\bullet (\got g, \got g)\to C^\bullet(\got g, \got g/\got i)$ sends $\omega\in \wedge^\bullet\got g^*\otimes \got g$ to 
	$\pi_{\got g/\got i}\circ \omega\in \wedge^\bullet\got g^*\otimes \got g/\got i$. It is easy to check that this is a cochain map
	\[ \left(C^\bullet (\got g, \got g), \delta^{\ad}_{\got g}
	\right) \to \left(C^\bullet(\got g, \got g/\got i),\delta^{\ad^{\got{g}/\got{i}}}_{\got g}
	\right).
	\]
	The square on the left-hand side of \eqref{commutative diagram where the def. complex of an ideal lives} commutes by definition of $\overline{\pi_*}$ and $(\pi_{\got g/\got i})_*$, and the square on the right-hand side commutes as well by definition of the involved linear maps.

	\begin{prop}\label{prop. that the def. cohom. of an ideal fits in a commutative diagram of l.e.s.}
	The cohomology $H_{\wedge}^k(\got{g};\got{i}^*\otimes\got{g}/\got{i})$ fits into two long exact sequences, which are related by the following commutative diagram (starting at $k=1$).
	\begin{equation}\label{commutative diagram where the def. cohomology of an ideal lives}
		\begin{tikzcd}[row sep=small, column sep=small]
			\cdots \ar[r] & H_{\got{i}}^{k}(\got{g};\got{g}) \ar[r] \ar[dd, "H^{k}(\overline{\pi_*})"] & H^k(\got{g};\got{g})\ar[r] \ar[dd, "H^{k}((\pi_{\got{g}/\got{i}})_*)"]& H^k_{\Coker}(\got{g};\got{g}) \ar[r, "{\partial_{\got g}^{k-1}}"]\ar[dd, "\simeqd"] & H^{k+1}_{\got{i}}(\got{g};\got{g}) \ar[r]  \ar[dd, "H^{k+1}(\overline{\pi_*})"]& \cdots\\
			\\
			\cdots \ar[r] & H^k(\got{g}/\got{i};\got{g}/\got{i}) \ar[r] & H^k(\got{g};\got{g}/\got{i}) \ar[r] & H_{\wedge}^{k-1}(\got{g};\got{i}^*\otimes\got{g}/\got{i}) \ar[r,  "{\partial^{k-1}_{\got{g}/\got{i}}}" ] & H^{k+1}(\got{g}/\got{i};\got{g}/\got{i}) \ar[r] & \cdots 
		\end{tikzcd}
	\end{equation}
	with the coboundary operators $\partial^\bullet_{\got g}$ and $\partial^\bullet_{\got g/\got i}$  defined as usual (see e.g.~\cite[I.\S 2]{BoTu82}) by
	\[ \partial^{k-1}_{\got g}[\overline \omega]=\left[\delta_{\got g}^{\rm ad}(\omega)\right]
	\]
	for all $k\geq 1$ and all $\omega\in C^{k}(\got g; \got g)$ such that $\delta_{\got g}^{\rm ad}\omega$ lies in $C^{k+1}_{\got i}(\got g; \got g)$, i.e.~such that $\overline\omega$ is a closed element of $\frac{C^{k}(\got g; \got g)}{C^{k}_{\got i}(\got g; \got g)}$, 
	and 
	\[ \partial^{k-1}_{\got g/\got i}[\phi]=\left[\delta_{\pi_{\got g/\got i}}\tilde \phi\right]
	\]
	for all $k\geq 1$ and all closed elements $\phi=\tilde \phi\arrowvert_{\wedge^{k-1}\got g^*\wedge \got i}\in C_{\wedge}^{k-1}(\got g; \got i^*\otimes\got g/\got i)$, i.e.~with 
	$\tilde \phi\in \wedge^{k}\got g^*\otimes \got g/\got i$ satisfying $\delta_{\pi_{\got g/\got i}}\tilde \phi\arrowvert_{\wedge^{k}\got g\wedge \got i}=0$, 
	i.e.~$\delta_{\pi_{\got g/\got i}}\tilde \phi\in \wedge^{k+1}\got i^\circ\otimes \got g/\got i=\wedge^{k+1}(\got g/\got i)^*\otimes \got g/\got i$.
\end{prop}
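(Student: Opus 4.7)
The plan is to deduce the proposition from the commutative diagram of short exact sequences of cochain complexes \eqref{commutative diagram where the def. complex of an ideal lives} by applying the long exact sequence in cohomology and exploiting its naturality. Both rows of that diagram are short exact sequences of cochain complexes: the top by the very definition of the cokernel, the bottom by Proposition \ref{lemma proving the s.e.s. relating deform. of ideals, canonical projection, and quotient Lie algebra}. Each therefore produces a long exact sequence in cohomology, yielding the two horizontal sequences of \eqref{commutative diagram where the def. cohomology of an ideal lives}.

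Next, I would invoke naturality of the snake/zig-zag construction. All three vertical maps in \eqref{commutative diagram where the def. complex of an ideal lives} are cochain maps: the leftmost $\overline{\pi_*}$ by Proposition \ref{proposition/definition for/of the complex related to deform of ideals in Nij-Rich}, the middle $(\pi_{\got g/\got i})_*$ by a direct computation using $\pi_{\got g/\got i}^*\overline{\ad}=\ad^{\got g/\got i}$, and the right-hand $\overline{\Pi}$, which is in fact an \emph{isomorphism} of cochain complexes by the same proposition. Together with the commutativity of the two squares in \eqref{commutative diagram where the def. complex of an ideal lives}, naturality of the connecting homomorphism then produces the commutative ladder \eqref{commutative diagram where the def. cohomology of an ideal lives}, with the rightmost vertical arrow an isomorphism since it is induced by an isomorphism at the chain level.

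Finally, I would unwind the standard zig-zag to confirm the explicit formulas for the connecting homomorphisms. For $\partial_{\got g}^{k-1}$, a class $[\overline{\omega}]\in H^k_{\Coker}(\got g;\got g)$ is represented by some $\omega\in C^k(\got g;\got g)$; the fact that $\overline\omega$ is closed in the quotient is precisely the statement that $\delta^{\ad}_{\got g}(\omega)\in C^{k+1}_{\got i}(\got g;\got g)$, giving $\partial^{k-1}_{\got g}[\overline{\omega}]=[\delta^{\ad}_{\got g}(\omega)]$ in $H^{k+1}_{\got i}(\got g;\got g)$. The case of $\partial^{k-1}_{\got g/\got i}$ is entirely analogous via $\Pi_\wedge$: one lifts $\phi$ to some $\tilde\phi\in \wedge^{k}\got g^*\otimes\got g/\got i$, and the cocycle condition forces $\delta_{\pi_{\got g/\got i}}\tilde\phi\in \wedge^{k+1}\got i^\circ\otimes\got g/\got i\simeq C^{k+1}(\got g/\got i;\got g/\got i)$. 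I do not anticipate any serious obstacles: the content is entirely formal once the preceding constructions are in place, and the only mild care needed is in tracking the signs arising from the shift $[1]$ and from the definitions of $\Pi$ and $\overline{\Pi}$, which have already been absorbed into the setup.
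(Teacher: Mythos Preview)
Your proposal is correct and follows essentially the same approach as the paper: the paper isolates the naturality of the connecting homomorphism as a separate lemma (a morphism of short exact sequences of cochain complexes induces a commutative ladder of long exact sequences) and then applies it directly to the diagram \eqref{commutative diagram where the def. complex of an ideal lives}. Your sketch does the same, just without stating that lemma separately; the verification of the explicit formulas for $\partial^\bullet_{\got g}$ and $\partial^\bullet_{\got g/\got i}$ via the zig-zag is likewise identical in spirit.
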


The proof of Proposition \ref{prop. that the def. cohom. of an ideal fits in a commutative diagram of l.e.s.} is just a straightforward application of the following lemma to the diagram in \eqref{commutative diagram where the def. complex of an ideal lives}.

\begin{lem}
Let 
\begin{equation*}
		\begin{tikzcd}[row sep=small, column sep=large]
			 (A_1^\bullet, d_{A,1}) \ar[r, hookrightarrow, "\phi_1"] \ar[dd, swap, "F_A"] & (B_1^\bullet, d_{B,1})  \ar[r, twoheadrightarrow, "\psi_1"] \ar[dd, swap, "F_B"] &(C_1^\bullet, d_{C,1}) \ar[dd, "F_C"]\\ \\
			  (A_2^\bullet, d_{A,2}) \ar[r, hookrightarrow, "\phi_2"] &  (B_2^\bullet, d_{B,2})  \ar[r, "\psi_2", twoheadrightarrow] &  	(C_2^\bullet, d_{C,2})	\end{tikzcd}
	\end{equation*}
	be a commutative diagram of cochain complexes, with the two horizontal sequences being exact.
	
	Then the maps $[F_A]$, $[F_B]$, $[F_C]$ defined in cohomology by $F_A$, $F_B$ and $F_C$ intertwine the long exact sequences in cohomology induced by the short exact sequences $(A_i, d_{A,i})\hookrightarrow (B_i, d_{B,i}) \twoheadrightarrow (C_i, d_{C,i})$ for $i=1,2$. That is, the following diagram of long exact sequences in cohomology commutes, with $k\geq 0$.
	\begin{equation*}
		\begin{tikzcd}[row sep=small, column sep=small]
			\cdots \ar[r,"{\partial_{1}^{k-1}}"] & H^k(A_1) \ar[r,"{[\phi_1]}"] \ar[dd, "{[F_A]}"] & H^k(B_1)\ar[r,"{[\psi_1]}"] \ar[dd,"{[F_B]}"]& H^k(C_1) \ar[r,"{\partial_1^{k}}"]\ar[dd,"{[F_C]}"] & H^{k+1}(A_1) \ar[r]  \ar[dd, "{[F_A]}"]& \cdots\\
			\\
			\cdots \ar[r,"{\partial^{k-1}_{2}}"] & H^k(A_2) \ar[r,"{[\phi_2]}"] & H^k(B_2) \ar[r,"{[\psi_2]}"] & H^k(C_2) \ar[r,"{\partial^{k}_{2}}"] & H^{k+1}(A_1) \ar[r] & \cdots 
		\end{tikzcd}
	\end{equation*}
\end{lem}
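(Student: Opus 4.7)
This is the classical naturality of the connecting homomorphism in the long exact sequence associated to a short exact sequence of cochain complexes, so the plan is to prove it by the usual parallel snake-lemma diagram chase. First I would recall the construction of $\partial_i^k\colon H^k(C_i)\to H^{k+1}(A_i)$. Given a cocycle $c\in C_i^k$ representing $[c]\in H^k(C_i)$, surjectivity of $\psi_i$ yields a lift $b\in B_i^k$ with $\psi_i(b)=c$. Then $\psi_i(d_{B,i}(b))=d_{C,i}(c)=0$, so by exactness $d_{B,i}(b)=\phi_i(a)$ for a unique $a\in A_i^{k+1}$, which is automatically a cocycle, and one sets $\partial_i^k[c]\eqdef [a]$. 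The independence of this on the choices of $b$ and of the representative $c$ is standard and already guaranteed by the well-definedness of the long exact sequence itself.

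The commutativity of the two squares not involving $\partial$ is immediate: since $F_B\circ\phi_1=\phi_2\circ F_A$ and $F_C\circ\psi_1=\psi_2\circ F_B$ hold at the level of cochain complexes by assumption, the same equalities descend to cohomology, giving $[F_B]\circ[\phi_1]=[\phi_2]\circ[F_A]$ and $[F_C]\circ[\psi_1]=[\psi_2]\circ[F_B]$.

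For the square involving the connecting maps, the plan is to run the snake construction simultaneously on both rows, using at each step the commutativity of the corresponding square of the given diagram of cochain complexes. Starting from $[c_1]\in H^k(C_1)$, pick a lift $b_1\in B_1^k$ with $\psi_1(b_1)=c_1$ and $a_1\in A_1^{k+1}$ with $\phi_1(a_1)=d_{B,1}(b_1)$, so that $\partial_1^k[c_1]=[a_1]$. Then $F_B(b_1)\in B_2^k$ satisfies $\psi_2(F_B(b_1))=F_C(\psi_1(b_1))=F_C(c_1)$, i.e.~it is a lift of the representative $F_C(c_1)$ of $[F_C]([c_1])$; and $F_A(a_1)\in A_2^{k+1}$ satisfies $\phi_2(F_A(a_1))=F_B(\phi_1(a_1))=F_B(d_{B,1}(b_1))=d_{B,2}(F_B(b_1))$, using respectively the commutativity of the left square of the diagram and the fact that $F_B$ is a cochain map. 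Thus $\partial_2^k\bigleftpar [F_C]([c_1])\bigrightpar=[F_A(a_1)]=[F_A]\bigleftpar\partial_1^k[c_1]\bigrightpar$, which is the required identity. The only subtlety, and hence the ``main obstacle'', is to keep track of the choices in the snake construction; but as noted above this is handled once and for all by well-definedness of $\partial_2^k$, so the parallel chase above is conclusive.
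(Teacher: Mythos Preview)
Your proof is correct and follows essentially the same approach as the paper: both recall the snake-lemma construction of $\partial_i^k$ and then verify $\partial_2^k\circ[F_C]=[F_A]\circ\partial_1^k$ by pushing a chosen lift $b_1$ and preimage $a_1$ through $F_B$ and $F_A$ using the commutativity of the original diagram and the fact that $F_B$ is a cochain map. The paper's version omits the explicit remark about the two non-connecting squares, but otherwise the argument is the same diagram chase.
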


\begin{proof}
Recall that $\delta_i^{k}$ is defined as follows for $k\geq 0$ and $i=1,2$, see \cite[I.\S 2]{BoTu82}
\[ \partial_i^k[c]=[a]
\]
for $c$ a closed element of $C_i^k$, $b\in B_i^k$ such that $\psi_i(b)=c$  and $a\in A_i^{k+1}$ the (necessarily closed) element such that 
$\phi_i(a)=d_{B,i}(b)$. 

Choose $k\geq 0$ and $c\in C_1^k$ a closed element. Choose as above $b\in B_1^k$ such that $\psi_1(b)=c$ and $a\in A^{k+1}_1$ such that $\phi_1(a)=d_{B,1}(b)$. Then 
\[ \phi_2(F_A(a))=F_B(\phi_1(a))=F_B(d_{B,1}(b))=d_{B,2}(F_B(b))
\]
and 
\[ \psi_2(F_B(b))=F_C(\psi_1(b))=F_C(c).
\]
This shows that 
\[\partial_2^k[F_C(c)]=[F_A(a)],
\]
and so, since $[F_A(a)]=[F_A]([a])=([F_A]\circ \partial_1^k)[c]$ and  the closed element $c\in C^k_1$ was arbitrary, 
\[ \partial_2^k\circ [F_C]=[F_A]\circ\partial_1^k.
\]
\end{proof}

\section{Deformations of a Lie ideal as Maurer-Cartan elements}\label{voronov_ideals}

While for investigating the infinitesimal deformation theory of ideals, no choice of complement is needed, for finding the Lie structure on the deformation complex $ C^{\bullet}(\got{g};\got{i}^*\otimes\got{g}/\got{i})$ it is crucial to fix a complement $\got i^c$ of the ideal $\got{i}\lhd\got{g}$ in $\got g$. That is, $\got i^c$ is canonically isomorphic to $\got{g}/\got{i}$ as a vector space. 
The projections associated to the choice of complement $\got i^c\subset\got{g}$ are denoted by $\pr_{\got{i}}\colon \got{g}\twoheadrightarrow\got{i}$ and $\pr_{\got i^c}\colon \got{g}\twoheadrightarrow \got i^c$. The inclusions $\got i\hookrightarrow \got g$ and $\got i^c\hookrightarrow \got g$ are denoted by $\iota_{\got i}$ and $\iota_{\got i^c}$, respectively.

Consider small deformations of $\got{i}\lhd\got{g}$ in the sense of the following definition.

\begin{defin}
	Given a $k$-dimensional Lie ideal $\got{i}\lhd{\got{g}}$ in a Lie algebra $(\got g, [\cdot\,,\cdot])$ and a vector space complement $\got i^c\subset\got{g}$. Then a $k$-dimensional subspace $\got{i}'\subset\got{g}$ is called a \textbf{small deformation of $\got{i}\lhd\got{g}$} if $\got{g}=\got{i}'\oplus \got i^c$ and $\got{i}'$ is an ideal in $\got{g}$.
\end{defin}

Recall that the choice of ${\got i^c}$ defines a smooth chart of $\Gr_k(\got g)$ around $\got i$. Its elements are the $k$-dimensional subspaces of $\got g$ that are complementary to ${\got i^c}$.
Therefore, small deformations of $\got{i}\lhd{\got{g}}$ are the subspaces of the vector space $\got{g}$ which lie in the chart domain $U_{\got i^c}\simeq\Hom(\got{i},\got i^c)$ around $\got{i}\in\Gr_{k}(\got{g})$ and are, in addition, ideals of the Lie algebra $(\got{g},[\cdot,\cdot])$.

\subsection{The Voronov dataset associated to an ideal in a Lie algebra}
This section begins with constructing a dgL$[1]$a structure on $C^\bullet(\got g; \got i^*\otimes \got g/\got i)$ associated to an ideal $\got i$ in a Lie algebra $\got g$, that will then be the dgL$[1]$a controlling the deformations of $\got i$ in $\got g$.
\begin{prop}\label{lemma for V-data of an ideal} Let $\got g$ be a Lie algebra and let $\got i$ be an ideal in it. Choose a (vector space) complement ${\got i^c}$ for $\got i$ in $\got g$.
The following quintuple is then a Voronov-dataset:
\begin{enumerate}
	\item The graded Lie algebra $\mbf{g}\eqdef\bigleftpar C^{\bullet}(\got{g};\got{g})[1]\oplus C^{\bullet}(\got{g};\got{gl}(\got{g})), \llbra\cdot,\cdot\rrbra\bigrightpar$ defined in Section \ref{graded_LA_LA+rep}.
\item $\got{a}\eqdef C^{\bullet}(\got{g};\got{i}^*\otimes \got i^c)$ with the inclusion
\[\mbf{I}\colon \got{a}\hookrightarrow\mbf{g}, \quad \psi\mto\mbf{I}(\psi)(x_1,\dots,x_{k},y)\eqdef\bigleftpar0, \iota_{\got i^c}\bigleftpar\psi(x_1,\dots,x_{k},\pr_{\got{i}}(y))\bigrightpar\bigrightpar,\]
\item $\mbf{P}\colon \mbf{g}\twoheadrightarrow\got{a}$ defined by $(\phi,\psi)\mto\mbf{P}(\phi,\psi)(x_1,\dots,x_k,u)\eqdef\pr_{\got i^c}\bigleftpar\psi\bigleftpar x_1,\dots,x_k,\iota_{\got{i}}(u)\bigrightpar\bigrightpar$,
\item $\Theta\eqdef\mu_{\got{g}}+\ad^{\got{g}}$.
\end{enumerate}
\end{prop}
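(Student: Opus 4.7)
The plan is to verify, in turn, the four defining conditions of a Voronov-dataset. Condition (1) is immediate since the graded Lie algebra structure on $\mathbf{g}$ is precisely the one constructed in Section \ref{graded_LA_LA+rep} applied to $V=W=\got{g}$. I would begin by checking that $\mathbf{I}$ is degree-preserving (an element of $\wedge^k\got{g}^*\otimes\got{i}^*\otimes\got{i}^c$ lands entirely in the second summand $\wedge^k\got{g}^*\otimes\got{gl}(\got{g})$, which is the degree $k$ part of $C^\bullet(\got{g};\got{gl}(\got{g}))$), that $\mathbf{P}$ is likewise degree-preserving, and that $\mathbf{P}\circ\mathbf{I}=\id_{\got{a}}$ — the last being a one-line check from $\pr_{\got{i}}\circ\iota_{\got{i}}=\id_{\got{i}}$ and $\pr_{\got{i}^c}\circ\iota_{\got{i}^c}=\id_{\got{i}^c}$, so $\mathbf{P}$ is genuinely a projection.

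The abelianness of $\mathbf{I}(\got{a})$ (condition (2)) rests on a key structural observation: every $\mathbf{I}(\psi)$ is an $\End(\got{g})$-valued cochain whose pointwise values factor as $\iota_{\got{i}^c}\circ(-)\circ\pr_{\got{i}}$, so that their image lies in $\got{i}^c$ while they annihilate $\got{i}^c$. Applying formula \eqref{eq3_LALArep}, the bracket $\llbra \mathbf{I}(\psi_1),\mathbf{I}(\psi_2)\rrbra$ is a signed sum of compositions that feed an element of $\got{i}^c$ into a map vanishing on $\got{i}^c$; every such term is zero.

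For condition (3), I would first characterize $\ker\mathbf{P}$ as the set of pairs $(\phi,\psi)$ with $\phi$ arbitrary and $\psi\in\wedge^\bullet\got{g}^*\otimes\got{gl}(\got{g},\got{i})$, where $\got{gl}(\got{g},\got{i})\subseteq \got{gl}(\got{g})$ is the Lie subalgebra of endomorphisms preserving $\got{i}$ from Section \ref{def_subrep_adjoint_subsec}. Then I would go through the three types of brackets from \eqref{eq1_LALArep}–\eqref{eq3_LALArep}: the purely first-component bracket $\llbra\phi_1,\phi_2\rrbra$ lies in the first component and is automatically in $\ker\mathbf{P}$; the purely second-component bracket $\llbra\psi_1,\psi_2\rrbra$ remains $\got{gl}(\got{g},\got{i})$-valued since that Lie subalgebra is closed under composition; and for the mixed bracket $-\psi\circ\phi$, each term applies $\psi$ last, so the resulting endomorphism evaluated on $u\in\got{i}$ lands in $\got{i}$ by the preserving property of $\psi$.

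The conceptual heart of the proof lies in condition (4). The degree count $|\Theta|=1$ is clear from both summands. That $\llbra\Theta,\Theta\rrbra=0$ is precisely the equivalence established at the end of Section \ref{graded_LA_LA+rep}: it is equivalent to $\mu_{\got{g}}$ being a Lie bracket and $\ad^{\got{g}}$ a representation of $(\got{g},\mu_{\got{g}})$ on itself, both true by construction. Finally, since $\mathbf{P}$ ignores the first component, $\Theta\in\ker\mathbf{P}$ reduces to the identity $\mathbf{P}(0,\ad^{\got{g}})(x,u)=\pr_{\got{i}^c}([x,u])=0$. This is the only spot in the entire verification where the assumption that $\got{i}$ is an \emph{ideal} (and not merely a Lie subalgebra) enters; I expect this to be the main conceptual step rather than a computational obstacle, since every other condition would hold verbatim for any Lie subalgebra with a chosen complement.
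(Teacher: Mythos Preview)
Your proposal is correct and follows essentially the same approach as the paper's proof. The only cosmetic difference is that where the paper decomposes $\Ker(\mbf{P})$ explicitly as $A\oplus B\oplus C$ with $B= C^{\bullet}(\got{g};(\got i^c)^\circ\otimes\got{i})$ and $C= C^{\bullet}(\got{g};\got{i}^\circ\otimes\got{g})$ and then checks closure term by term, you recognize $B\oplus C$ at once as $C^{\bullet}(\got{g};\got{gl}(\got{g},\got{i}))$ and use that $\got{gl}(\got{g},\got{i})$ is closed under composition; this is the same verification packaged more conceptually, and your observation that the ideal hypothesis enters only in checking $\Theta\in\ker\mbf{P}$ is exactly what the paper's proof shows as well.
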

\begin{proof}
By Section \ref{graded_LA_LA+rep}, $(\mbf{g},\llbra\cdot,\cdot\rrbra)$ is a graded Lie algebra and $\mu_{\got{g}}+\ad^{\got{g}}$ is a Maurer-Cartan element in it. 

First check that $\got{a}\subset\mbf{g}$ is an abelian subalgebra via the inclusion $\mbf I$. If $\psi_1\in C^{k_1}(\got{g};\got{i}^*\otimes \got i^c)$ and $\psi_2\in C^{k_2}(\got{g};\got{i}^*\otimes \got i^c)$, then for $x_1,\ldots, x_{k_1+k_2}, y\in \got g$
	\begin{align*}
		&\llbra\mbf{I}(\psi_1),\mbf{I}(\psi_2)\rrbra(x_1,\dots,x_{k_1+k_2},y)=\bigleftpar(-1)^{k_1k_2}\hspace{+0.1cm}\mbf{I}(\psi_1)\circ\mbf{I}(\psi_2)-\mbf{I}(\psi_2)\circ\mbf{I}(\psi_1)\bigrightpar(x_1,\dots,x_{k_1+k_2},y).
		\end{align*}
		It is enough to check that $(\mbf{I}(\psi_1)\circ\mbf{I}(\psi_2))(x_1,\dots,x_{k_1+k_2},y)$ vanishes.
		By definition, this is 
		\begin{equation}\begin{split}
		&\sum_{\tau\in S_{(k_2,k_1)}}\hspace{-0.2cm}(-1)^{\tau}\hspace{+0.1cm}\mbf{I}(\psi_{1})\bigleftpar x_{\tau(k_2+1)},\dots,x_{\tau(k_2+k_1)},\mbf{I}(\psi_2)(x_{\tau(1)},\dots,x_{\tau(k_2)},y)\bigrightpar\\
		&=\sum_{\tau\in S_{(k_2,k_1)}}\hspace{-0.2cm}(-1)^{\tau}\hspace{+0.1cm}\iota_{\got i^c}\left(\psi_1\biggleftpar x_{\tau(k_2+1)},\dots,x_{\tau(k_1+k_2)},(\pr_{\got i}\circ \iota_{\got i^c})\bigleftpar\psi_2\bigleftpar x_{\tau(1)},\dots,x_{\tau(k_2)},\pr_{\got{i}}(y)\bigrightpar\bigrightpar\biggrightpar\right),
		\end{split}
		\end{equation}
		which vanishes because $\pr_{\got i}\circ \iota_{\got i^c}=0$.
	
\medskip

Next check that $\Ker(\mbf{P})$ is a graded Lie subalgebra of $(\mbf{g},\llbra\cdot,\cdot\rrbra)$. Observe that the kernel of the projection $\mbf{P}:\mbf{g}\twoheadrightarrow\got{a}$ consists in the following graded vector spaces:
\begin{equation*}
	\Ker(\mbf{P})=\underbrace{ C^{\bullet}(\got{g};\got{g})[1]}_{=:A}\oplus\left(\underbrace{ C^{\bullet}(\got{g};(\got i^c)^\circ\otimes\got{i})}_{=:B}\oplus\underbrace{ C^{\bullet}(\got{g};\got{i}^\circ\otimes\got{g})}_{=:C}\right).
\end{equation*}
By \eqref{eq1_LALArep}, the first component $A$ is a graded Lie (sub)algebra of $\mbf g$, while \eqref{eq2_LALArep} implies immediately that $\llbra A,B\rrbra\subset B$ and $\llbra A,C\rrbra\subset C$. 
Last, \eqref{eq3_LALArep} implies that $\llbra B,B\rrbra\subset B$, $\llbra C,C\rrbra\subset C$ and $\llbra B,C\rrbra\subset B$. The element $\mu_{\got{g}}$ is clearly in $A$ 
and $\ad^{\got{g}}\in\Ker(\mbf{P})$ since $\got{i}\subset\got{g}$ is an ideal. 
Hence $\Theta=\mu_{\got{g}}+\ad^{\got{g}}\in\Ker(\mbf{P})\cap\MC(\mbf{g})$.
\end{proof}

The following shows that the dgL$[1]$a structure on $ C^{\bullet}(\got{g};\got{i}^*\otimes\ \got i^c)$ induced by the above Voronov dataset is the one controlling the deformation problem of the ideal $\got{i}\lhd\got{g}$. That is, the unary bracket 
\[ m_1\colon C^{\bullet}(\got{g};\got{i}^*\otimes\ \got i^c)\to C^{\bullet+1}(\got{g};\got{i}^*\otimes\ \got i^c)
\]
defined by 
$m_1(\psi)=\mbf{P}\llbra\mu_{\got{g}}+\ad^{\got{g}},\mbf{I}(\psi)\rrbra$ equals the differential $\delta_{\got{g}\rhd\got{i}}^{\Hom}$, up to the identification $\got i^c\simeq\got{g}/\got{i}$. The following computation confirms that this is exactly the case. For $\psi\in \wedge^k\got g^*\otimes(\got i^*\otimes {\got i^c})$, $x_1, \ldots, x_{k+1}\in\got g$ and $u\in \got i$ 
\begin{align*}
	&m_1(\psi)(x_1,\dots,x_{k+1},u)=\mbf{P}\llbra\mu_{\got{g}}+\ad^{\got{g}},\mbf{I}(\psi)\rrbra(x_1,\dots,x_{k+1},u)\\
	&=\mbf{P}\llbra\mu_{\got{g}},\mbf{I}(\psi)\rrbra(x_1,\dots,x_{k+1},u)+\mbf{P}\llbra\ad^{\got{g}},\mbf{I}(\psi)\rrbra(x_1,\dots,x_{k+1},u).
	\end{align*}
The first term is 
	\begin{align*}
	&\pr_{\got i^c}\bigleftpar\llbra\mu_{\got{g}},\mbf{I}(\psi)\rrbra(x_1,\dots,x_{k+1},\iota_{\got{i}}(u))\bigrightpar\\
		&=-\pr_{\got i^c}\left(\sum_{\tau\in S_{(2,k-1)}}\hspace{-0.2cm}(-1)^{\tau}\hspace{+0.1cm}\mbf{I}(\psi)\bigleftpar\mu_{\got{g}}(x_{\tau(1)},x_{\tau(2)}),x_{\tau(3)},\dots,x_{\tau(k+1)},\iota_{\got{i}}(u)\bigrightpar\right)\\
	&=-\hspace{-0.3cm}\sum_{\tau\in S_{(2,k-1)}}\hspace{-0.2cm}(-1)^{\tau}\psi([x_{\tau(1)},x_{\tau(2)}],x_{\tau(3)},\dots,x_{\tau(k+1)},u)\\
	&=\sum_{i<j}^{k+1}(-1)^{i+j}\hspace{+0.1cm}\psi([x_{i},x_{j}],x_1,\dots, \widehat{x_i} ,\dots, \widehat{x_j} ,\dots,x_{k+1},u)
		\end{align*}
and the second term is 
\begin{align*}
	&\pr_{\got i^c}\bigleftpar\llbra\ad^{\got{g}},\mbf{I}(\psi)\rrbra(x_1,\dots,x_{k+1},\iota_{\got{i}}(u))\bigrightpar\\
	=\,&\pr_{\got i^c}\left((-1)^{k}\hspace{-0.2cm}\sum_{\tau\in S_{(k,1)}}\hspace{-0.2cm}(-1)^{\tau}\ad^{\got{g}}_{x_{\tau(k+1)}}\left(\mbf{I}(\psi)(x_{\tau(1)},\dots,x_{\tau(k)},\iota_{\got{i}}(u))\right)\right)\\
	&-\pr_{\got i^c}\left(\sum_{\tau\in S_{(1,k)}}(-1)^{\tau}\hspace{+0.1cm}\mbf{I}(\psi)\left(x_{\tau(2)},\dots,x_{\tau(k+1)},\ad^{\got{g}}_{x_{\tau(1)}}(\iota_{\got{i}}(u))\right)\right)\\
	=\,&\sum_{\tau\in S_{(1,k)}}(-1)^{\tau}\pr_{\got i^c}\biggleftpar\ad^{\got{g}}_{x_{\tau(1)}}\bigleftpar \iota_{\got i^c}(\psi(x_{\tau(2)},\dots,x_{\tau(k+1)},u))\bigrightpar\biggrightpar\\
	&-\sum_{\tau\in S_{(1,k)}}(-1)^{\tau}\hspace{+0.1cm}\psi\bigleftpar x_{\tau(2)},\dots,x_{\tau(k+1)},[x_{\tau(1)},u]\bigrightpar\\
	=\,&\sum_{i=1}^{k+1}(-1)^{i+1}\ad^{\got{g}/\got i}_{x_{i}}(\psi(x_{1},\dots, \widehat{x_i} ,\dots,x_{k+1},u))+\sum_{i=1}^{k+1}(-1)^i\hspace{+0.1cm}\psi(x_{1},\dots, \widehat{x_i} , \dots,x_{k+1},[x_i, u]).
\end{align*}
Together, the two terms add therefore up to $\delta_{\got{g}\rhd\got{i}}^{\Hom}(\psi)(x_1,\ldots, x_{k+1}, u)$.

\bigskip

Next consider the binary bracket. 
Choose $\psi$ and $\phi\in 
 C^0(\got{g};\got{i}^*\otimes \got i^c)=\got{i}^*\otimes \got i^c$. 
 Compute 
  \begin{equation*}\begin{split}
 \llbra\llbra\mu_{\got{g}}+\ad^{\got{g}},\mbf{I}(\psi)\rrbra,\mbf{I}(\phi)\rrbra
 &=\llbra\llbra\mu_{\got{g}},\mbf{I}(\psi)\rrbra,\mbf{I}(\phi)\rrbra+\llbra\llbra\ad^{\got{g}},\mbf{I}(\psi)\rrbra,\mbf{I}(\phi)\rrbra\\
&=\llbra 0, \mbf{I}(\phi)\rrbra+\llbra\ad^{\got{g}}\circ \mbf{I}(\psi)-\mbf{I}(\psi)\circ \ad^{\got{g}},\mbf{I}(\phi)\rrbra\\
&=\ad^{\got{g}}\circ \cancel{\mbf{I}(\psi)\circ \mbf{I}(\phi)}
-\mbf{I}(\psi)\circ \ad^{\got{g}}\circ \mbf{I}(\phi)-\mbf{I}(\phi)\circ \ad^{\got{g}}\circ \mbf{I}(\psi)
+\cancel{ \mbf{I}(\phi)\circ \mbf{I}(\psi)}\circ \ad^{\got{g}}.
 \end{split}\end{equation*}
Hence  for all $x\in \got g$ and $u\in \got i$
 \begin{equation*}\begin{split}
 m_2(\psi,\phi)(x,u)&=-\psi\left(\pr_{\got i}\left( \ad^{\got{g}}_x(\phi(u))\right)\right)
 -\phi\left(\pr_{\got i}\left( \ad^{\got{g}}_x(\psi(u))\right)\right)\\
 &=-\psi\left(\pr_{\got i}[x,\phi(u)]\right)-\phi\left(\pr_{\got i}[x,\psi(u)]\right).
 \end{split}\end{equation*}
 In particular,
 $m_2(\phi,\phi)$ is defined by 
 \begin{equation}\label{m_2_on_same_deg0}
 m_2(\phi,\phi)(x,u)=-2\phi(\pr_{\got i}[x,\phi(u)])
 \end{equation}
 for all $x\in \got g$ and all $u\in \got i$.

An easy computation using $I(\psi)\circ I(\phi)=0$ for all $\psi,\phi\in C^\bullet(\got g; \got i^*\otimes {\got i^c})$ shows that $m_k= 0$ for all $k\geq 3$.

\begin{thm}\label{dgl1a_ideal}
	Given a $k$-dimensional Lie ideal $\got{i}\lhd(\got{g},[\cdot,\cdot])$ together with a complement $\got i^c\subset\got{g}$, there is a bijection between
	\begin{enumerate}
		\item MC-elements of the $L_\infty[1]$-algebra $( C^{\bullet}(\got{g};\got{i}^*\otimes \got i^c),\delta_{\got{g}\rhd\got{i}}^{\Hom}=m_1,m_2)$, and
		\item small deformations of the ideal $\got{i}\lhd\got{g}$,
	\end{enumerate}
given by the correspondence: \[C^0(\got g; \got i^*\otimes {\got i^c})\ni \phi\mto\Graph(\phi)\subseteq \got g.\]
\end{thm}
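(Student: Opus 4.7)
The plan is to translate the condition ``$\grap(\phi)\lhd\got{g}$'' into a pointwise equation in $\got{i}^*\otimes\got{i}^c$ and then verify that this is precisely the Maurer--Cartan equation $m_1(\phi)+\tfrac{1}{2}m_2(\phi,\phi)=0$ (the higher brackets vanish, as noted before the statement).

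First I would set up the underlying bijection between $\Hom(\got{i},\got{i}^c)$ and the set of $k$-dimensional subspaces of $\got{g}$ that are complementary to $\got{i}^c$, i.e.~those in the chart $U_{\got{i},\got{i}^c}$ centered at $\got{i}$. For any $\phi\in C^0(\got{g};\got{i}^*\otimes\got{i}^c)=\Hom(\got{i},\got{i}^c)$, the subspace $\grap(\phi)=\{u+\phi(u)\mid u\in\got{i}\}$ is $k$-dimensional and satisfies $\grap(\phi)\oplus\got{i}^c=\got{g}$ (using the decomposition $x=(\pr_{\got{i}}(x)+\phi(\pr_{\got{i}}(x)))+(\pr_{\got{i}^c}(x)-\phi(\pr_{\got{i}}(x)))$). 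Conversely, any $k$-dimensional subspace complementary to $\got{i}^c$ arises uniquely as the graph of such a $\phi$. So the map $\phi\mapsto\grap(\phi)$ is already a bijection between $\Hom(\got{i},\got{i}^c)$ and the set of small deformations in the Grassmannian sense; what remains is to identify, inside this chart, those $\grap(\phi)$ that are ideals with the Maurer--Cartan locus.

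Next I would unpack the ideal condition. For $x\in\got{g}$ and $u\in\got{i}$,
\[
[x,u+\phi(u)]=\underbrace{[x,u]}_{\in\got{i}}+\pr_{\got{i}}[x,\phi(u)]+\pr_{\got{i}^c}[x,\phi(u)],
\]
and this lies in $\grap(\phi)$ iff its $\got{i}^c$-component equals $\phi$ of its $\got{i}$-component, i.e.
\begin{equation}\label{eq:idealcond}
\pr_{\got{i}^c}[x,\phi(u)]=\phi\bigl([x,u]+\pr_{\got{i}}[x,\phi(u)]\bigr).
\end{equation}
Since $\grap(\phi)$ is already $\ad_x$-invariant at $u=0$ and the above condition is linear in $u\in\got{i}$, the ideal condition for $\grap(\phi)$ is equivalent to \eqref{eq:idealcond} for all $x\in\got{g}$ and $u\in\got{i}$.

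Finally I would compare \eqref{eq:idealcond} with the Maurer--Cartan equation of the dgL$[1]$a $(C^\bullet(\got{g};\got{i}^*\otimes\got{i}^c),m_1,m_2)$ at a degree-$0$ element $\phi$. From the computation of $m_1$ carried out just before the statement, together with the identification $\got{i}^c\simeq\got{g}/\got{i}$, one has
\[
m_1(\phi)(x,u)=\pr_{\got{i}^c}[x,\phi(u)]-\phi([x,u]),
\]
and from \eqref{m_2_on_same_deg0},
\[
\tfrac{1}{2}m_2(\phi,\phi)(x,u)=-\phi\bigl(\pr_{\got{i}}[x,\phi(u)]\bigr).
\]
Adding these, the Maurer--Cartan equation $m_1(\phi)+\tfrac{1}{2}m_2(\phi,\phi)=0$ reads
\[
\pr_{\got{i}^c}[x,\phi(u)]-\phi([x,u])-\phi\bigl(\pr_{\got{i}}[x,\phi(u)]\bigr)=0,
\]
which is exactly \eqref{eq:idealcond}. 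Since $m_k=0$ for $k\geq 3$ (because $I(\psi)\circ I(\phi)=0$ for all such $\psi,\phi$, as observed above), this is the full Maurer--Cartan equation, and the bijection $\phi\leftrightarrow\grap(\phi)$ restricts to the desired bijection between Maurer--Cartan elements and small deformations of $\got{i}\lhd\got{g}$. The only mildly delicate point is being consistent throughout with the identification $\got{g}/\got{i}\simeq\got{i}^c$ induced by the chosen complement, but this is exactly what the definitions of $\mbf I$ and $\mbf P$ encode, so the matching is automatic once one writes things out.
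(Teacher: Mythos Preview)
Your proposal is correct and follows essentially the same approach as the paper: both unpack the ideal condition $[x,u+\phi(u)]\in\grap(\phi)$ into the equation $\pr_{\got{i}^c}[x,\phi(u)]=\phi([x,u])+\phi(\pr_{\got{i}}[x,\phi(u)])$ and then match it termwise against $m_1(\phi)+\tfrac12 m_2(\phi,\phi)=0$ using the explicit formulas computed just before the statement. You spell out the Grassmannian-chart bijection $\phi\leftrightarrow\grap(\phi)$ a little more explicitly than the paper does, but otherwise the arguments coincide.
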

\begin{proof}
	The subspace $\Graph(\phi)=\{u+\phi(u) | u\in\got{i}\}$ is an ideal in $\got g$ if and only if $[\got{g},\Graph(\phi)]\subset\Graph(\phi)$. Equivalently, this means that for every $x\in\got{g}$ and $u\in\got{i}$:
	\begin{align*}
		[x,u]+[x,\phi(u)]=[x,u+\phi(u)]\in\Graph(\phi).	\end{align*}
This can be rephrased as
\begin{equation*}
	\pr_{\got i^c}([x,\phi(u)])=\phi([x,u])+\phi(\pr_{\got{i}}[x,\phi(u)])
\end{equation*}
for all $x\in \got g$ and all $u\in \got i$.
On the other hand, compute the Maurer-Cartan equation of $(C^\bullet(\got{g};\got{i}^*\otimes V),\delta_{\got{g}\rhd\got{i}}^{\Hom}=m_1,m_2)$ for $\phi\in C^0(\got g; \got i^*\otimes {\got i^c})$:
\begin{equation*}
	\delta_{\got{g}\rhd\got{i}}^{\Hom}(\phi)+\frac{1}{2}m_2(\phi,\phi)=0.
\end{equation*}
On $x\in\got{g}$ and $u\in\got{i}$, this reads \begin{equation*}
	\pr_{\got i^c}([x,\phi(u)])-\phi([x,u])-\phi(\pr_{\got{i}}[x,\phi(u)])=0
\end{equation*}
by the considerations above and using the identification $\got g/\got i\simeq {\got i^c}$.
This concludes the proof.
\end{proof}

\begin{rem}
	Observe that in the case of a Lie subalgebra $\got{h}\subset\got{g}$, the controlling $L_{\infty}[1]$-algebra becomes a dgL$[1]$a 
	if the chosen complement $V_{\got{h}}$ is a Lie subalgebra itself (such a complement does not always exist), since then a straightforward computation shows that the trinary bracket $m_3$ defined in \eqref{m_3_subalgebra} vanishes. 
	
	However, in the case of a Lie ideal $\got{i}\lhd\got{g}$, the choice of a complement $\got{i}^c$ which is closed under the Lie bracket does not simplify the problem. However, picking a complement which is itself a Lie ideal turns the deformation problem into a linear one since then the binary bracket $m_2$ vanishes. Once again, a choice like this,\ is not always possible -- except for example when $\got{g}$ is semisimple.
\end{rem}

\subsection{The controlling $L_{\infty}[1]$-algebra of simultaneous small deformations}
In the situation of the previous section, using Theorem \ref{theorem Voronov for L-infty algebra on the mapping cone}, 
there is a cubic $L_{\infty}[1]$-algebra structure $\{\widetilde{m_i}\}_{i=1}^3$ on the direct sum
\begin{equation}\label{L_infty_simul1}
	\mbf{g}[1]\oplus\got{a}\eqdef C^{\bullet}(\got{g};\got{g})[2]\oplus C^{\bullet}(\got{g};\got{gl}(\got{g}))[1]\oplus C^{\bullet}(\got{g};\got{i}^*\otimes \got i^c),
\end{equation}
where, for any\footnote{Here $|x|$ stands for the degree of $x$ as an element of $\mbf g$.} $x,y\in\mbf{g}$ and $a,a_1,a_2\in\got{a}$, the brackets are defined as follows:
\begin{equation}\label{L_infty_simul2}
\begin{split}
	\widetilde{m_1}(x+a)&\eqdef-\left\llbra \mu_{\got{g}}+\ad^{\got{g}},x\right\rrbra+ \mbf{P}(x)+ m_1(a)\\
	\widetilde{m_2}(x,y)&\eqdef(-1)^{|x|}\llbra x,y\rrbra\\
	\widetilde{m_2}(x,a)&\eqdef\mbf{P}\llbra x,\mbf{I}(a)\rrbra\\
	\widetilde{m_2}(a_1,a_2)&\eqdef m_2(a_1,a_2)\\
	\widetilde{m_3}(x,a_1,a_2)&\eqdef\mbf{P}\llbra\llbra x,\mbf{I}(a_1)\rrbra,\mbf{I}(a_2)\rrbra.
	\end{split}
\end{equation}

Let $(\got g, \mu_{\got g})$ be a Lie algebra and $\got i$ an ideal in this Lie algebra. Choose a vector space $\got i^c\subset\got{g}$ complementing $\got i$ in $\got g$. A \textbf{simultaneous small deformation} of $(\mu_{\got g}, \got i)$ is a pair $(\mu'\in\wedge^2\got g^*\otimes \got g, \got i'\subseteq \got g)$ such that $\mu_{\got g}+\mu'$ is a Lie bracket on $\got g$ and  $\got i'$ a vector subspace of $\got g$ such that $\got g=\got i'\oplus {\got i^c}$ and $\got i'$ is an ideal in  $(\got g, \mu_{\got g}+\mu')$.

\begin{thm}
	Let $\got{i}$ be an ideal in a Lie algebra $(\got{g},\mu_{\got{g}})$ and choose a vector subspace $\got i^c\subset\got{g}$ complementing $\got i$ in $\got g$. Consider $\phi\in \got i^*\otimes {\got i^c}$ and $\mu'\in \wedge^2\got g^*\otimes \got g$. Define $\ad'\in\wedge^1\got{g}^*\otimes\got{gl}(\got{g})$ by $\ad'_x(y)\eqdef\mu'(x,y)$ for all $x,y\in \got g$, and set $\Graph(\phi)=:\got{i}'$.
	
	Then the pair $(\mu',\got{i}')$ is a simultaneous small deformation of $(\mu_{\got{g}},\got{i})$ if and only if $(\mu',\ad',\phi)$  is a Maurer-Cartan element of the $L_\infty[1]$-algebra $\bigleftpar\mbf{g}[1]\oplus\got{a},\widetilde{m_1},\widetilde{m_2},\widetilde{m_3}\bigrightpar$ defined in \eqref{L_infty_simul1} and \eqref{L_infty_simul2}.
	\end{thm}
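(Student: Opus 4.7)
The plan is to apply Theorem \ref{theorem Voronov for L-infty algebra on the mapping cone} to the Voronov-dataset from Proposition \ref{lemma for V-data of an ideal} and expand the Maurer-Cartan equation for the degree-$0$ element $X + a := (\mu' + \ad') + \phi \in \mbf{g}[1] \oplus \got{a}$ in the resulting $L_\infty[1]$-algebra. Only $\widetilde{m_1}$, $\widetilde{m_2}$ and $\widetilde{m_3}$ are potentially non-zero, and several combinations of $\mbf{g}[1]$- and $\got{a}$-arguments vanish by construction of the $\widetilde{m_k}$, so the MC-equation splits into a component in $\mbf{g}[1]$ and a component in $\got{a}$, which I would treat separately.

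For the $\mbf{g}[1]$-component, the terms $\widetilde{m_2}(X,a)$, $\widetilde{m_3}(X,a,a)$, $m_2(a,a)$ and $m_1(a)$ all land in $\got{a}$, so only $-\llbra \Theta, X\rrbra -\tfrac{1}{2}\llbra X, X\rrbra = 0$ survives. Using $\llbra \Theta, \Theta\rrbra = 0$ this is equivalent to $\llbra \Theta + X, \Theta + X\rrbra = 0$; by Section \ref{graded_LA_LA+rep} this simultaneously says that $\mu_{\got{g}} + \mu'$ is a Lie bracket on $\got{g}$ and that $\ad^{\got{g}} + \ad'$ is a representation of it on $\got{g}$. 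Since $\ad'_x y = \mu'(x,y)$ by hypothesis, this representation is automatically the adjoint of $\mu_{\got g}+\mu'$, so the $\mbf{g}[1]$-component of the MC-equation is equivalent to the sole condition that $\mu_{\got{g}} + \mu'$ is a Lie bracket.

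For the $\got{a}$-component, I would expand
\[
\mbf{P}(X) + m_1(\phi) + \mbf{P}\llbra X, \mbf{I}(\phi)\rrbra + \tfrac{1}{2}m_2(\phi,\phi) + \tfrac{1}{2}\mbf{P}\llbra\llbra X, \mbf{I}(\phi)\rrbra, \mbf{I}(\phi)\rrbra = 0,
\]
noting that all further iterated brackets vanish because $\mbf{I}(\phi)\circ\mbf{I}(\phi) = 0$ (since $\pr_{\got{i}}\circ \iota_{\got i^c} = 0$), so the $L_\infty[1]$-sum is automatically finite. Using formulas \eqref{eq1_LALArep}--\eqref{eq3_LALArep} together with the explicit forms of $\mbf{I}$ and $\mbf{P}$, each term is evaluated on $(x,u)\in \got{g}\times \got{i}$: the terms $m_1(\phi)$ and $\tfrac{1}{2}m_2(\phi,\phi)$ are already computed in the proof of Theorem \ref{dgl1a_ideal}, and I expect $\mbf{P}(X)(x,u) = \pr_{\got i^c}\mu'(x,u)$, $\mbf{P}\llbra X, \mbf{I}(\phi)\rrbra(x,u) = \pr_{\got i^c}\mu'(x,\phi(u)) - \phi(\pr_{\got{i}}\mu'(x,u))$, and $\tfrac{1}{2}\mbf{P}\llbra\llbra X, \mbf{I}(\phi)\rrbra, \mbf{I}(\phi)\rrbra(x,u) = -\phi(\pr_{\got{i}}\mu'(x,\phi(u)))$. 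Summing these gives
\[
\pr_{\got i^c}(\mu_{\got g}+\mu')(x, u+\phi(u)) = \phi\!\left(\pr_{\got i}(\mu_{\got g}+\mu')(x, u+\phi(u))\right),
\]
which is exactly the condition that $(\mu_{\got g}+\mu')(x, u+\phi(u)) \in \Graph(\phi) = \got i'$ for all $x\in \got g$, $u\in \got i$, i.e.~that $\got i'$ is an ideal of $(\got g, \mu_{\got g}+\mu')$. Combined with the tautology that $\Graph(\phi)$ and $\got i^c$ are complementary in $\got g$, this yields the desired equivalence.

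The main obstacle is the careful evaluation, with correct signs and unshuffle-sums, of the mixed brackets $\mbf{P}\llbra \ad', \mbf{I}(\phi)\rrbra$ and $\mbf{P}\llbra\llbra \ad', \mbf{I}(\phi)\rrbra, \mbf{I}(\phi)\rrbra$; here only the $\ad'$-part of $X$ contributes, since $\llbra \mu', \mbf{I}(\phi)\rrbra = 0$ by \eqref{eq2_LALArep} because $\mbf{I}(\phi)$ has polynomial degree $0$ in $\wedge^\bullet\got{g}^*\otimes\End(\got{g})$. Once the bookkeeping is done, the matching of the two descriptions of the deformation condition is essentially by inspection.
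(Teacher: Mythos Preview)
Your proposal is correct and follows essentially the same approach as the paper's proof: expand the Maurer-Cartan equation for $(\mu'+\ad')+\phi$, split it into the $\mbf{g}[1]$-part (yielding $\llbra \Theta+X,\Theta+X\rrbra=0$, i.e.\ the Lie bracket condition) and the $\got{a}$-part (yielding the ideal condition for $\Graph(\phi)$). The only cosmetic difference is that the paper first rewrites the three summands $\widetilde{m_1},\widetilde{m_2},\widetilde{m_3}$ symbolically in terms of $\ad^{\got g}+\ad'$ before evaluating on $(x,u)$, whereas you compute each bracket contribution directly; the bookkeeping and the conclusion are the same.
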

\begin{proof}
	The sum $\mu_{\got{g}}+\mu'$ is a Lie bracket on $\got g$ and $\got{i}'\lhd(\got{g}, \mu_{\got{g}}+\mu')$ is an ideal in the obtained Lie algebra if and only if 
	\begin{align*}
		\llbra\mu_{\got{g}}+\mu',\mu_{\got{g}}+\mu'\rrbra&=0,  \quad \text{ see \eqref{Lie_Brackets_mu}},\\
		(\mu_{\got{g}}+\mu')(x,u+\phi(u))&\subset \Graph(\phi) \quad \text{ for all } x\in \got g, u\in \got i.
	\end{align*}
	The first condition is equivalent to 
	\begin{equation}\label{eq1111}
	\llbra\mu_{\got{g}}+\ad^{\got g}+\mu'+\ad',\mu_{\got{g}}+\ad^{\got g}+\mu'+\ad'\rrbra=0
	\end{equation}
	by Section \ref{graded_LA_LA+rep}, since $\ad^{\got g}+\ad'$ is then the adjoint representation of $\got g$ associated to the Lie bracket $\mu_{\got g}+\mu'$.
The second condition is equivalent to the following equality for all $x\in\got{g}$ and $u\in\got{i}$:
\begin{equation}\label{eq1112}
\begin{split}
		&\pr_{\got i^c}(\mu_{\got{g}}(x,\phi(u)))+\pr_{\got i^c}(\mu'(x,u))+\pr_{\got i^c}(\mu'(x,\phi(u)))\\
		&=\phi(\mu_{\got{g}}(x,u))+\phi(\pr_{\got{i}}(\mu'(x,u)))+\phi(\pr_{\got{i}}(\mu_{\got{g}}(x,\phi(u))))+\phi(\pr_{\got{i}}(\mu'(x,\phi(u)))).
\end{split}
\end{equation}

\bigskip

The Maurer-Cartan equation for \[\mu'+\ad'+\phi\in C^2(\got{g};\got{g})[2]\oplus C^1(\got{g};\got{gl}(\got{g}))[1]\oplus C^0(\got{g};\got{i}^*\otimes \got i^c)\]
is 
\begin{equation}\label{MC_sim}
	\widetilde{m_1}\left(\mu'+\ad'+\phi\right)+\frac{1}{2}\widetilde{m_2}\left(\mu'+\ad'+\phi,\mu'+\ad'+\phi\right)+\frac{1}{6}\widetilde{m_3}\left(\mu'+\ad'+\phi,\mu'+\ad'+\phi,\mu'+\ad'+\phi\right)=0.
\end{equation}
Compute 
\begin{equation*}
\begin{split}
\widetilde{m_1}\left(\mu'+\ad'+\phi\right)&\,=-\left\llbra \mu_{\got g}+\ad^{\got g}, \mu'+\ad'\right\rrbra+\mbf P(\mu'+\ad')+m_1(\phi)\\
&\,=-\left\llbra \mu_{\got g}+\ad^{\got g}, \mu'+\ad'\right\rrbra+\mbf P(\ad')+\mbf P\left\llbra\mu_{\got g}+\ad^{\got g}, I(\phi)\right\rrbra\\
&\overset{\eqref{eq2_LALArep}}{=}-\left\llbra \mu_{\got g}+\ad^{\got g}, \mu'+\ad'\right\rrbra+\mbf P(\ad')+\mbf P\left\llbra\ad^{\got g}, I(\phi)\right\rrbra,
\end{split}
\end{equation*}
\begin{equation*}
\begin{split}
&\frac{1}{2}\widetilde{m_2}\left(\mu'+\ad'+\phi,\mu'+\ad'+\phi\right)=-\frac{1}{2}\left\llbra \mu'+\ad', \mu'+\ad'\right\rrbra+\tilde m_2\left(\mu'+\ad', I(\phi)\right)+\frac{1}{2}m_2(\phi,\phi)\\
&\,=-\frac{1}{2}\left\llbra \mu'+\ad', \mu'+\ad'\right\rrbra+\mbf P\left\llbra \mu'+\ad', I(\phi)\right\rrbra+\frac{1}{2}\mbf P\left\llbra \left\llbra \mu_{\got g}+\ad^{\got g}, I(\phi)\right\rrbra, I(\phi)\right\rrbra\\
&\overset{\eqref{eq2_LALArep}}{=}-\frac{1}{2}\left\llbra \mu'+\ad', \mu'+\ad'\right\rrbra+\mbf P\left\llbra \ad', I(\phi)\right\rrbra+\frac{1}{2}\mbf P\left\llbra \left\llbra \ad^{\got g}, I(\phi)\right\rrbra, I(\phi)\right\rrbra
\end{split}
\end{equation*}
and 
\begin{equation*}
\begin{split}
\frac{1}{6}\widetilde{m_3}\left(\mu'+\ad'+\phi,\mu'+\ad'+\phi,\mu'+\ad'+\phi\right)&\,=\frac{1}{2}\tilde m_3(\mu'+\ad', \phi, \phi)=\frac{1}{2}\mbf P\left\llbra \left\llbra \mu'+\ad', I(\phi)\right\rrbra, I(\phi)\right\rrbra\\
&\overset{\eqref{eq2_LALArep}}{=}\frac{1}{2}\mbf P\left\llbra \left\llbra \ad', I(\phi)\right\rrbra, I(\phi)\right\rrbra.
\end{split}
\end{equation*}
Using $\left\llbra \mu_{\got g}+\ad_{\got g}, \mu_{\got g}+\ad_{\got g}\right\rrbra=0$ (see Section \ref{graded_LA_LA+rep}) and $\left\llbra \mu_{\got g}+\ad^{\got g}, \mu'+\ad'\right\rrbra=\left\llbra \mu'+\ad',  \mu_{\got g}+\ad^{\got g}\right\rrbra$, the left-hand side of \eqref{MC_sim} now reads
\begin{equation*}
\begin{split}
&-\frac{1}{2}\left\llbra \mu_{\got g}+\ad^{\got g}+ \mu'+\ad', \mu_{\got g}+\ad^{\got g}+ \mu'+\ad'\right\rrbra\\
&+ \mbf P(\ad')+\mbf P\left\llbra \ad^{\got g}+\ad', I(\phi)\right\rrbra+\frac{1}{2}\mbf P\left\llbra \left\llbra \ad^{\got g}+ \ad', I(\phi)\right\rrbra, I(\phi)\right\rrbra.
\end{split}
\end{equation*}
The first term has degree $2$ in $\mbf{g}\eqdef C^{\bullet}(\got{g};\got{g})[1]\oplus C^{\bullet}(\got{g};\got{gl}(\got{g}))$, and the remaining terms have degree $1$.
Hence \eqref{MC_sim} holds if and only if \eqref{eq1111} holds true
and 
\[\mbf P(\ad')+\mbf P\left\llbra\ad^{\got g}+\ad', I(\phi)\right\rrbra+\frac{1}{2}\mbf P\left\llbra \left\llbra \ad^{\got g}+ \ad', I(\phi)\right\rrbra, I(\phi)\right\rrbra=0.
\]
A straightforward computation using \eqref{eq3_LALArep} shows that the latter equation reads
\begin{align*}
&\pr_{\got i^c}\left(\ad'_{x}(u)\right)+\pr_{\got i^c}\left(\ad^{\got g}_x(\phi(u))+\ad'_x(\phi(u))\right)
-(\phi\circ \pr_{\got{i}})\left(\ad^{\got g}_x(u)+\ad'_x(u)\right)\\
&-\phi\left(\pr_{\got{i}}\left(\ad^{\got{g}}_x(\phi(u))\right)\right)-\phi\left(\pr_{\got{i}}\left(\ad'_x(\phi(u))\right)\right)=0
\end{align*}
on $x\in \got g$ and $u\in \got i$. Since $\mu_{\got g}(x,u)$ lies in $\got i$ and by definition of $\ad'$, this is \eqref{eq1112}.
\end{proof}

\section{Geometric applications: obstructions, rigidity and stability}\label{geometric_results_ideals}

This section focusses on applying the machinery built above to a study of the local geometry of the (moduli) space of Lie ideals in a given Lie algebra.

Let $\got g$ be a Lie algebra and choose $k\in\{0,\ldots, \dim\got g\}$.
Denote by $\bm{I}_k(\got{g})$ the subset of the Grassmannian $\Gr_k(\got{g})$ consisting of the space of $k$-dimensional Lie ideals inside $(\got{g},\mu_{\got{g}})$.

\subsection{The Kuranishi map and obstructions to deformations of ideals}
Recall that the deformation cohomology \[H^\bullet_{\delta_{\got{g}\rhd\got{i}}^{\Hom}}(\got{g};\got{i}^*\otimes\got{g}/\got{i})=:H^\bullet(\got i\lhd \got g)\] of an ideal $\got i$ in a Lie algebra $\got g$ was defined in Section \ref{def_coh_ideal}, and that the underlying complex $(C^\bullet(\got g; \got i^*\otimes \got g/\got i, \delta_{\got{g}\rhd\got{i}}^{\Hom})$ fits in the dgL$[1]$a $(C^\bullet(\got g; \got i^*\otimes \got g/\got i), \delta_{\got{g}\rhd\got{i}}^{\Hom}=m_1, m_2)$ of Theorem \ref{dgl1a_ideal} -- identifying now the chosen complement ${\got i^c}$ of $\got i$ in $\got g$ with the vector space $\got g/\got i$.
\begin{defin}
 Let $\got g$ be a Lie algebra and let $\got i\lhd{\got{g}}$ be an ideal in $\got g$.
The \textbf{Kuranishi map 
\[\Kur_{\got{i}\lhd\got{g}}\colon H^0(\got{i}\lhd\got{g})\to H^1(\got{i}\lhd\got{g})
\]
 associated to the Lie ideal $\got{i}\lhd{\got{g}}$} is defined as follows:
\begin{align*}
	[\eta]&\mto\frac{1}{2}[m_2(\eta,\eta)].
\end{align*}
\end{defin}
This is well-defined since $m_1=\delta^{\rm Hom}_{\got i\lhd \got g}\colon C^\bullet(\got g; \got i^*\otimes \got g/\got i)\to C^{\bullet+1}(\got g; \got i^*\otimes \got g/\got i)$ and 
\[ m_1(m_2(\eta,\eta))=2m_2(m_1(\eta), \eta)
\]
for $\eta\in C^0(\got g; \got i^*\otimes \got g/\got i)$.

The following proposition is standard in deformation theory, but its proof is repeated here for the convenience of the reader.
\begin{prop}
	If $\Kur_{\got{i}\lhd\got{g}}\neq 0$, then the deformation problem of $\got{i}\lhd\got{g}$ is \emph{obstructed}: there exists a cohomology class $[\eta]\in H^0(\got{i}\lhd\got{g})$ that is not a deformation class -- in other words, there exists no smooth deformation $(\got{i}_t)_{t\in I}$ of the ideal $\got{i}$ inside $\got{g}$ such that $\left.\frac{d}{dt}\right\arrowvert_{t=0}\got i_t=\eta$.
	\end{prop}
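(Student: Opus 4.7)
The plan is to argue by contrapositive: if every cohomology class $[\eta]\in H^0(\got{i}\lhd\got{g})$ were represented by the derivative of a smooth deformation, then $m_2(\eta,\eta)$ would be a coboundary and hence $\Kur_{\got i\lhd \got g}$ would vanish identically. First, observe that $C^{-1}(\got{g};\got{i}^*\otimes\got{g}/\got{i})=0$, so in degree $0$ there are no coboundaries and $[\eta]$ is represented by a unique element $\eta\in\got{i}^*\otimes\got{g}/\got{i}$; moreover $m_1(\eta)=0$ and the graded symmetry of $m_2$ on degree-zero inputs yields $m_1(m_2(\eta,\eta))=2m_2(m_1\eta,\eta)=0$, so $[m_2(\eta,\eta)]\in H^1(\got i\lhd \got g)$ is well defined.

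Now pick $[\eta]\in H^0(\got{i}\lhd\got{g})$ with $\Kur_{\got{i}\lhd\got{g}}([\eta])\neq 0$, and suppose for contradiction that there exists a smooth deformation $(\got{i}_t)_{t\in I}$ of $\got{i}\lhd\got{g}$ with $\left.\tfrac{d}{dt}\right|_{t=0}\got{i}_t=\eta$. Choose a linear complement ${\got i^c}$ of $\got i$ in $\got g$; by the Grassmannian chart recalled in Appendix \ref{tangent_grk} and by transversality at $t=0$, there exists $\varepsilon>0$ and a smooth curve $\phi\colon(-\varepsilon,\varepsilon)\to\got{i}^*\otimes{\got i^c}$ with $\phi(0)=0$, $\got{i}_t=\Graph(\phi(t))$, and $\dot\phi(0)=\eta$ under the canonical identification ${\got i^c}\simeq \got{g}/\got i$.

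By Theorem \ref{dgl1a_ideal}, each $\got{i}_t$ being an ideal is equivalent to $\phi(t)$ satisfying the Maurer-Cartan equation of the dgL$[1]$a $(C^\bullet(\got g;\got i^*\otimes {\got i^c}),m_1,m_2)$,
\begin{equation*}
m_1(\phi(t))+\tfrac{1}{2}m_2(\phi(t),\phi(t))=0\quad\text{for all }t\in(-\varepsilon,\varepsilon).
\end{equation*}
Differentiating this identity twice at $t=0$, using the graded symmetry of $m_2$ on degree $0$ elements and $\phi(0)=0$, yields
\begin{equation*}
m_1(\ddot\phi(0))+m_2(\dot\phi(0),\dot\phi(0))=0,
\end{equation*}
i.e.\ $m_2(\eta,\eta)=-m_1(\ddot\phi(0))$ is a coboundary, so $\Kur_{\got{i}\lhd\got{g}}([\eta])=\tfrac{1}{2}[m_2(\eta,\eta)]=0$, contradicting the choice of $[\eta]$.

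The main (minor) obstacle is bookkeeping: one must check that the tangent vector $\left.\tfrac{d}{dt}\right|_{t=0}\got i_t\in T_{\got i}\Gr_k(\got g)\simeq \got i^*\otimes\got g/\got i$ really coincides with $\dot\phi(0)$ under the identification ${\got i^c}\simeq \got g/\got i$ (this is exactly what Appendix \ref{tangent_grk} supplies), and that the graded symmetry $m_2(a,b)=m_2(b,a)$ on degree-zero elements justifies the differentiation step. Everything else is a direct consequence of the Maurer-Cartan characterization of ideals proved in Theorem \ref{dgl1a_ideal}.
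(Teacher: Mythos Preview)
Your proof is correct and takes essentially the same approach as the paper: both extract the second-order information from the condition that $\got i_t$ is an ideal for all $t$, showing that $m_2(\eta,\eta)$ is a coboundary. The only cosmetic difference is that the paper writes out the Taylor expansion $\phi_t\simeq t\eta+t^2\omega+\mbf{O}(t^3)$ and expands the ideal condition $[x,u+\phi_t(u)]\in\Graph(\phi_t)$ directly to isolate the $t^2$-coefficient, whereas you invoke the Maurer--Cartan characterization of Theorem \ref{dgl1a_ideal} and differentiate it twice; these are the same computation in different packaging.
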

\begin{proof}
	Let $\got{i}^c$ a complement of $\got{i}\lhd\got{g}$ and let $(\phi_t)_{t\in I}\in\got{i}^*\otimes \got{i}^c$ be a smooth family of linear maps starting at the zero map. Then $\got{i}_t\eqdef\Graph(\phi_t)$ is a smooth deformation of the vector space $\got{i}\eqdef\got{i}_0$.	Consider the Taylor expansion of $\phi_t$ around $t=0$:
	\begin{equation*}
		\phi_t\simeq t\eta+t^2\omega+\mbf{O}(t^3)
	\end{equation*}
	with $\eta, \omega\in \got i^*\otimes {\got i^c}$.
	The following shows that the condition of $\Graph(\phi_t)$ being an ideal in $\got{g}$ for all $t$ forces the Kuranishi map to vanish on $\eta=\left.\frac{d}{dt}\right\arrowvert_{t=0}\phi_t$. Let $x\in\got{g}$ and $u\in\got{i}$, then 
	\begin{align*}
		[x,u+\phi_t(u)]&=[x,u]+t[x,\eta(u)]+t^2[x,\omega(u)]+\mbf{O}(t^3)\in \Graph(\phi_t)
	\end{align*}
	for all $t$
	induces
	\begin{equation*}	t\pr_{\got i^c}([x,\eta(u)])+t^2(\pr_{\got i^c}[x,\omega(u)])=t\eta([x,u])+t^2\omega([x,u])+t^2\eta(\pr_{\got{i}}[x,\eta(u)])+\mbf{O}(t^3),
	\end{equation*}
	for all $t$,
	which is equivalent to the following two equations:
	\begin{align*}
		\pr_{\got i^c}([x,\eta(u)])-\eta([x,u])&=0\\
		\pr_{\got i^c}([x,\omega(u)])-\eta(\pr_{\got{i}}[x,\eta(u)])-\omega([x,u])&=0.
	\end{align*}
	Since $x\in \got g$ and $u\in \got i$ where arbitrary, the first equation
	recovers the fact that $\delta_{\got i\lhd \got g}^{\rm Hom}(\eta)=0$, while the second equation and \eqref{m_2_on_same_deg0} yield $m_2(\eta,\eta)=-2\cdot \delta_{\got i\lhd \got g}^{\rm Hom}(\omega)$.
Hence \[\Kur_{\got{i}\lhd{\got{g}}}([\eta])=\frac{1}{2}[m_2(\eta,\eta)]=\left[\delta_{\got{g}\rhd{\got{i}}}^{\Hom}(-\omega)\right]=0.\] 
This shows that the Kuranishi map vanishes on deformation classes.
\end{proof}

\begin{ex}[The case $H^2(\got{g}/\got{i};\got{g}/\got{i})=0$]
Recall that Proposition \ref{the lemma where we defined the cochain map from def. complex of can projection to the def. complex of ideals} shows that  the cochain map $$\Pi_{\wedge}\colon C^{\bullet+1}(\got{g};\got{g}/\got{i})\twoheadrightarrow  C_{\wedge}^{\bullet}(\got{g};\got{i}^*\otimes\got{g}/\got{i}), \quad \phi\mto\phi|_{\wedge^k\got{g}\wedge\got{i}}$$ sends the deformation cocycle associated to a deformation $(\pi^t_{\got{g}/\got{i}})_{t\in I}$ of the projection $\pi_{\got g/\got i}\colon \got g\to \got g/\got i$ to the deformation cocycle defined by the deformation of the ideal $\got i$ defined (locally) by the kernels $\left(\Ker(\pi^t_{\got{g}/\got{i}})\right)_{t\in I}$.

 Combining the assumption $H^2(\got{g}/\got{i};\got{g}/\got{i})=0$ (e.g.~when $\got{i}=\rad(\got{g})$ or $\got{g}/\got{i}$ is semi-simple) and \eqref{commutative diagram where the def. cohomology of an ideal lives}
shows that \[H^0(\Pi_{\wedge})\colon H^1(\got{g};\got{g}/\got{i})\to H^0_\wedge(\got g; \got i^*\otimes \got g/\got i)=H^0(\got{i}\lhd\got{g})\] is surjective. This implies that  $\got{i}\lhd\got{g}$ is (topologically) rigid, see Definition \ref{def_top_rig} and Theorem \ref{theorem for Aut(g)-rigidity of ideals}
 below. 

In particular every deformation class of the ideal $\got{i}\lhd\got{g}$ is the image under $H^0(\Pi_{\wedge})$ of a cohomology class in the deformation cohomology of the canonical projection $\pi_{\got{g}/\got{i}}\colon \got{g}\to\got{g}/\got{i}$.

When $H^2(\got{g}/\got{i};\got{g}/\got{i})=0$ in fact every \emph{smooth} deformation of the ideal $\got{i}\lhd{\got{g}}$ arises as the kernel of a smooth deformation of the canonical projection. To see this, use the fact that $(\got{g}/\got{i},\mu_{\got{g}/\got{i}})$ is (geometrically) rigid and so every smooth deformation of the Lie bracket on $\got{g}/\got{i}$ is (geometrically) equivalent to the trivial one (see \cite{Nijenhuis-Richardson-Deformations-of-Lie-algebra-structures-67'}, or \cite[Section 4.1]{Crainic-preprint-on-the-perturbation-lemma-and-deformations-2004}). Let $(\got{i}_t)_{t\in I}$ be a smooth deformation of the ideal $\got{i}\lhd\got{g}$. At each time $t\in I$, the ideal $\got i_t$ defines the canonical projections $\pi_{\got{g}/\got{i}_t}\colon \got{g}\to\got{g}/{\got{i}_t}$. The Lie algebra structures on $\got{g}/{\got{i}_t}$, $t\in I$, are then transferred to Lie algebras $(\got{g}/\got{i},\mu_{\got{g}/\got{i}}^t)$ via a smooth family of linear isomorphisms denoted by $\alpha_t\colon \got g/\got i_t \to \got g/\got i$ and satisfying $\alpha_0=\id_{\got g/\got i}$. Then  $(\mu_{\got{g}/\got{i}}^t)_{t\in I}$ is a smooth family of deformations of the Lie bracket $\mu_{\got{g}/\got{i}}$ induced by the transfer. Using the fact that $\got{g}/\got{i}$ is rigid, there exists a smooth family of Lie algebra isomorphisms $\phi_t\colon (\got{g}/{\got{i}},\mu_{\got{g}/\got{i}}^t)\to(\got{g}/\got{i},\mu_{\got{g}/\got{i}})$, with $\phi_0=\id_{\got g/\got i}$. Then the maps \[\pi^t_{\got{g}/\got{i}}\eqdef\phi_t\circ\alpha_t\circ\pi_{\got{g}/\got{i}_t}\colon \got{g}\to\got{g}/\got{i},\] for all $t\in I$, define a smooth deformation of the canonical projection $\pi_{\got{g}/\got{i}}$, such that $\Ker(\pi^t_{\got{g}/\got{i}})=\got{i}_t$ for all $t\in I$.
\end{ex}

The following example illustrates once more how the deformation theory of an ideal is \emph{not} its deformation theory as a Lie subalgebra.
\begin{ex}[Obstructed as an ideal $\centernot{\Longrightarrow}$ obstructed as Lie subalgebra]
	Let $\got{h}_3(\mbb{R})\eqqcolon\got{g}$ be the 3-dimensional Heisenberg algebra, in other words the Lie algebra of $(3\times 3)$-strictly upper triangular matrices. The center $\Cent(\got{h}_3(\mbb{R}))\eqqcolon\got{i}$ of $\got{h}_3(\mbb{R})$ is the $1$-dimensional ideal generated by the basis vector:
	\begin{equation*}
		\begin{pmatrix}
			0 & 0 & 1\\
			0 & 0 & 0\\
			0 & 0 & 0
		\end{pmatrix}
	\end{equation*}
	Furthermore, consider the complement $\got i^c\subset\got{h}_3(\mbb{R})$, which is generated by the other two  canonical basis vectors:
	\begin{equation*}
		\begin{pmatrix}
			0 & 1 & 0\\
			0 & 0 & 0\\
			0 & 0 & 0
		\end{pmatrix} \qquad \text{and}  \qquad 	\begin{pmatrix}
			0 & 0 & 0\\
			0 & 0 & 1\\
			0 & 0 & 0
		\end{pmatrix}.
	\end{equation*}
	The inclusions $[\got{h}_3(\mbb{R}),\Cent(\got{h}_3(\mbb{R}))]=0$ and $[\got{h}_3(\mbb{R}), \got i^c]\subset\Cent(\got{h}_3(\mbb{R}))$ imply that any $0$-cochain is a $0$-cocycle: for any $\eta\in \got i^*\otimes\got g/\got i$ and any $x\in\got{g}=\got{h}_3(\mbb{R})$ and $u\in\got{i}=\Cent(\got{h}_3(\mbb{R}))$: $$\delta_{\got{i}\rhd\got{g}}^{\Hom}(\eta)(x,u)=\pr_{\got i^c}(\underbrace{[x,\eta(u)]}_{\in\got{i}})-\eta(\underbrace{[x,u]}_{=0})=0.$$	
	The Kuranishi map  is hence given here by
	\begin{equation*}
		\Kur_{\got{i}\lhd{\got{g}}}\colon C^0(\got{i}\lhd\got{g})\to Z^1(\got{i}\lhd\got{g}), \quad \Kur_{\got{i}\lhd\got{g}}(\eta)=\frac{1}{2}m_2(\eta,\eta)\end{equation*}
		with 
		\[ m_2(\eta, \eta)(x,u)=-2\eta\left(\pr_{\got i}[x, \eta(u)]\right)=-2\eta\left([x, \eta(u)]\right)
		\]
		for all $x\in\got{h}_3(\mbb{R})$ and all $u\in \got i$.
		
		 A linear map $\phi\colon \got{i}\to \got i^c$ is uniquely defined by
	\begin{equation*}
		\begin{pmatrix}
			0 & 0 &1\\
			0 & 0 & 0\\
			0 & 0 & 0
		\end{pmatrix} \stackrel{\phi}{\mapsto}	\begin{pmatrix}
			0 & \alpha & 0\\
			0 & 0 & \beta\\
			0 & 0 & 0
		\end{pmatrix}
	\end{equation*}
	with $\alpha,\beta\in\mathbb R$.
	It is a $0$-cocycle, but the $1$-cocycle $\Kur_{\got{i}\lhd{\got{g}}}(\phi)=\frac{1}{2}m_2(\phi,\phi)$ does not vanish (unless $\phi=0$) since it does 
	\[
	\left(\begin{pmatrix} 0&x& 0\\ 0& 0& y\\ 0&0&0\end{pmatrix}, \begin{pmatrix} 0&0& 1\\ 0& 0& 0\\ 0&0&0\end{pmatrix}\right)\mapsto -\begin{pmatrix} 0&\alpha^2y-\alpha\beta x& 0\\ 0& 0& \alpha\beta y-\beta^2x\\ 0&0&0\end{pmatrix}
	\]
	for all $x,y\in\mathbb R$.
 Therefore, the deformation problem of the ideal $\Cent(\got{h}_3(\mbb{R}))\lhd\got{h}_3(\mbb{R})$ is obstructed: the ideal $\got i=\Cent(\got{h}_3(\mbb{R}))$
 of $\got h_3(\mathbb R)$ admits no smooth deformation. On the other hand, since $\got i=\Cent(\got{h}_3(\mbb{R}))$ has dimension $1$, any deformation of the vector subspace $\got i$ is a deformation of $\got i$ as a Lie subalgebra of $\got{h}_3(\mbb{R})$. 
	Hence, the deformation problem of the Lie subalgebra $\Cent(\got{h}_3(\mbb{R}))\subseteq \got{h}_3(\mbb{R})$ is unobstructed.
\end{ex}

Denote by $\mbf{S}_k(\got{g})$ the subset of $\Gr_k(\got{g})$ consisting of the space of $k$-dimensional Lie subalgebras of $\got{g}$. A $k$-dimensional Lie ideal $\got{i}\lhd\got{g}$ is a point in $\mbf{I}_k(\got{g})\subset\mbf{S}_k(\got{g})$. The local geometry of $\got{i}\lhd\got{g}$ as a point in the bigger space $\mbf{S}_k(\got{g})$ and as a point in the smaller $\mbf{I}_k(\got{g})$ is hence different in general.

\subsection{Rigidity of ideals in Lie algebras}
This section is inspired from the study in \cite{Crainic-Ivan-Schaetz} of the rigidity of Lie algebras, Lie algebra morphisms and Lie subalgebras. Similar techniques are developed to obtain a rigidity result for Lie ideals.

Let $(E\to M, G)$ be a \textbf{$G$-vector bundle}, i.e.~a vector bundle $E\to M$, together with a Lie group $G$ acting on it by vector bundle automorphisms. That is, the smooth Lie group action $$\alpha\colon G\times E\to E, \quad \alpha(g,e)\eqqcolon ge\eqqcolon \alpha_g(e)$$ restricts to a Lie group action on the zero section $M$, denoted by $\alpha^{\res}\colon G\times M\to M$ such that, for all $g\in G$, the map $\alpha_g\colon E\to E$ is a vector bundle automorphism over $\alpha^{\res}_g\colon M\to M$. 
\begin{rem}
	The restriction $T_{0^E}E$ of the tangent space of $E\to M$ along its zero section is canonically isomorphic as a vector bundle over $M$ to $TM\oplus E$, via the isomorphism
	\[TM\oplus E\to T_{0^E}E, \qquad (v_m,e_m)\mapsto T_m0^E(v_m)+\left.\frac{d}{dt}\right\arrowvert_{t=0}te_m\] In other words, 
	 the short exact sequence 
\begin{equation*}
\begin{tikzcd}
	E\ar[r, hookrightarrow] & T_{0^E}E \ar[r, twoheadrightarrow, "Tq"] & TM,
\end{tikzcd}
\end{equation*}
with the inclusion $E\hookrightarrow T_{0^E}E$, $e\to \left.\frac{d}{dt}\right\arrowvert_{t=0}te$, 
is canonically split by the tangent of the zero section $T0^E\colon TM\to T_{0^E}E$.
\end{rem}
\begin{defin}
	Let $(E\to M, G)$ be a $G$-vector bundle. A section $\sigma\colon M\to E$ is called \textbf{G-equivariant} if $\sigma(gx)=g\sigma(x)$ for any $g\in G$ and $x\in M$.
	The \textbf{vertical tangent map} \[T_x^{\ver}\sigma\colon T_xM\to E_x\] 
	\textbf{of $\sigma\colon M\to E$ at a zero $x\in\sigma^{-1}(0)\eqdef\{x\in M\ | \ \sigma(x)=0^E_x\}$} is the map $T^{\ver}\sigma\colon TM\to E$ defined as the composition of $T_x\sigma\colon T_xM\to T_{0^E_x}E\simeq T_xM\oplus E_x$, followed by the projection onto $E_x$.
	A zero $x\in M$ of $\sigma$ is called \textbf{non-degenerate} if the sequence 
	\begin{equation}\label{the sequence used for defining non-degenerace of zeros of a section}
		\begin{tikzcd}
			\got{g} \ar[r, "T\alpha^x"] & T_xM \ar[r, "T_x^{\ver}\sigma"] & E_x
		\end{tikzcd}
	\end{equation}
	is exact, 
	where $\alpha^x \colon G\to M$ is the orbit map of $\alpha^{\res}$ at $x\in M$.
\end{defin}
The following proposition on the openness of orbits is proved in \cite{Crainic-Ivan-Schaetz}.
\begin{prop}\label{proposition of openess of orbits}
	Let $(E\to M, G)$ be a $G$-vector bundle, let $\sigma\colon M\to E$ be a $G$-equivariant section and assume that $x\in\sigma^{-1}(0)$ is nondegenerate. Then there is an open neighborhood $U$ of $x$ and a smooth map $h\colon U\to G$ such that for all $y\in U$ with $\sigma(y)=0$ the equality $h(y)x=y$ holds. In particular, the orbit of $x$ under the $G$-action and the zero set of $\sigma$ coincide in an open neighborhood of $x$.
\end{prop}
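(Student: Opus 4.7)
The plan is to use the non-degeneracy hypothesis and the implicit function theorem to construct a local product decomposition of $M$ near $x$ into an ``orbit direction'' (parametrised by a complement of $\ker(T\alpha^x)$ in $\mathfrak{g}$) and a ``transversal direction'' $V\subseteq T_xM$ complementing $\operatorname{im}(T\alpha^x)$, and then to show that $\sigma$ forces the transversal coordinate to vanish.

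First I would fix a linear complement $W\subseteq \mathfrak{g}$ of $\ker(T\alpha^x)$, so that $T\alpha^x|_{W}\colon W\longrightarrow \operatorname{im}(T\alpha^x)$ is a linear isomorphism, and a linear complement $V\subseteq T_xM$ of $\operatorname{im}(T\alpha^x)$. Choose a local smooth embedding $\psi\colon V\to M$ with $\psi(0)=x$ and $T_0\psi$ equal to the inclusion $V\hookrightarrow T_xM$ (e.g.\ via a chart around $x$). Define
\[
\Phi\colon W\times V\longrightarrow M,\qquad \Phi(\xi,v)=\exp(\xi)\cdot\psi(v).
\]
A direct computation shows that $T_{(0,0)}\Phi(\xi,v)=T\alpha^x(\xi)+v$, which by the choice of $W$ and $V$ is a linear isomorphism $W\oplus V\xrightarrow{\sim} T_xM$. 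The inverse function theorem therefore produces an open neighborhood $U\subseteq M$ of $x$ on which $\Phi^{-1}$ is a well-defined smooth map, yielding smooth functions $\xi\colon U\to W$ and $v\colon U\to V$ with $y=\exp(\xi(y))\cdot\psi(v(y))$. Set $h(y):=\exp(\xi(y))$, which is a smooth map $U\to G$.

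It remains to show that $\sigma(y)=0$ forces $v(y)=0$ (hence $y=h(y)\cdot x$), possibly after shrinking $U$. Fix a local vector bundle trivialisation of $E$ near $x$, so that $\sigma$ is represented locally by a smooth map $\sigma'$ with values in the fixed fibre $E_x$, whose derivative at $x$ coincides with the vertical tangent map $T_x^{\ver}\sigma$. The composition
\[
\sigma'\circ\psi\colon V\longrightarrow E_x
\]
satisfies $(\sigma'\circ\psi)(0)=0$ and has differential at $0$ equal to $T_x^{\ver}\sigma\big|_{V}$. By non-degeneracy, $\ker(T_x^{\ver}\sigma)=\operatorname{im}(T\alpha^x)$, which intersects $V$ trivially; hence this differential is \emph{injective}. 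The inverse function theorem (or constant-rank theorem) implies that $\sigma'\circ\psi$ is a local embedding at $0$, so after shrinking $V$ (and correspondingly $U$) the map $\sigma'\circ\psi$ is injective, and in particular $(\sigma'\circ\psi)(v)=0$ if and only if $v=0$.

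Finally I would combine these ingredients via equivariance. For $y\in U$ with $\sigma(y)=0$, the $G$-equivariance of $\sigma$ and the fact that the $G$-action sends zero vectors to zero vectors give
\[
0=h(y)^{-1}\cdot \sigma(y)=\sigma\bigl(h(y)^{-1}\cdot y\bigr)=\sigma(\psi(v(y))),
\]
which in the local trivialisation reads $(\sigma'\circ\psi)(v(y))=0$. By the previous paragraph this forces $v(y)=0$, whence $y=h(y)\cdot x$, proving the first claim. The last assertion is immediate: the inclusion ``orbit $\cap\, U\subseteq \sigma^{-1}(0)\cap U$'' uses only $G$-equivariance of $\sigma$ together with $\sigma(x)=0$, while the reverse inclusion is precisely what has just been established. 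The main technical obstacle is the careful combination of the two applications of the inverse function theorem together with a local trivialisation of $E$; once those are in place, equivariance supplies the rest.
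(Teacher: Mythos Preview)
The paper does not give a proof of this proposition; it merely cites \cite{Crainic-Ivan-Schaetz}, so there is no in-paper argument to compare against. Your proposal is correct and is essentially the standard proof of such an ``openness of orbits'' result: build a local slice chart $\Phi(\xi,v)=\exp(\xi)\cdot\psi(v)$ via the inverse function theorem, observe that non-degeneracy makes $T_x^{\ver}\sigma\arrowvert_{V}$ injective so that $\sigma'\circ\psi$ is a local embedding, and then use equivariance to transport $\sigma(y)=0$ to the slice and conclude $v(y)=0$. One small point worth making explicit for the reader is that a vector bundle trivialisation of $E$ around $x$ identifies the zero section with the constant zero map, so $\sigma(p)=0_p$ is equivalent to $\sigma'(p)=0$; with that in hand the final equivariance step goes through as you wrote it.
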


\bigskip

Let $\got g$ be a Lie algebra and let $\got i$ be an ideal of $\got g$ of dimension $k$.
Consider the smooth vector bundle $E\to M$, with base manifold $M\eqdef\Gr_k(\got{g})$ and fiber over $W\in\Gr_{k}(\got{g})$ given by $E_W\eqdef \{W\}\times\Hom(\got{g}, W^*\otimes\got{g}/W)$.  Let $\operatorname{Taut}_ k(\got g)\to \Gr_k(\got g)$ be the tautological vector bundle, i.e.~$\operatorname{Taut}_ k$ is the rank $k$ vector subbundle of $\Gr_k(\got g)\times \got g$ given by $\operatorname{Taut}_k(\got g)\arrowvert_{\{W\}}=\{W\}\times W$. Then $E\to \Gr_k(\got g)$ is the smooth vector bundle
\begin{equation}\label{def_E}
E=(\Gr_k(\got g)\times\got g^*)\otimes (\operatorname{Taut}_k(\got g))^*\otimes \frac{\Gr_k(\got g)\times\got g}{\operatorname{Taut}_k(\got g)} \longrightarrow \Gr_k(\got g).
\end{equation}
The dual vector bundle $\Taut_k^*(\got g)$ is the quotient of $\Gr_k(\got g)\times \got g^*$ by its smooth vector subbundle 
\[(\Taut_k(\got g))^\circ=\cup_{W\in\Gr_k(\got g)}\{W\}\times \{l\in \got g^*\mid l(w)=0 \text{ for all } w\in W\}.
\]
The vector bundle $E\to \Gr_k(\got g)$ comes as a consequence with the smooth vector bundle projection 
\[\begin{tikzcd}
	{\Gr_k(\got g)\times (\got g^*\otimes\got g^*\otimes \got g)} &&& E \\
	&& {\Gr_k(\got g)}
	\arrow["P", from=1-1, to=1-4]
	\arrow["{\pr_1}"', from=1-1, to=2-3]
	\arrow["\pi_E", from=1-4, to=2-3]
\end{tikzcd}\]
sending $(W,\phi)\in \Gr_k(\got g)\times (\got g^*\otimes\got g^*\otimes \got g)$ to 
\[ (W, \pi_{\got g/W}\circ \phi\arrowvert_{\got g\otimes W})\in E\arrowvert_W,
\]
see Appendix \ref{triv_E}, where local trivialisations of $E$ are given.

The section
\begin{equation}\label{def_sigma_section}
	\sigma\colon M\to E, \quad W\mto\left(W, \pi_{\got g/W}\circ \mu_{\got g}\arrowvert_{\got g\otimes W}\right)
\end{equation}
is smooth since it can be written as $P\circ \tilde \sigma\colon \Gr_k(\got g)\to E$, with the constant section 
\[ \tilde \sigma\colon \Gr_k(\got g)\to \Gr_k(\got g)\times (\got g^*\otimes \got g^*\otimes \got g), \qquad W\mapsto (W, \mu_{\got g}).
\]
The zero set $\sigma^{-1}(0)$ of $\sigma$ is exactly the space $\bm{I}_k(\got{g})\subset\Gr_k(\got{g})$.

\begin{rem}\label{sigma_to_sec_general}
Note that this construction is independent of $\mu_{\got g}$ satisfying the Jacobi identity (or being skew-symmetric). In fact, for each $\nu\in \wedge^2\got g^*\otimes \got g\simeq \Hom(\wedge^2\got g, \got g)$ the constant section 
\[ \tilde \nu\colon  \Gr_k(\got g)\to \Gr_k(\got g)\times (\got g^*\otimes \got g^*\otimes \got g), \qquad W\mapsto (W, \nu)
\]
projects as above to a smooth section \[\Sigma(\nu)\colon \Gr_k(\got g) \to E, \qquad  W\mto\left(W, \pi_{\got g/W}\circ \nu\arrowvert_{\got g\otimes W}\right)\] such that the diagram 
\[\begin{tikzcd}
	{\Gr_k(\got g)\times (\got g^*\otimes \got g^*\otimes \got g)} && E \\
	\\
	& {\Gr_k(\got g)}
	\arrow["P", from=1-1, to=1-3]
	\arrow["{\tilde \nu}"{description}, from=3-2, to=1-1]
	\arrow["{\Sigma(\nu)}"{description}, from=3-2, to=1-3]
\end{tikzcd}\]
commutes.
The map 
\[ \Sigma\colon \wedge^2\got g^*\otimes \got g \to \Gamma(E)
\]
is then an $\mathbb R$-linear map.
\end{rem}

\bigskip

The following computes explicitly the vertical differential $T_{\got{i}}^{\ver}\sigma\colon T_{\got{i}}\Gr_k(\got{g})\to E_{\got{i}}$. 
The following lemma is useful for this.
\begin{lem}
Let $\pi\colon E\to M$ be a smooth vector bundle with a vector bundle isomorphism $\phi\colon E\to M\times E\arrowvert_{p}$
for some $p\in M$, such that $\phi\arrowvert_{p}\colon E\arrowvert_{p}\to \{p\}\times E\arrowvert_{p}$ is the identity. If $p$ is a zero of a smooth section $\sigma\colon M\to E$, then 
\[ T_p^{\ver}\sigma=T_p^{\ver}(\phi\circ \sigma)\colon T_pM\to E\arrowvert_{p}, 
\]
and so 
\[ T_p(\phi\circ\sigma)(v_p)=\left(v_p, 0^E_p, T_p^{\ver}\sigma(v_p)\right)\in T_pU\times \left\{0^E_p\right\}\times E\arrowvert_{p}=T_pU\times T_{0^E_p}(E\arrowvert_{p})
\]
for all $v_p\in T_pM$.
\end{lem}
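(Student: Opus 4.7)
The plan is to reduce the identity to the statement that the tangent map $T_{0^E_p}\phi$ acts as the identity on the canonical splitting $T_{0^E_p}E\simeq T_pM\oplus E\arrowvert_p$. First, since $\phi$ is a vector bundle morphism over $\id_M$, composition with $\pi_1\colon M\times E\arrowvert_p\to M$ gives $\pi_1\circ\phi=\pi$, so $\phi\circ\sigma$ is a section of the \emph{trivial} bundle $M\times E\arrowvert_p\to M$; and because $\phi\arrowvert_p=\id_{E\arrowvert_p}$, this section vanishes at $p$. Writing $\phi\circ\sigma(x)=(x,s(x))$ for the induced smooth map $s\colon M\to E\arrowvert_p$ with $s(p)=0^E_p$, the product structure immediately yields
\[T_p(\phi\circ\sigma)(v_p)=(v_p,T_ps(v_p)),\]
and by definition of the vertical differential for the trivial bundle, $T_p^{\ver}(\phi\circ\sigma)(v_p)=T_ps(v_p)$.

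The main step is to prove $T_p^{\ver}(\phi\circ\sigma)=T_p^{\ver}\sigma$. For this I analyze the map $T_{0^E_p}\phi\colon T_{0^E_p}E\to T_{(p,0^E_p)}(M\times E\arrowvert_p)$ under the canonical decompositions provided by the zero-section splittings of the short exact sequences $E\arrowvert_p\hookrightarrow T_{0^E_p}E\twoheadrightarrow T_pM$ (and analogously on the target). On the horizontal summand, i.e.~the image of $T_p0^E$, the fact that $\phi$ sends the zero section of $E$ to the zero section of $M\times E\arrowvert_p$ (as a vector bundle morphism) gives $T\phi\circ T0^E=T0^{M\times E\arrowvert_p}$, so this summand is preserved. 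On the vertical summand $V_{0^E_p}E\simeq E\arrowvert_p$, the restriction of $T\phi$ equals the differential at $0$ of the linear fiber map $\phi\arrowvert_p\colon E\arrowvert_p\to E\arrowvert_p$, which is $\phi\arrowvert_p=\id_{E\arrowvert_p}$ itself. Thus $T_{0^E_p}\phi$ corresponds to $\id_{T_pM\oplus E\arrowvert_p}$ under the two canonical decompositions.

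By the chain rule, $T_p(\phi\circ\sigma)=T_{0^E_p}\phi\circ T_p\sigma$, and since $T_{0^E_p}\phi$ is the identity on the canonical decomposition, the projections onto the vertical summand coincide, giving the first displayed equality. Combined with the formula $T_p(\phi\circ\sigma)(v_p)=(v_p,T_p^{\ver}(\phi\circ\sigma)(v_p))$ from the first paragraph, the second displayed equality follows immediately. The only step that requires some care is the identification of $T\phi$ on the vertical subspace with $\phi\arrowvert_p$ itself; this is a routine computation using the curves $t\mapsto te$ for $e\in E\arrowvert_p$ together with the linearity of $\phi\arrowvert_p$, and I do not expect any essential obstruction beyond this bookkeeping.
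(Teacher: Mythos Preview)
Your proof is correct and follows essentially the same route as the paper: both arguments use the chain rule $T_p(\phi\circ\sigma)=T_{0^E_p}\phi\circ T_p\sigma$, the fact that $\phi$ intertwines zero sections, and the computation of $T_{0^E_p}\phi$ on vertical vectors via the curves $t\mapsto te_p$ together with linearity of $\phi\arrowvert_p$. The paper phrases the vertical differential as the difference $T_p\sigma v_p-T_p0^E v_p$ and applies linearity of $T_{0^E_p}\phi$ directly, whereas you organize the computation through the full splitting $T_{0^E_p}E\simeq T_pM\oplus E\arrowvert_p$; this is a cosmetic difference only.
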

\begin{proof}
By definition, $T_p^{\ver}\sigma\colon T_pU\to E\arrowvert_{p}$ sends $v_p\in T_p M$ to $T_p\sigma v_p-T_p0^Ev_p\in E\arrowvert_{p}=T^\pi_{0^E_p}E$. As a consequence, 
\begin{equation*}
\begin{split}
T^{\ver}_p(\phi\circ\sigma)(v_p)&=T_p(\phi\circ\sigma)(v_p)-T_p0^{M\times E\arrowvert_{p}}(v_p)
=T_{0^E_p}\phi(T_p\sigma v_p)-T_{0^E_p}\phi(T_p0^E v_p)\\
&=T_{0^E_p}\phi(T_p\sigma v_p -T_p0^Ev_p)=T_{0^E_p}\phi\left(T^{\ver}_p\sigma v_p
\right)
\end{split}
\end{equation*}
for all $v_p\in T_pM$. Choose $e_p\in E\arrowvert_{p}$ and consider the corresponding 
vertical vector 
\[ \left.\frac{d}{dt}\right\arrowvert_{t=0}te_p\in T_{0^E_p}E.
\]
Then 
\[T_{0^E_p}\phi\left(\left.\frac{d}{dt}\right\arrowvert_{t=0}te_p
\right)=\left.\frac{d}{dt}\right\arrowvert_{t=0}t\phi(e_p)=\left.\frac{d}{dt}\right\arrowvert_{t=0}te_p
\]
since $\phi\arrowvert_{p}\colon E\arrowvert_{p}\to \{p\}\times E\arrowvert_{p}$ is the identity map. This shows that 
\[T^{\ver}_p\sigma v_p=T_{0^E_p}\phi\left(T^{\ver}_p\sigma v_p
\right)=T^{\ver}_p(\phi\circ\sigma)(v_p)
\]
for all $v_p\in T_pM$.

The second statement then follows immediately.
\end{proof}

The last lemma shows that the map $T_{\got{i}}^{\ver}\sigma\colon T_{\got{i}}\Gr_k(\got{g})\to E_{\got{i}}$ can be computed in a chart $U:=U^{\got i, \got i^c}\simeq \got i^*\otimes \got i^c$ given by a choice of complement $\got i^c$ for $\got i$ in $\got g$, since 
it induces a trivialisation 
\[ E\arrowvert_{U^{\got i, \got i^c}}\to U^{\got i, \got i^c}\times(\got g^*\otimes \got i^*\otimes \got g/\got i)
\] 
that is is the identity on $E\arrowvert_{\got i}$. By \eqref{triv_E_2},
the image under the isomorphism $\Phi^{-1}\colon E\arrowvert_{U}\to U\times (\got g^*\otimes \got i^*\otimes \got g/\got i) $
of $\sigma\arrowvert_{U}\colon U\to E\arrowvert_{U}$
is given by 
\[ \tilde\sigma:=\Phi^{-1}\circ \sigma\colon U\to U\times(\got g^*\otimes \got i^*\otimes \got g/\got i), \qquad \tilde\sigma(\grap(\phi))=\left(\phi, \mu_{\phi}\right)\] for all $\phi\in \got i^*\otimes \got i^c$,
 with $\mu_{\phi}\in \got g^*\otimes \got i^*\otimes \got g/\got i$ given by 
\[  \mu_{\phi}(x,u)=\pr_{\got g/\got i}[x, u+\phi(u)]-(\pr_{\got g/\got i}\circ\phi\circ \pr_{\got i})[x, u+\phi(u)]\]
 for all $x\in \got g$ and all $u\in \got i$.
 
Take a smooth curve $\phi\colon I\to U$ with $\phi(0)=0$ i.e.~$\grap(\phi(0))=\got i$. Then as discussed in Appendix \ref{tangent_grk}, \[
\dot\phi(0)\in T_{\got i}\Gr_k(\got g)=\got i^*\otimes \got g/\got i
\] and by the considerations above
\begin{equation*}
\begin{split}
T^{\ver}_{\got i}\sigma\left(\left.\frac{d}{dt}\right\arrowvert_{t=0}\phi(t)\right)=\left.\frac{d}{dt}\right\arrowvert_{t=0}\mu_{\phi(t)}.
\end{split}
\end{equation*}
On $x\in \got g$ and $u\in \got i$, 
\begin{equation*}
\begin{split}
&\left(\left.\frac{d}{dt}\right\arrowvert_{t=0}\mu_{\phi(t)}\right)(x,u)=\left.\frac{d}{dt}\right\arrowvert_{t=0}\left(\pr_{\got g/\got i}[x, u+\phi(t)(u)]-(\pr_{\got g/\got i}\circ\phi(t)\circ \pr_{\got i})[x, u+\phi(t)(u)]\right)\\
&=\pr_{\got g/\got i}\left[x, \dot\phi(0)(u)\right]-\left(\pr_{\got g/\got i}\circ\dot \phi(0)\circ \pr_{\got i}\right)\left[x, u+\phi(0)(u)\right]-(\pr_{\got g/\got i}\circ\phi(0)\circ \pr_{\got i})\left[x, u+\dot\phi(0)(u)\right]\\
&=\pr_{\got g/\got i}\left[x, \dot\phi(0)(u)\right]-\left(\pr_{\got g/\got i}\circ\dot \phi(0)\circ \pr_{\got i}\right)\underset{\in \got i}{\underbrace{\left[x, u\right]}}=\pr_{\got g/\got i}\left[x, \dot\phi(0)(u)\right]-(\pr_{\got g/\got i}\circ\dot \phi(0))\left[x, u\right]
\end{split}
\end{equation*}
since\footnote{In this computation $\dot \phi(0)$ is considered an element of $\got i^*\otimes \got i^c$.} $\phi(0)=0$. This shows that 
\[T^{\ver}_{\got i}\sigma\left(\left.\frac{d}{dt}\right\arrowvert_{t=0}\phi(t)\right)=\delta_{\got{g}\rhd{\got{i}}}^{\Hom}\left(\left.\frac{d}{dt}\right\arrowvert_{t=0}\phi(t)\right).
\]
Since the curve $\phi\colon I \to U$ was arbitrary, this shows that 
\[T^{\ver}_{\got i}\sigma=\delta_{\got{g}\rhd{\got{i}}}^{\Hom}\colon \got i^*\otimes \got g/\got i\to \got g^*\otimes \got i^*\otimes \got g/\got i.
\]

\bigskip

Let $\Aut(\got{g})$ be the Lie group of Lie algebra automorphisms of $(\got{g},\mu_{\got{g}})$ and consider the (left) linear Lie group action 
\begin{equation*}
	\tilde \alpha\colon\Aut(\got{g})\times (\Gr_k(\got g)\times (\got g^*\otimes \got g^*\otimes \got g))\to (\Gr_k(\got g)\times (\got g^*\otimes \got g^*\otimes \got g))
\end{equation*}
defined by
\begin{equation*}
	(\Psi, (W, \omega)\mapsto (\Psi(W), \Psi\circ\omega\circ (\Psi^{-1}, \Psi^{-1})),
	\end{equation*}
	where $\omega\in \got g^*\otimes \got g^*\otimes \got g$ is seen as an element of $\Hom(\got g\otimes \got g, \got g)$. The action $\tilde \alpha$ is linear over the canonical Lie group action $\Aut(\got g)\times \Gr_k(\got g)\to \Gr_k(\got g)$, $(\Psi, W)\mapsto \Psi(W)$.
	By the following proposition, there is a unique Lie group action 
\begin{equation*}
	\alpha\colon\Aut(\got{g})\times E\to E
\end{equation*}
such that 
\[\begin{tikzcd}
	{\Aut(\got g)\times (\Gr_k(\got g)\times (\got g^*\otimes \got g^*\otimes \got g))} && {\Gr_k(\got g)\times (\got g^*\otimes \got g^*\otimes \got g)} \\
	\\
	\\
	{\Aut(\got g)\times E} && E
	\arrow["{\tilde\alpha}", from=1-1, to=1-3]
	\arrow["{\id_{\Aut(\got g)}\times P}"{description}, shift right, from=1-1, to=4-1]
	\arrow["P", from=1-3, to=4-3]
	\arrow["\alpha"', from=4-1, to=4-3]
\end{tikzcd}\]
commutes. The Lie group action $\alpha$ is explicitly
defined by
\begin{equation*}
	 \bigleftpar\Psi, (W,\phi_{W})\bigrightpar\mto\bigleftpar\Psi(W),\Psi\phi_W\bigrightpar\in\{\Psi(W)\}\times\Hom\bigleftpar\got{g},(\Psi(W))^*\otimes\got{g}/{\Psi(W)}\bigrightpar,
\end{equation*}
where the second component is given on $x\in\got{g}$ and $w\in W$ by:
\begin{equation}\label{the action of Aut(g) on ideals}
\Psi\phi_W(x,\Psi(w))\eqdef\pi_{\got{g}/\Psi(W)}\circ\Psi\bigleftpar(\underbrace{\phi_W(\Psi^{-1}(x),w)}_{\in\got{g}/W})^\text{lift}\bigrightpar,
\end{equation}
where for any $y\in \got g/W$, the element $y^\text{lift}$ of $\got g$ is an arbitrary choice of element of $\got g$ such that $\pi_{\got g/W}\left(y^\text{lift}\right)=y$.

\begin{prop}\label{aut_action}
In the situation above, the pair $(E\to M, \Aut(\got{g}))$ is an $\Aut(\got{g})$-vector bundle and its section $\sigma$ defined in \eqref{def_sigma_section} is $\Aut(\got{g})$-equivariant.
\end{prop}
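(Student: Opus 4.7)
The plan is to verify the statement in four steps, exploiting the factorization $\sigma = P\circ \tilde\sigma$ from Remark \ref{sigma_to_sec_general} and the linearity of $P$.

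First, I would show that the diagonal Lie group action $\tilde\alpha$ on $\Gr_k(\got g)\times (\got g^*\otimes \got g^*\otimes \got g)$ descends through the vector bundle projection $P$ to a well-defined Lie group action $\alpha$ on $E$. For this it suffices to verify that $\tilde\alpha_{\Psi}$ sends the fibers of $P$ to fibers of $P$. Concretely, given $(W,\phi),(W,\phi')$ with $P(W,\phi)=P(W,\phi')$, one has $(\phi-\phi')(\got g\otimes W)\subseteq W$; and then $\Psi\circ (\phi-\phi')\circ (\Psi^{-1},\Psi^{-1})$ evaluated on $\got g\otimes \Psi(W)$ equals $\Psi((\phi-\phi')(\got g\otimes W))\subseteq \Psi(W)$, so the two images in $\Hom(\got g,\Psi(W)^*\otimes \got g/\Psi(W))$ coincide. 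Unwinding the quotient map $P$ on a representative then yields exactly the formula \eqref{the action of Aut(g) on ideals}.

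Second, I would check that for each $\Psi\in \Aut(\got g)$ the induced map $\alpha_\Psi\colon E\to E$ is a vector bundle automorphism covering the canonical action $W\mapsto \Psi(W)$ on $\Gr_k(\got g)$. Linearity on fibers is immediate from the formula, since $\phi_W\mapsto \Psi\phi_W$ is built out of $\Psi$, $\Psi^{-1}$ and $\pi_{\got g/\Psi(W)}$, all of which are linear. Smoothness of $\alpha$ reduces to smoothness of $\tilde\alpha$ (which is evident) together with smoothness of $P$ and of the section $\tilde\sigma$, combined with the local trivializations of $E$ recalled in Appendix \ref{triv_E}. The inverse of $\alpha_\Psi$ is clearly $\alpha_{\Psi^{-1}}$, so $\alpha_\Psi$ is a vector bundle isomorphism.

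Finally, equivariance of $\sigma$ follows almost tautologically from $\sigma = P\circ \tilde\sigma$ and the $\Aut(\got g)$-invariance of $\mu_{\got g}$. For any $\Psi\in \Aut(\got g)$ and $W\in \Gr_k(\got g)$,
\begin{equation*}
\tilde\alpha\bigl(\Psi,\tilde\sigma(W)\bigr)=\bigl(\Psi(W),\,\Psi\circ \mu_{\got g}\circ (\Psi^{-1},\Psi^{-1})\bigr)=\bigl(\Psi(W),\mu_{\got g}\bigr)=\tilde\sigma(\Psi(W)),
\end{equation*}
where the middle equality uses that $\Psi$ is a Lie algebra automorphism. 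Applying $P$ and commutativity of the diagram defining $\alpha$ from $\tilde\alpha$ gives $\alpha_\Psi(\sigma(W))=\sigma(\Psi(W))$. The main obstacle is really just the first step: making the descent of $\tilde\alpha$ through $P$ rigorous and extracting from it the explicit formula \eqref{the action of Aut(g) on ideals}; once that is in place the remaining verifications are formal.
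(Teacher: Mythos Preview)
Your proposal is correct and follows essentially the same approach as the paper: the paper also descends $\tilde\alpha$ through $P$ by checking that the kernel subbundle of $P$ is $\tilde\alpha$-invariant (it states this as a small general lemma about quotients of $G$-vector bundles), and then obtains equivariance of $\sigma$ from equivariance of the constant section $\tilde\sigma$, using $\Psi\circ\mu_{\got g}\circ(\Psi^{-1},\Psi^{-1})=\mu_{\got g}$ exactly as you do. Your fiber-to-fiber check is precisely the kernel-invariance verification carried out in the paper, just phrased elementwise rather than via the explicit splitting of $\ker P$ into two summands.
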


This follows from the following lemma, the proof of which is straightforward and left to the reader.
\begin{lem}
Let $F\to M$ be a smooth vector bundle over a smooth manifold $M$, let $F_0\subset F$ be a vector subbundle over $M$, and let $G$ be a Lie group. 
Let $P\colon F\to F/F_0$ be the canonical epimorphism of vector bundles over the identity on $M$.
Assume that 
\[\tilde \alpha\colon G\times F\to F, \qquad \alpha_0\colon G\times M\to M
\]
are smooth (left) Lie group actions, such that 
\[\begin{tikzcd}
	G\times F & F \\
	G\times M & M
	\arrow["{\tilde\alpha}", from=1-1, to=1-2]
	\arrow[from=1-1, to=2-1]
	\arrow[from=1-2, to=2-2]
	\arrow["{\alpha_0}"', from=2-1, to=2-2]
\end{tikzcd}\]
is a vector bundle homomorphism.
\begin{enumerate}
\item If $\tilde\alpha_g(F_0)=F_0$ for all $g\in G$, then $\tilde \alpha$ quotients to a smooth Lie group action 
\[ \alpha\colon G\times F/F_0\to F/F_0, \qquad \alpha(g,f_p+F_0(p))\mapsto \tilde\alpha(g, f_p)+F_0(gp)
\]
for all $g\in G$, $p\in M$ and $f_p\in F(p)$, i.e. such that 
\[\begin{tikzcd}
	&& {G\times F/F_0} && {F/F_0} \\
	{G\times F} && F \\
	\\
	{G\times M} && M
	\arrow["\alpha"{description}, from=1-3, to=1-5]
	\arrow["{\id_G\times P}"{description}, from=2-1, to=1-3]
	\arrow["{\tilde\alpha}"{description, pos=0.4}, from=2-1, to=2-3]
	\arrow["P"{description}, from=2-3, to=1-5]
	\arrow[dashed, from=1-3, to=4-1]
	\arrow[from=2-1, to=4-1]
	\arrow["{\alpha_0}"{description}, from=4-1, to=4-3]
	\arrow[from=1-5, to=4-3]
	\arrow[from=2-3, to=4-3]
\end{tikzcd}\]
commutes.
\item Consider a smooth $G$-equivariant section $\tilde \sigma\colon M\to F$. Then $\tilde \sigma$ quotient to a smooth $G$-equivariant section $\sigma:=P\circ \tilde\sigma\colon M\to F/F_0$, i.e.~such that 
\[\begin{tikzcd}
	&& {G\times F/F_0} && {F/F_0} \\
	{G\times F} && F \\
	\\
	{G\times M} && M
	\arrow["\alpha"{description}, from=1-3, to=1-5]
	\arrow["{\id_G\times P}"{description}, from=2-1, to=1-3]
	\arrow["{\tilde\alpha}"{description, pos=0.4}, from=2-1, to=2-3]
	\arrow["P"{description}, from=2-3, to=1-5]
	\arrow["{\id_G\times \sigma}"{description, pos=0.4}, dashed, from=4-1, to=1-3]
	\arrow["{\id_G\times\tilde\sigma}"{description}, from=4-1, to=2-1]
	\arrow["{\alpha_0}"{description}, from=4-1, to=4-3]
	\arrow["\sigma"{description}, from=4-3, to=1-5]
	\arrow["{\tilde \sigma}"{description}, from=4-3, to=2-3]
\end{tikzcd}\]
commutes.
\end{enumerate}
\end{lem}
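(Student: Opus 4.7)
The lemma consists of two independent claims, and my plan is to treat them in turn, each time reducing the work to well-definedness, functoriality (action axioms, respectively equivariance), and smoothness.

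For part (1), the first step is to verify that the formula $\alpha(g, f_p + F_0(p)) := \tilde\alpha(g, f_p) + F_0(gp)$ gives a well-defined map $G \times F/F_0 \to F/F_0$. If $f_p$ and $f'_p$ in $F_p$ differ by an element of $F_0(p)$, then since $\tilde\alpha_g\colon F_p \to F_{gp}$ is linear (because $\tilde\alpha$ is a vector bundle homomorphism over $\alpha_0$) and $\tilde\alpha_g(F_0(p)) \subseteq F_0(gp)$ by hypothesis, the difference $\tilde\alpha(g, f_p) - \tilde\alpha(g, f'_p)$ lies in $F_0(gp)$. Next I would confirm the two action axioms $\alpha(e, \cdot) = \mathrm{id}_{F/F_0}$ and $\alpha(g_1 g_2, \cdot) = \alpha(g_1, \alpha(g_2, \cdot))$ by lifting the analogous identities for $\tilde\alpha$ through $P$, and similarly check that $\alpha$ is a vector bundle homomorphism over $\alpha_0$.

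The main point is smoothness of $\alpha$. The projection $P\colon F \to F/F_0$ is a surjective submersion, hence so is $\mathrm{id}_G \times P\colon G \times F \to G \times F/F_0$. By the well-definedness just established, one has the equality
\[
\alpha \circ (\mathrm{id}_G \times P) = P \circ \tilde\alpha,
\]
whose right-hand side is smooth by hypothesis. The universal property of surjective submersions then yields smoothness of $\alpha$. This submersion argument is the only step that is not purely formal.

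For part (2), the map $\sigma = P \circ \tilde\sigma$ is automatically smooth as a composition of smooth maps, and the identity $\pi_{F/F_0} \circ \sigma = \pi_F \circ \tilde\sigma = \mathrm{id}_M$ (where $\pi_F$ and $\pi_{F/F_0}$ denote the bundle projections) confirms that it is a section. Equivariance is then a direct calculation: for all $g \in G$ and $p \in M$,
\[
\sigma(\alpha_0(g,p)) = P(\tilde\sigma(\alpha_0(g,p))) = P(\tilde\alpha(g, \tilde\sigma(p))) = \alpha(g, P(\tilde\sigma(p))) = \alpha(g, \sigma(p)),
\]
where the second equality is the equivariance of $\tilde\sigma$ and the third is the defining commutative square of $\alpha$ from part (1). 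Apart from careful bookkeeping of base points, I anticipate no further obstacles.
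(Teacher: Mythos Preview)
Your proposal is correct and complete; the paper itself does not give a proof, stating only that the lemma is ``straightforward and left to the reader.'' Your argument via the universal property of surjective submersions for smoothness in part~(1) and the direct verification in part~(2) is exactly the kind of routine check the authors had in mind.
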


\begin{proof}[Proof of Proposition \ref{aut_action}]
The vector bundle $E\to \Gr_k(\got g)$ is the quotient of $\Gr_k(\got g)\times (\got g^*\otimes\got g^*\otimes \got g)$ by the kernel 
\[ (\Gr_k(\got g)\times \got g^*)\otimes\Taut_k(\got g)^\circ\otimes(\Gr_k(\got g)\times \got g)\,\oplus \, (\Gr_k(\got g)\times \got g^*)\otimes(\Gr_k(\got g)\times \got g^*)\otimes\Taut_k(\got g).
\]
For $\alpha$ to be defined, it suffices hence to show that this vector subbundle of $\Gr_k(\got g)\times (\got g^*\otimes\got g^*\otimes \got g)$ is invariant under the action $\tilde \alpha$.
Take $(W,\omega)$ in the first summand, i.e.~$W\in\Gr_k(\got g)$ and $\omega\in\got g^*\otimes \got g^*\otimes \got g$ does $\omega(x, w)=0$ for all $x\in \got g$ and all $w\in W$.
Then for all $\Psi\in \Aut(\got g)$, for all $x\in \got g$ and all $w\in W$
\[(\Psi\circ \omega)\left(\Psi^{-1}(x), \Psi^{-1}(\Psi(w))\right)=\Psi\left(\omega\left(\Psi^{-1}(x), w\right)\right)=0,
\]
which shows that 
\[ \Psi\cdot(W,\omega)=\left(\Psi(W), \Psi\circ\omega\circ (\Psi^{-1}, \Psi^{-1})\right)
\]
lies again in $(\Gr_k(\got g)\times \got g^*)\otimes\Taut_k(\got g)^\circ\otimes(\Gr_k(\got g)\times \got g)$.
Similarly, the subspace $(\Gr_k(\got g)\times \got g^*)\otimes(\Gr_k(\got g)\times \got g^*)\otimes\Taut_k(\got g)$ is clearly preserved by the $\Aut(\got g)$-action on $\Gr_k(\got g)\times (\got g^*\otimes\got g^*\otimes \got g)$.

It remains therefore to check that $\tilde\sigma\colon \Gr_k(\got g)\to\Gr_k(\got g)\times (\got g^*\otimes \got g^*\otimes \got g)$ is $\Aut(\got g)$-equivariant. But this is immediate since $\tilde \sigma$ is the constant section 
\[ \tilde \sigma(W)=(W, \mu_{\got g})
\]
for all $W\in \Gr_k(W)$, and 
$\Psi\in \Aut(\got g)$ is by definition an isomorphism of $\got g$ preserving $\mu_{\got g}$:
\[\Psi\circ \mu_{\got g}\circ \left(\Psi^{-1}, \Psi^{-1}\right)=\mu_{\got g}.\]
\end{proof}

The differential at $\Id_{\got{g}}\in\Aut(\got{g})$  of the orbit map $\alpha^{\got{i}}\colon \Aut(\got{g})\to\Gr_{k}(\got{g})$ of the action $\alpha^{\res}$ is computed as follows.
First recall that$$T_{\Id_{\got{g}}}(\Aut({\got{g}}))=\got{aut}(\got{g})\eqdef \{\phi\in\got{gl}(\got{g}) \ | \ \phi([x,y])=[\phi(x),y]+[x,\phi(y)]\}=Z^1(\got{g};\got{g}).$$Therefore, $T_{\Id_{\got{g}}}\alpha^{\got{i}}\colon \got{aut}(\got{g})\to T_{\got{i}}\Gr_{k}(\got{g})$. Let $(\phi_t)_{t\in I}\in\Aut(\got{g})$ a smooth curve of automorphisms such that $\phi_0=\Id_{\got{g}}$. Then as an element of $T_{\got i}\Gr_k(\got g)=\got i^*\otimes \got g/\got i$,
\begin{align*}
	T_{\Id_{\got{g}}}\alpha^{\got{i}}\biggleftpar\fracddtz\phi_t\biggrightpar=\fracddtz\phi_t(\got{i})
	=\pi_{\got g/\got i}\circ \dot\phi(0)\arrowvert_{\got i},
\end{align*}
see the computation at the end of Appendix \ref{appendix_G_k}.
This shows that for all $\psi\in\got{aut}(\got{g})$, $T_{\Id_{\got{g}}}\alpha^{\got{i}}(\psi)=\pi_{\got{g}/\got{i}}\circ\psi|_{\got{i}}\in\Hom(\got{i},\got{g}/\got{i})\simeq T_{\got{i}}\Gr_{k}(\got{g})$.

\medskip
As a summary, the sequence \eqref{the sequence used for defining non-degenerace of zeros of a section} reads here 
\begin{eqnarray*}
Z^1(\got{g};\got{g})=\got{aut}(\got g) \longrightarrow &T_{\got i}\Gr_k(\got g)=\got i^*\otimes \got g/\got i&\longrightarrow E_{\got i}=\got g^*\otimes \got i^*\otimes \got g/\got i
\end{eqnarray*}
with the first map 
\[\psi \mapsto \pi_{\got g/\got i}\circ \psi\arrowvert_{\got i}
\] and the second map
\[\psi  \mapsto \delta^{\Hom}_{\got i\lhd \got g}\psi.\]

\bigskip

\begin{defin}\label{def_top_rig}
	An ideal $\got{i}\lhd(\got{g},\mu_{\got{g}})$ is called a \textbf{(topologically) rigid ideal} if the space of $k$-dimensional Lie ideals $\mbf{I}_k(\got{g})$ coincides locally, in some open neighborhood of $\got{i}\in\mbf{I}_k(\got{g})\subset\Gr_k(\got{g})$, with the $\Aut(\got{g})$-orbit of $\got{i}$. Namely, $\got{i}\lhd\got{g}$ is (topologically) rigid if there exists a neighborhood $U_{\got{i}^{\perp}}\in\Gr_{k}(\got{g})$ such that every $\got{i}'\in U_{\got{i}^{\perp}}\cap\mbf{I}_k(\got{g})$ belongs in the orbit of $\got{i}$: $\got{i}'=\Psi(\got{i})$, for some $\Psi\in\Aut(\got{g})$.
\end{defin}
\begin{rem}
Note that in the case of Lie subalgebras, two Lie subalgebras are considered equivalent
if they are related by an \emph{inner automorphism} of the ambient Lie algebra. As already mentioned, in the case of an ideal, this equivalence relation becomes trivial and consequently not interesting. The following theorem shows that the definition of rigidity above is more natural in the context of Lie ideals. Therefore this paper studies the moduli space of Lie ideals under this natural action of the automorphism group of the ambient Lie algebra on the space of $k$-dimensional Lie ideals.
\end{rem}

\begin{thm}\label{theorem for Aut(g)-rigidity of ideals}
	Let $\got{i}\lhd(\got{g},\mu_{\got{g}})$ be a Lie ideal. If $$H^0(\Pi)\colon  H^1_{\pi_{\got{g}/\got{i}}}(\got{g};\got{g}/\got{i})\to H^0(\got{i}\lhd\got{g}), \ [\phi]\mto\phi|_{\got{i}}$$is surjective, then $\got{i}\lhd\got{g}$ is a (topologically) rigid ideal with respect to the $\Aut(\got{g})$-action.
\end{thm}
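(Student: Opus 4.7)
The plan is to apply the abstract Proposition~\ref{proposition of openess of orbits} to the $\Aut(\got{g})$-equivariant section $\sigma\colon \Gr_k(\got{g}) \to E$ constructed in~\eqref{def_sigma_section}, whose zero locus is $\bm{I}_k(\got{g})$. Once $\got{i}\in \sigma^{-1}(0)$ is shown to be a non-degenerate zero of $\sigma$, that proposition yields an open neighborhood $U$ of $\got{i}$ and a smooth map $h\colon U\to \Aut(\got{g})$ such that $h(\got{i}')\cdot \got{i}=\got{i}'$ for every $\got{i}'\in U\cap \bm{I}_k(\got{g})$, which is exactly topological rigidity of $\got{i}$ under the $\Aut(\got{g})$-action.

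Verifying non-degeneracy reduces to checking exactness at the middle term of
\[
\got{aut}(\got{g}) \xrightarrow{T_{\Id}\alpha^{\got{i}}} T_{\got{i}}\Gr_k(\got{g}) \xrightarrow{\delta_{\got{g}\rhd\got{i}}^{\Hom}} E_{\got{i}},
\]
where $T_{\Id}\alpha^{\got{i}}(\psi)=\pi_{\got{g}/\got{i}}\circ\psi\arrowvert_{\got{i}}$ by the identifications in the text. The inclusion $\img T_{\Id}\alpha^{\got{i}} \subseteq \ker \delta_{\got{g}\rhd\got{i}}^{\Hom}$ is immediate from $\psi$ being a derivation and $\got{i}$ being an ideal. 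For the converse inclusion, fix $\eta \in Z^0(\got{i}\lhd\got{g}) = H^0(\got{i}\lhd\got{g})$ (there are no coboundaries in degree zero) and use the surjectivity hypothesis to obtain a cocycle $\phi \in Z^1_{\pi_{\got{g}/\got{i}}}(\got{g};\got{g}/\got{i})$ restricting to $\pm\eta$ on $\got{i}$. Pick a complement $\got{i}^c \subset \got{g}$ with the associated section $s\colon \got{g}/\got{i}\xrightarrow{\sim}\got{i}^c$ and set $\tilde\phi := s\circ \phi\colon \got g \to \got g$; the identity $\delta_{\pi_{\got{g}/\got{i}}}\phi=0$ forces $\delta_{\got{g}}^{\ad}\tilde\phi$ to take values in $\got{i}$, producing a class $[\delta_{\got{g}}^{\ad}\tilde\phi] \in H^2(\got{g};\got{i})$ which is precisely the image of $[\phi]$ under the connecting map of the long exact sequence induced by the short exact sequence of $\got{g}$-modules $\got{i}\hookrightarrow \got{g}\twoheadrightarrow \got{g}/\got{i}$. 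If this obstruction vanishes, choosing $\chi\in C^1(\got{g};\got{i})$ with $\delta_{\got{g}}^{\ad}\chi = \delta_{\got{g}}^{\ad}\tilde\phi$ yields a derivation $\psi := \pm(\tilde\phi-\chi)\in \got{aut}(\got{g})$ with $\pi_{\got{g}/\got{i}}\circ \psi\arrowvert_{\got{i}} = \eta$, completing the verification.

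The main obstacle is showing that this obstruction vanishes, possibly after correcting $\phi$ within its affine space of lifts of $\eta$ (which, by Proposition~\ref{lemma proving the s.e.s. relating deform. of ideals, canonical projection, and quotient Lie algebra}, differs by derivations of $\got{g}/\got{i}$). The strategy is a diagram chase through~\eqref{commutative diagram where the def. cohomology of an ideal lives}: the surjectivity of $H^0(\Pi)$ is equivalent, by exactness of the bottom long exact sequence, to the vanishing of the connecting map $\partial^0_{\got{g}/\got{i}}\colon H^0(\got{i}\lhd\got{g}) \to H^2(\got{g}/\got{i};\got{g}/\got{i})$; the commutativity with $H^2(\overline{\pi_*})$ then constrains the top connecting map $\partial^0_{\got{g}}\colon H^0(\got{i}\lhd\got{g}) \to H^2_{\got{i}}(\got{g};\got{g})$, and combined with the standard long exact sequence of $\got{g}$-modules a representative $\phi$ can be chosen for which the obstruction class is trivial in $H^2(\got{g};\got{i})$. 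With non-degeneracy secured, Proposition~\ref{proposition of openess of orbits} concludes the rigidity statement.
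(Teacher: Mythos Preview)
Your overall strategy—apply Proposition~\ref{proposition of openess of orbits} to the $\Aut(\got g)$-equivariant section $\sigma$ of~\eqref{def_sigma_section} and verify that $\got i$ is a non-degenerate zero—is exactly the paper's. The paper's verification of non-degeneracy, however, is far shorter than yours: it simply observes that $T_{\Id}\alpha^{\got i}$ factors as
\[
Z^1(\got g;\got g)\xrightarrow{(\pi_{\got g/\got i})_*} Z^1(\got g;\got g/\got i)\xrightarrow{\ \Pi\ } C^0(\got g;\got i^*\otimes\got g/\got i)
\]
and asserts that exactness of the non-degeneracy sequence amounts to $\img\bigl(\Pi|_{Z^1}\bigr)=Z^0$, which is surjectivity of $H^0(\Pi)$. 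No obstruction to lifting along $(\pi_{\got g/\got i})_*$ is ever discussed in the paper.

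Your route through $H^2(\got g;\got i)$ therefore goes beyond what the paper does, and the key step you isolate is not completed. Concretely: the hypothesis gives $\partial^0_{\got g/\got i}=0$, whence $H^2(\overline{\pi_*})\circ\partial^0_{\got g}=0$ by commutativity of~\eqref{commutative diagram where the def. cohomology of an ideal lives}; but this only forces $\img(\partial^0_{\got g})\subseteq\ker H^2(\overline{\pi_*})$, not $\partial^0_{\got g}=0$. You also invoke two distinct connecting maps without relating them precisely—the one from the $\got g$-module extension $\got i\hookrightarrow\got g\twoheadrightarrow\got g/\got i$, which is your actual lifting obstruction and lands in $H^2(\got g;\got i)$, versus $\partial^0_{\got g}$ of Proposition~\ref{prop. that the def. cohom. of an ideal fits in a commutative diagram of l.e.s.}, which lands in $H^2_{\got i}(\got g;\got g)$. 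Even granting that the latter is the pushforward of the former along $H^2(\got g;\got i)\to H^2_{\got i}(\got g;\got g)$, you would still need that map to be injective, or that the freedom of adjusting $\phi$ by $\got{aut}(\got g/\got i)$ suffices to kill the class in $H^2(\got g;\got i)$; neither is established. The sentence ``a representative $\phi$ can be chosen for which the obstruction class is trivial'' is precisely what needs proving and is only asserted. As written, the argument has a gap at its crux; the paper bypasses the issue entirely by arguing directly with the factorisation.
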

\begin{proof}
The vector bundle $(E\to\Gr_{k}(\got{g}),\Aut(\got{g}))$ is an $\Aut(\got{g})$-vector bundle and the section $\sigma\colon \Gr_{k}(\got{g})\to E$ is $\Aut(\got{g})$-equivariant by Proposition \ref{theorem for Aut(g)-rigidity of ideals}. The only assumption that remains to be checked, in order to apply Proposition \ref{proposition of openess of orbits}, is the non-degeneracy of the zero $\got{i}$ of $\sigma$. 
The map $T_{\Id_{\got{g}}}\alpha^{\got{i}}\colon \psi\mapsto \pi_{\got{g}/\got{i}}\circ\psi|_{\got{i}}$ factors as follows through the map $(\pi_{\got{g}/\got{i}})_*:Z^1(\got{g};\got{g})\to Z^1(\got{g};\got{g}/\got{i}), \ \psi\mto\pi_{\got{g}/\got{i}}\circ\psi$,
\begin{equation*}
	\begin{tikzcd}
			Z^1(\got{g};\got{g})\ar[d, "(\pi_{\got{g}/\got{i}})_*", dashed, swap] \ar["T_{\Id_{\got{g}}}\alpha^{\got{i}}", r] &  C^0(\got{g};\got{i}^*\otimes\got{g}/\got{i})  \ar[r, "\delta_{\got{g}\rhd\got{i}}^{\Hom}"] &  C^1(\got{g}; \got{i}^*\otimes\got{g}/\got{i})\\
			Z^1(\got{g};\got{g}/\got{i}) \ar[ur, dashed, "\Pi", swap] & & 
	\end{tikzcd}
\end{equation*}
with $\Pi$ as in Proposition  \ref{the lemma where we defined the cochain map from def. complex of can projection to the def. complex of ideals}: $$\Pi\colon C^{\bullet}(\got{g};\got{g}/\got{i})[1]\to C^{\bullet}(\got{g};\got{i}^*\otimes\got{g}/\got{i}), \ f\mto f|_{\wedge^{\bullet}\got{g}\wedge\got{i}},$$ restricted at degree 0 cocycles. The above sequence is exact if $\img(\Pi)\subset C^0(\got{g};\got{i}^*\otimes\got{g}/\got{i})$ is equal to $\Ker(\delta^{\Hom}_{\got{g}\rhd{\got{i}}})=Z^0(\got{g};\got{i}^*\otimes\got{g}/\got{i})$. This is exactly the surjectivity of $H^0(\Pi)$.
\end{proof}
The last theorem and Proposition \ref{prop. that the def. cohom. of an ideal fits in a commutative diagram of l.e.s.} have together the following corollary.
\begin{cor}
	If $H^2(\got{g}/\got{i};\got{g}/\got{i})=0$, then $\got{i}\lhd\got{g}$ is (topologically) rigid under the $\Aut(\got{g})$-action.
\end{cor}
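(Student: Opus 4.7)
The plan is to apply Theorem \ref{theorem for Aut(g)-rigidity of ideals}, so the only thing to verify is that the vanishing hypothesis $H^2(\got{g}/\got{i};\got{g}/\got{i})=0$ forces the map
\[
H^0(\Pi)\colon H^1_{\pi_{\got g/\got i}}(\got g;\got g/\got i)\longrightarrow H^0(\got i\lhd \got g)
\]
to be surjective. Once surjectivity is established, Theorem \ref{theorem for Aut(g)-rigidity of ideals} directly yields the topological rigidity of $\got i\lhd\got g$ under the $\Aut(\got g)$-action.

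First, I would invoke Proposition \ref{prop. that the def. cohom. of an ideal fits in a commutative diagram of l.e.s.}. The bottom long exact sequence, evaluated at $k=1$, contains the fragment
\[
H^1(\got g;\got g/\got i)\xrightarrow{\;H^0(\Pi_\wedge)\;} H^0_\wedge(\got g;\got i^*\otimes\got g/\got i)\xrightarrow{\;\partial^0_{\got g/\got i}\;} H^2(\got g/\got i;\got g/\got i).
\]
The hypothesis makes the right-hand term vanish, and so $H^0(\Pi_\wedge)$ is automatically surjective.

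Second, I would identify $H^0_\wedge(\got g;\got i^*\otimes\got g/\got i)$ with $H^0(\got i\lhd\got g)=H^0_{\delta^{\Hom}_{\got g\rhd\got i}}(\got g;\got i^*\otimes\got g/\got i)$. This uses the observation already recorded in the Remark following Proposition \ref{the lemma where we defined the cochain map from def. complex of can projection to the def. complex of ideals}: in degree $0$ the inclusion $C^\bullet_\wedge(\got g;\got i^*\otimes\got g/\got i)\hookrightarrow C^\bullet(\got g;\got i^*\otimes\got g/\got i)$ is the identity, and since there are no coboundaries in degree $0$, the $0$-cohomologies coincide. Under this identification, $H^0(\Pi_\wedge)$ agrees with $H^0(\Pi)$, so the required surjectivity follows, and Theorem \ref{theorem for Aut(g)-rigidity of ideals} concludes the proof.

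I do not anticipate any genuine obstacle here: the corollary is essentially an assembly of two results whose combination is dictated by the statement itself. The only point requiring care is the harmless (but necessary) identification of the two versions of $H^0$, which is immediate from the explicit description of the complexes in degree $0$.
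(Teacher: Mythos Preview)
Your proposal is correct and follows exactly the approach the paper indicates: the paper simply states that the corollary follows from Theorem \ref{theorem for Aut(g)-rigidity of ideals} together with Proposition \ref{prop. that the def. cohom. of an ideal fits in a commutative diagram of l.e.s.}, and you have spelled out precisely how these combine, including the identification $H^0_\wedge(\got g;\got i^*\otimes\got g/\got i)=H^0(\got i\lhd\got g)$ from the Remark after Proposition \ref{the lemma where we defined the cochain map from def. complex of can projection to the def. complex of ideals}.
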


\begin{ex}
This shows that the radical ideal $\operatorname{Rad}(\got g)$ of a finite-dimensional Lie algebra $\got g$ is topologically rigid with respect to the $\Aut(\got{g})$-action, since the quotient $\got g/\operatorname{Rad}(\got g)$ is semi-simple and so 
\[ H^2\left(\frac{\got{g}}{\operatorname{Rad}(\got g)};\frac{\got{g}}{\operatorname{Rad}(\got g)}\right)=0,
\]
by Whitehead's (second) lemma, see e.g.~\cite{Jacobson-Nathan-book-on-Lie-algebras-1962}.
\end{ex}

\subsection{Stability of ideals in Lie algebras}

This section is inspired from the study in \cite{Crainic-Ivan-Schaetz} of the stability of Lie algebras, Lie algebra morphisms and Lie subalgebras. Similar techniques are developed to obtain a stability result for Lie ideals.

\begin{prop}(Stability of zeros \cite{Crainic-Ivan-Schaetz})\label{proposition of stability of zeros}
	Let $E\to M$ and $F\to M$ be two vector bundles over the same base, $\sigma\in\Gamma(E)$ and $\tau\in\Gamma(\Hom(E,F))$ satisfying $\tau\circ\sigma=0$. In addition, let $x\in\sigma^{-1}(0)$ and assume that the following sequence is exact:
	\begin{equation*}
		\begin{tikzcd}
			T_xM \ar[r, "T_x^{\ver}\sigma"] & E_x \ar[r, "\tau_x"] & F_{x}.
		\end{tikzcd}
	\end{equation*}
	Then the following statements hold true:
	\begin{enumerate}
		\item $\sigma^{-1}(0)$ is locally a manifold around $x$ of dimension equal to $\dim\left(\Ker(T_x^{\ver}\sigma)\right)$.
		\item For each open neighborhood $U$ of $x$ in $M$ there exist $C^0$-open subsets $V\subseteq \Gamma(E)$ and $W\subseteq \Gamma(\Hom(E,F))$ around $\sigma$ and $\tau$, respectively, such that 
		for all  $\sigma'\in V$ and $\tau' \in W$ with $\tau'\circ\sigma'=0$, there exists $x'\in U$ such that $\sigma'(x')=0$.
		\item In the situation of (2) there exists as well a $C^1$-open subset $V^1\subseteq V$ around $\sigma$ such that if $\sigma'\in V^1$, then the zero set of $\sigma'$ is also locally a manifold around $x'$, of the same dimension as $\sigma^{-1}(0)$.
	\end{enumerate}
\end{prop}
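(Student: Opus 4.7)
The plan is to reduce the situation locally to a finite-dimensional function-theoretic setting and then apply the implicit function theorem, using the exactness hypothesis to eliminate the ``obstruction directions'' in $E_x$. After trivializing $E$ and $F$ over a small neighbourhood of $x\in M$, the section $\sigma$ and the bundle morphism $\tau$ become smooth maps into the fixed fibres $E_x$ and $F_x$. Differentiating the equation $\tau\circ\sigma=0$ at $x$ and using $\sigma(x)=0$ yields $\tau_x\circ T_x^{\ver}\sigma=0$, which together with the exactness assumption forces $\img(T_x^{\ver}\sigma)=\ker(\tau_x)$.

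For part (1), I would fix linear complements $E_x=\ker(\tau_x)\oplus C$ and $T_xM=\ker(T_x^{\ver}\sigma)\oplus H$, so that $\tau_x|_C\colon C\to F_x$ is injective and $T_x^{\ver}\sigma|_H\colon H\to\ker(\tau_x)$ is a linear isomorphism. Decomposing $\sigma=\sigma_K+\sigma_C$ according to the splitting of $E_x$, the equation $\tau_y(\sigma_K(y)+\sigma_C(y))=0$ can be solved uniquely for $\sigma_C(y)$ as a smooth function of $\sigma_K(y)$ for $y$ close to $x$, using that $\tau_y|_C$ remains injective near $x$ (by openness of injectivity in the operator norm). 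In particular $\sigma_C$ vanishes whenever $\sigma_K$ does, and so $\sigma^{-1}(0)=\sigma_K^{-1}(0)$ locally around $x$. In coordinates $(s,t)\in K\times H$ centred at $x$, the partial derivative $D_H\sigma_K(0,0)$ coincides with $T_x^{\ver}\sigma|_H$ and is therefore an isomorphism; the implicit function theorem then produces a smooth map $\psi\colon K\to H$ with $\psi(0)=0$ such that $\sigma_K^{-1}(0)$ is the graph $\{(s,\psi(s))\}$, a submanifold of dimension $\dim K=\dim\ker(T_x^{\ver}\sigma)$.

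For parts (2) and (3), the whole reduction above is stable under small perturbations: for $(\sigma',\tau')$ sufficiently $C^0$-close to $(\sigma,\tau)$, the map $\tau'_y|_C$ is still injective near $x$, and the same argument gives $(\sigma')^{-1}(0)=(\sigma'_K)^{-1}(0)$ locally. To obtain existence of a zero of $\sigma'$ in $U$, I would observe that the auxiliary map $\tilde\sigma_K(s,t):=(s,\sigma_K(s,t))$ is a local diffeomorphism $K\times H\to K\times\ker(\tau_x)$ sending the origin to the origin; a Brouwer degree (or Brouwer fixed-point) argument on a small closed ball around the origin then shows that any continuous map $C^0$-sufficiently close to $\tilde\sigma_K$ still hits the origin, and applied to $\tilde\sigma'_K(s,t):=(s,\sigma'_K(s,t))$ this produces the desired zero $x'\in U$ of $\sigma'$, giving (2). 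For (3), the additional $C^1$-closeness ensures that $D_H\sigma'_K$ remains invertible in a whole neighbourhood of $x'$, and a second application of the implicit function theorem to $\sigma'_K$ at $x'$ yields the submanifold structure of $(\sigma')^{-1}(0)$ with the same dimension $\dim K$ as $\sigma^{-1}(0)$. The main obstacle is precisely the $C^0$-existence statement in (2), which genuinely requires a topological (degree-theoretic) argument rather than just the implicit function theorem, since $C^0$-closeness provides no control on derivatives and hence no direct access to the inverse function theorem; the degree argument succeeds here because the reduced map $\tilde\sigma_K$ is a genuine local diffeomorphism near $x$.
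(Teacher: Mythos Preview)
The paper does not prove this proposition; it is quoted from \cite{Crainic-Ivan-Schaetz} and used as a black box. Your sketch is a correct and essentially standard proof, and it is the approach taken in the cited reference: reduce to the trivialized picture, use exactness and the constraint $\tau\circ\sigma=0$ to show that near $x$ the zero set of $\sigma$ coincides with that of its projection $\sigma_K$ onto $\ker(\tau_x)$, apply the implicit function theorem to $\sigma_K$ for part~(1), and use a degree argument for the $C^0$-stability in part~(2). One phrasing to clean up: your sentence ``the equation $\tau_y(\sigma_K(y)+\sigma_C(y))=0$ can be solved uniquely for $\sigma_C(y)$ as a smooth function of $\sigma_K(y)$'' is slightly misleading --- what you actually need (and later use) is only the implication $\sigma_K(y)=0\Rightarrow\sigma_C(y)=0$, which follows directly from injectivity of $\tau_y|_C$ and $\tau_y(\sigma(y))=0$; no implicit-function step is needed there.
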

In the following, $E\to \Gr_k(\got g)$ is the vector bundle already defined in \eqref{def_E} for the rigidity of a Lie ideal $\got{i}\lhd\got{g}$, and $\sigma$ is its section $\sigma\colon \Gr_k(\got{g})\to E, \ W\mto\sigma_W\in E\arrowvert_{W}$ with $\sigma_W(x,w)=\pi_{\got{g}/W}([x,w])$ for all $x\in \got g$ and all $w\in W$. 
The second vector bundle $\pi_F\colon F\to M$ is defined to be the vector bundle over $\Gr_k(\got{g})$ with fiber over $W\in\Gr_k(\got{g})$ defined by $F\arrowvert_W\eqdef\{W\}\times\Hom(\wedge^2\got{g};W^*\otimes\got{g}/W)$, i.e.~$F$ is the smooth vector bundle
\[ F=(\Gr_k(\got g)\times \wedge^2\got g^*)\otimes (\operatorname{Taut}_k(\got g))^*\otimes \frac{\Gr_k(\got g)\times\got g}{\operatorname{Taut}_k(\got g)} \longrightarrow \Gr_k(\got g).
\]
Like $E$, the vector bundle $F$ is a quotient vector bundle 
\[\begin{tikzcd}
	{\Gr_k(\got g)\times (\wedge^2\got g^*\otimes\got g^*\otimes \got g)} &&& F\\
	&& {\Gr_k(\got g)}
	\arrow["P_F", from=1-1, to=1-4]
	\arrow["{\pr_1}"', from=1-1, to=2-3]
	\arrow["\pi_F", from=1-4, to=2-3]
\end{tikzcd}\]
with the smooth quotient map $P_F$ sending $(W,\eta)$ to 
\[ (W, \pi_{\got g/W}\circ \eta\arrowvert_{\wedge^2\got g\otimes W}).
\]

Consider the vector bundle morphism \[
\tilde \tau\colon \Gr_k(\got g)\times(\got g^*\otimes \got{gl}(\got g))\to\Gr_k(\got g)\times(\wedge^2\got g^*\otimes \got{gl}(\got g)) \]
 over the identity $\id_{\Gr_k(\got g)}$, which is defined by 
 \[ \tilde \tau(W, \phi)\mapsto (W, \delta_{\ad^\got g}\phi)
 \]
 for all $W\in \Gr_k(\got g)$ and all $\phi\in \got g^*\otimes \got{gl}(\got g)$.
 Recall here that $\delta_{\ad^\got g}\colon \wedge^\bullet\got g^*\otimes\got{gl}(\got g)\to \wedge^{\bullet+1}\got g^*\otimes\got{gl}(\got g)$
 is the differential defined in Section \ref{def_subrep_adjoint_subsec} by the representation $r\eqdef(\ad^{\got{g}})^*\left(\ad^{\got{gl}(\got{g})}\right)$ of $\got g$ on $\got{gl}(\got g)$:
 \[ r\colon \got g\times \got{gl}(\got g)\to \got{gl}(\got g), \qquad (r_x(\phi))(y)=[x,\phi(y)]-\phi[x,y]
 \]
 for all $x, y\in\got g$ and all $\phi\in \got{gl}(\got g)=\got g^*\otimes \got g$.

 The smooth vector bundle morphism
$$\tau:=P_F\circ \tilde \tau\circ S\colon E\to F,$$
with $S\colon E\to \Gr_k(\got g)\times (\got g^*\otimes\got g^*\otimes \got g)$ defined as in Appendix \ref{triv_E} by a choice of inner product on $\got g$, 
then sends $(W,\eta)\in E\arrowvert_{W}=\{W\}\times (\got g^*\otimes W^*\otimes \got g/W)$ to 
\[(W, \pi_{\got g/W}\circ (\delta_{\ad^\got g}(S_W(\eta))\arrowvert_{\wedge^2\got g\otimes W})) \in \{W\}\otimes (\wedge^2\got g^*\otimes W^*\otimes \got g/W)=F\arrowvert_{W}.\]
The form $\pi_{\got g/W}\circ (\delta_{\ad^\got g}(S_W(\eta))\arrowvert_{\wedge^2\got g\otimes W})=:\tau_W(\eta)\in \wedge^2\got g^*\otimes W^*\otimes \got g/W$ is explicitly
defined by 
\begin{align*}
	\tau_W(\eta)(x,y,w)=&\pi_{\got{g}/W}([x, s_{W}(\eta(y,w))])-\eta(y,\pr_W[x,w])\\
	&-\pi_{\got{g}/W}([y,s_{W}(\eta(x,w))])+\eta(x,\pr_W[y,w])-\eta([x,y],w)
\end{align*}
for $x,y\in\got{g}$ and $w\in W$.
Write again $\sigma_W$ for the second component $\pi_{\got g/W}\circ \mu_{\got g}\arrowvert_{\got g\otimes W}$ of the image under $\sigma \colon \Gr_k(\got g)\to E$ of $W\in\Gr_k(\got g)$. Then, using the Jacobi identity for $\mu_{\got g}$
\begin{equation}\label{tau_sigma}
\begin{split}
	\tau_W(\sigma_W)(x,y,w)=&\pi_{\got{g}/W}[x,s_{W}\circ\pi_{\got{g}/W}[y,w]]-\pi_{\got{g}/W}[y,\pr_{W}[x,w]]\\
	&-\pi_{\got{g}/W}[y,s_{W}\circ\pi_{\got{g}/W}[x,w]]+\pi_{\got{g}/W}[x,\pr_{W}[y,w]]-\pi_{\got{g}/W}[[x,y],w]\\
	=&\pi_{\got{g}/W}[x,\pr_{\operatorname{orth}(W)}[y,w]]-\pi_{\got{g}/W}[y,\pr_{W}[x,w]]\\
	&-\pi_{\got{g}/W}[y,\pr_{\operatorname{orth}(W)}[x,w]]+\pi_{\got{g}/W}[x,\pr_{W}[y,w]]-\pi_{\got{g}/W}[[x,y],w]\\
	=&\pi_{\got{g}/W}([x,[y,w]]-[y,[x,w]]-[[x,y],w])=0
\end{split}
\end{equation}
for all $x,y\in \got g$ and all $w\in W$.
This shows that $\tau_W\circ\sigma_W=0$ for all $W\in\Gr_{k}(\got{g})$ and so that 
\[ \tau\circ \sigma=0.
\]

\begin{rem}\label{tau_to_sec_general}
Each element $\eta\in\wedge^2\got g^*\otimes \got g$ defines a linear map
\[ \delta_{\eta}\colon \got g^*\otimes \got g^*\otimes \got g\to \wedge^2\got g^*\otimes \got g^*\otimes \got g
\]
by 
\[ \delta_\eta(\nu)(x_1,x_2, x)= \eta(x_1, \nu(x_2,x))-\nu(x_2, \eta(x_1,x))-\eta(x_2, \nu(x_1,x))+\nu(x_1, \eta(x_2,x))-\nu(\eta(x_1,x_2),x)
\]
for all $\nu\in \got g^*\otimes \got g^*\otimes \got g$ and $x_1,x_2,x\in \got g$, and the map
\[ \wedge^2\got g^*\otimes \got g\to \Hom\left(\got g^*\otimes \got g^*\otimes \got g, \wedge^2\got g^*\otimes \got g^*\otimes \got g\right), \qquad \eta\to \delta_\eta
\]
is again linear. Hence it defines a linear map 
\[  \wedge^2\got g^*\otimes \got g\to \Gamma\left(\Hom\left(\Gr_k(\got g)\times(\got g^*\otimes \got g^*\otimes \got g), \Gr_k(\got g)\times(\wedge^2\got g^*\otimes \got g^*\otimes \got g)\right)\right), \qquad \eta\to \widetilde{\delta_\eta},
\]
where for $\psi\in  \Hom\left(\got g^*\otimes \got g^*\otimes \got g, \wedge^2\got g^*\otimes \got g^*\otimes \got g\right)$, the section \[\widetilde\psi\in \Gamma\left(\Hom\left(\Gr_k(\got g)\times(\got g^*\otimes \got g^*\otimes \got g), \Gr_k(\got g)\times(\wedge^2\got g^*\otimes \got g^*\otimes \got g)\right)\right)\] is the constant section defined by $\psi$.
The map  
\[\Tau\colon \wedge^2\got g^*\otimes \got g\to \Gamma\left(\Hom(E,F)\right), \qquad  \eta\mapsto P_F\circ \widetilde{\delta_\eta}\circ S
\]
is then also $\mathbb R$-linear since $P_F\colon \Gr_k(\got g)\times(\wedge^2\got g^*\otimes \got g^*\otimes \got g)\to F$ and $S\colon E\to \Gr_k(\got g)\times(\got g^*\otimes \got g^*\otimes \got g)$ are vector bundle morphisms over the identity on $\Gr_k(\got g)$.
\end{rem}

\begin{lem}\label{lemma for continuity of the product map used for stability of ideals}
	Let $\Lambda\subset\Hom(\wedge^2\got{g};\got{g})$ be the space of Lie brackets on the vector space $\got{g}$. 
	Consider the map
	\begin{equation}\label{the product map is used for the stability of Lie ideals}
	\Theta\colon	\Lambda\to\Gamma(E)\times\Gamma(\Hom(E,F)), \quad \mu\mto\bigleftpar\Sigma(\mu),\Tau(\mu)\bigrightpar,
	\end{equation}
	with $\Sigma\colon \wedge^2\got g^*\otimes \got g\to \Gamma(E)$ the linear map defined in Remark \ref{sigma_to_sec_general} and $\Tau\colon\wedge^2\got g^*\otimes\got g\to \Gamma(F)$ the linear map defined in 
	Remark \ref{tau_to_sec_general}. 
	Then $\Theta$ is continuous with respect to the product compact-open $C^k$-topology on $\Gamma(E)\times\Gamma(\Hom(E,F))$, for any $k\geq 0$.
\end{lem}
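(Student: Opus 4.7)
The plan is to reduce the statement to a standard fact from functional analysis: every $\mathbb{R}$-linear map from a finite-dimensional Hausdorff topological vector space into any Hausdorff topological vector space is automatically continuous. The key observation is that both coordinates of $\Theta$ are \emph{linear} in $\mu$, not just smooth or polynomial, so no further estimates are needed.

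First I would invoke the universal property of the product topology: $\Theta$ is continuous if and only if both coordinate maps $\Sigma\colon \Lambda \to \Gamma(E)$ and $\Tau\colon \Lambda \to \Gamma(\Hom(E,F))$ are continuous. Since $\Lambda$ is endowed with the subspace topology coming from the finite-dimensional ambient vector space $\wedge^2\got{g}^*\otimes\got{g}$, and continuity is preserved under restriction, it suffices to check continuity of the extensions of $\Sigma$ and $\Tau$ to the whole space $\wedge^2\got{g}^*\otimes\got{g}$. By Remarks \ref{sigma_to_sec_general} and \ref{tau_to_sec_general}, both of these extensions are $\mathbb{R}$-linear.

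Next I would note that since $\Gr_k(\got{g})$ is a \emph{compact} smooth manifold, the compact-open $C^k$-topology on $\Gamma(E)$ and on $\Gamma(\Hom(E,F))$ coincides with the usual $C^k$-topology, defined by the finite collection of $C^k$-seminorms associated with a finite atlas of local trivializations. With these topologies, $\Gamma(E)$ and $\Gamma(\Hom(E,F))$ are Hausdorff topological vector spaces (Banach spaces for $k<\infty$, Fr\'echet for $k=\infty$).

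Finally, applying the classical fact that every linear map from a finite-dimensional Hausdorff topological vector space into a Hausdorff topological vector space is continuous (see, e.g., Rudin's \emph{Functional Analysis}) finishes the proof. There is no real obstacle here beyond setting up the reduction carefully; the substantive content lies entirely in the linearity statements of Remarks \ref{sigma_to_sec_general} and \ref{tau_to_sec_general}, which have already been established, together with the compactness of the Grassmannian.
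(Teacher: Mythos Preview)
Your proposal is correct and follows essentially the same route as the paper: reduce to the two coordinate maps, extend from $\Lambda$ to the ambient finite-dimensional space $\wedge^2\got{g}^*\otimes\got{g}$ using the subspace topology, and then invoke the automatic continuity of linear maps out of a finite-dimensional Hausdorff topological vector space (the paper cites Rudin, Lemma~1.20). Your extra remarks on the product topology and on compactness of $\Gr_k(\got g)$ making the target spaces Banach/Fr\'echet are sound but not strictly needed---the paper simply treats $\Gamma(E)$ and $\Gamma(\Hom(E,F))$ as topological vector spaces without further comment.
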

\begin{proof}
	The maps $\Sigma\colon \wedge^2\got g^*\otimes \got g\to\Gamma(E)$ and $\phi\colon\wedge^2\got g^*\otimes \got g\to\Gamma(\Hom(E,F))$ are continuous as linear maps from finite dimensional Hausdorff topological vector spaces to topological vector space, see e.g.~\cite[Lemma 1.20]{Rudin-Functional-Analysis-book-2nd-edition-91'}.
	The natural inclusion $i\colon \Lambda\to\wedge^2\got{g}^*\otimes\got{g}$ is continuous, since $\Lambda$ is equipped with the subspace topology, and so $\Sigma\arrowvert_{\Lambda}$ and $\Tau\arrowvert_{\Lambda}$ are continuous by the following commutative diagrams.
\[\begin{tikzcd}
	\Lambda && {\Gamma(E)} && \Lambda && {\Gamma(\Hom(E,F))} \\
	& {\wedge^2\got g^*\otimes \got g} &&&& {\wedge^2\got g^*\otimes \got g}
	\arrow["\Sigma\arrowvert_{\Lambda}", from=1-1, to=1-3]
	\arrow["i"{description}, hook, from=1-1, to=2-2]
	\arrow["\Tau\arrowvert_{\Lambda}", from=1-5, to=1-7]
	\arrow["i"{description}, hook, from=1-5, to=2-6]
	\arrow["\Sigma"{description}, from=2-2, to=1-3]
	\arrow["\Tau"{description}, from=2-6, to=1-7]
\end{tikzcd}\]
\end{proof}
Roughly speaking, an ideal $\got{i}\lhd(\got{g},\mu_{\got{g}})$ is (topologically) stable if for any Lie bracket $\mu'$ \emph{close enough} to $\mu_{\got{g}}$, there exists a Lie ideal $\got{i}'\lhd(\got{g},\mu')$ \emph{close enough} to $\got{i}\in\Gr_{k}(\got{g})$. 
The precise definition is the following.
\begin{defin}
	An ideal $\got{i}\lhd(\got{g},\mu_{\got{g}})$ in a Lie algebra $(\got g, \mu_{\got g})$ is called a \textbf{(topologically) stable ideal} if for each neighborhood $U\subset\Gr_{k}(\got{g})$ of $\got{i}$ in $\Gr_k(\got g)$ there exists a neighborhood $V\subset\Lambda$ of $\mu_{\got{g}}$ in $\Lambda$ such that for every $\mu'\in V$ there exists $\got{i}'\in U$ with $\got{i}'\lhd(\got{g},\mu')$.
\end{defin}
\begin{thm}\label{theorem stability ideals}
	Let $\got{i}\lhd(\got{g},\mu_{\got{g}})$ be an ideal. If $H^1(\got{i}\lhd\got{g})=0$, then $\got{i}$ is a (topologically) stable ideal. Furthermore, in this case, the space of $k$-dimensional Lie ideals $\mbf{I}_k(\got{g})\subset\Gr_k(\got{g})$ is locally around $\got{i}$, a manifold of dimension equal to $\dim\bigleftpar Z^0(\got{i}\lhd\got{g})\bigrightpar$. In particular, $Z^0(\got{i}\lhd\got{g})$ is the tangent space of $\mbf{I}_k(\got g)$ at $\got i$ and each infinitesimal deformation is a deformation class.
\end{thm}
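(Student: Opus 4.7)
The strategy is to apply the stability-of-zeros Proposition \ref{proposition of stability of zeros} to the $\Aut(\got g)$-equivariant section $\sigma\colon \Gr_k(\got g)\to E$ whose zero set is $\mbf I_k(\got g)$, together with the vector bundle morphism $\tau\colon E\to F$ constructed above. The required input is that $\tau\circ\sigma=0$ (already verified in \eqref{tau_sigma}) and exactness of
\[
T_{\got i}\Gr_k(\got g)\xrightarrow{\,T^{\ver}_{\got i}\sigma\,} E_{\got i}\xrightarrow{\,\tau_{\got i}\,} F_{\got i}.
\]
We already know $T^{\ver}_{\got i}\sigma=\delta^{\Hom}_{\got g\rhd\got i}$, whose image is $B^1(\got i\lhd\got g)$. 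The key step is therefore to identify $\ker\tau_{\got i}$ with $Z^1(\got i\lhd\got g)$.

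First I would carry out the explicit computation of $\tau_{\got i}$. Because $\got i\lhd\got g$ is an ideal, for $u\in\got i$ and $x\in\got g$ one has $[x,u]\in\got i$, so $\pr_{\got i}[x,u]=[x,u]$ and the ``section'' $s_{\got i}$ effectively disappears under the projection $\pi_{\got g/\got i}$. Plugging $W=\got i$ into the formula defining $\tau_W$ reduces it, for $\eta\in E_{\got i}=\got g^*\otimes \got i^*\otimes \got g/\got i$ and $x,y\in\got g$, $u\in\got i$, to
\[
\tau_{\got i}(\eta)(x,y,u)=\ad^{\got g/\got i}_x\eta(y,u)-\ad^{\got g/\got i}_y\eta(x,u)-\eta(y,[x,u])+\eta(x,[y,u])-\eta([x,y],u),
\]
which by \eqref{formula of the differential on the def. complex of ideals} is exactly $\delta^{\Hom}_{\got g\rhd\got i}(\eta)(x,y,u)$. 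Hence $\ker\tau_{\got i}=Z^1(\got g;\got i^*\otimes\got g/\got i)$, and the hypothesis $H^1(\got i\lhd\got g)=0$ yields $\ker\tau_{\got i}=B^1=\img(T^{\ver}_{\got i}\sigma)$, giving the required exactness.

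Next I would apply Proposition \ref{proposition of stability of zeros} (1), which yields that $\sigma^{-1}(0)=\mbf I_k(\got g)$ is, locally around $\got i$, a smooth submanifold of $\Gr_k(\got g)$ of dimension $\dim\ker(T^{\ver}_{\got i}\sigma)=\dim Z^0(\got i\lhd\got g)$, with tangent space $T_{\got i}\mbf I_k(\got g)=Z^0(\got i\lhd\got g)$. Every $0$-cocycle is then the velocity at $0$ of a smooth curve in $\mbf I_k(\got g)$, i.e.\ of a smooth deformation of $\got i$ as an ideal, proving that each infinitesimal deformation is a deformation class.

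Finally, for topological stability, I would appeal to part (2) of the same proposition together with Lemma \ref{lemma for continuity of the product map used for stability of ideals}. For each open $U_{\got i}\subseteq\Gr_k(\got g)$ containing $\got i$, choose $C^0$-open neighborhoods $\mathcal V\subseteq\Gamma(E)$ and $\mathcal W\subseteq\Gamma(\Hom(E,F))$ of $\sigma$ and $\tau$ as in (2). For any Lie bracket $\mu'\in\Lambda$ the Jacobi identity for $\mu'$ reproduces the calculation \eqref{tau_sigma} verbatim with $\mu_{\got g}$ replaced by $\mu'$, so $\Tau(\mu')\circ\Sigma(\mu')=0$. By continuity of the map $\Theta\colon\Lambda\to\Gamma(E)\times\Gamma(\Hom(E,F))$, the preimage $V:=\Theta^{-1}(\mathcal V\times\mathcal W)$ is an open neighborhood of $\mu_{\got g}$ in $\Lambda$. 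For any $\mu'\in V$, Proposition \ref{proposition of stability of zeros}(2) produces $\got i'\in U_{\got i}$ with $\Sigma(\mu')(\got i')=0$, i.e.\ $\got i'\lhd(\got g,\mu')$, proving stability. The main potential obstacle is purely bookkeeping around signs in the identification of $\tau_{\got i}$ with $\delta^{\Hom}_{\got g\rhd\got i}$ on degree-one cochains; the ideal property of $\got i$ makes this straightforward rather than delicate.
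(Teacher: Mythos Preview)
Your proposal is correct and follows essentially the same route as the paper: apply Proposition \ref{proposition of stability of zeros} to the pair $(\sigma,\tau)$, identify $\tau_{\got i}$ with $\delta^{\Hom}_{\got g\rhd\got i}$ on $C^1(\got g;\got i^*\otimes\got g/\got i)$ so that the exactness condition becomes $H^1(\got i\lhd\got g)=0$, and then use the continuity of $\Theta$ (Lemma \ref{lemma for continuity of the product map used for stability of ideals}) together with $\Tau(\mu')\circ\Sigma(\mu')=0$ for all $\mu'\in\Lambda$ to pull back the $C^0$-open sets from Proposition \ref{proposition of stability of zeros}(2). Your explicit verification that $\tau_{\got i}=\delta^{\Hom}_{\got g\rhd\got i}$ is slightly more detailed than the paper's, which just refers back to Section \ref{def_subrep_adjoint_subsec}.
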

\begin{proof}
	Consider the map $\Theta$ defined in \eqref{the product map is used for the stability of Lie ideals}
			\begin{equation*}
		\Theta\colon \Lambda\to\Gamma(E)\times\Gamma(\Hom(E,F)), \quad \mu\mto\bigleftpar\Sigma(\mu),\Tau(\mu)\bigrightpar.
	\end{equation*}
The same computation as \eqref{tau_sigma} but with $\mu\in\Lambda$ replacing  $[\cdot,\,\cdot]=\mu_{\got g}$ shows that the identity $\Tau(\mu)\circ\Sigma(\mu)=0$ is satisfied for all Lie brackets $\mu\in \Lambda$ on $\got g$.

It is easy to see using Section \ref{def_subrep_adjoint_subsec} that $\tau=\Tau(\mu_{\got g})$ restricted to \[E\arrowvert_{\got i}=\got g^*\otimes \got i^*\otimes \got g/\got i\longrightarrow  F\arrowvert_{\got i}=\wedge^2\got g^*\otimes \got i^*\otimes \got g/\got i\]
is the linear map $\delta_{\got{g}\rhd\got{i}}^{\Hom}$. 
Hence the sequence of Proposition \ref{proposition of stability of zeros} at the point $\got{i}\in\mbf{I}_k(\got{g})$ is given by
	\begin{equation*}
		\begin{tikzcd}
			 C^0(\got{g};\got{i}^*\otimes\got{g}/\got{i}) \ar[r, "\delta_{\got{g}\rhd\got{i}}^{\Hom}"] &  C^1(\got{g};\got{i}^*\otimes\got{g}/\got{i}) \ar[r, "\delta_{\got{g}\rhd\got{i}}^{\Hom}"] &  C^2(\got{g};\got{i}^*\otimes\got{g}/\got{i})
		\end{tikzcd}
	\end{equation*}
and  it is exact if and only if $H^1(\got{i}\lhd\got{g})=0$. Hence Proposition \ref{proposition of stability of zeros} can be applied here. 

Choose an open subset $U\subseteq \Gr_k(\got g)$ around $\got i\in \Gr_k(\got g)$. Then there exist $V\subseteq \Gamma(E)$ and $W\subseteq\Gamma(\Hom(E,F))$ $C^0$-open around $\sigma$ and $\tau$ as in (2) of Proposition \ref{proposition of stability of zeros}.
The continuity of the map \eqref{the product map is used for the stability of Lie ideals} guarantees that the pre-image $\Theta^{-1}(V\times W)$ of $V\times W$ under $\Theta$ in the $C^0$ compact-open topology is an open neighborhood $O_{\mu_{\got{g}}}\subset\Lambda$ around $\mu_{\got{g}}$. Let $\mu'\in O_{\mu_{\got{g}}}$ and consider $\sigma'\eqdef\Sigma(\mu')$ and $\tau'\eqdef\Tau(\mu')$. Recall that then $\sigma'\circ\tau'=0$, due to the Jacobi identity. Hence by (2) of Proposition \ref{proposition of stability of zeros}  there exists $\got{i}'\in U$ such that $\sigma'(\got{i}')=0$. \end{proof}

\begin{rem}\label{theorem stability ideals rem}
Note that this result can also be deduced from \cite[Theorem 3.20]{Singh25}: the differential graded algebra $(C^\bullet(\got g; \got g)[1], \llbracket\cdot\,,\cdot\rrbracket, 0)$ of Section \ref{sec_LA_MC} has the differential graded Lie subalgebra $C_{\got i}^\bullet(\got g;\got g)[1]$ defined in  Subsection \ref{rel_nij_ric}. A choice of complement $\got i^c$ for $\got i $ in $\got g$ defines splittings $\sigma_0\colon C^1(\got g; \got g)/C^1_{\got i}(\got g; \got g)\simeq \operatorname{Hom}(\got i, \got g/\got i)\to C^1(\got g; \got g)$ in degree $0$ and 
$\sigma_1\colon C^2(\got g; \got g)/C^2_{\got i}(\got g; \got g)\simeq C^1_\wedge(\got g; \got i^*\otimes \got g/\got i)\to C^2(\got g; \got g)$ in degree $1$. The Lie bracket $\mu_{\got g}$  is a Maurer-Cartan element of $(C^\bullet(\got g; \got g)[1], \llbracket\cdot\,,\cdot\rrbracket, 0)$ that lies in $C^2_{\got i}(\got g; \got g)$ since $\got i$ is an ideal in $(\got g, \mu_{\got g})$. By Proposition \ref{proposition/definition for/of the complex related to deform of ideals in Nij-Rich}, the cokernel complex $\left(\frac{ C^{\bullet}(\got{g};\got{g})[1]}{ C^{\bullet}_{\got{i}}(\got{g};\got{g})[1]}, -\overline{\delta_{\rm ad}}=-\overline{\llbracket \mu_{\got g}, \cdot\rrbracket}\right)$ of the inclusion $ C^{\bullet}_{\got{i}}(\got{g};\got{g})[1]\hookrightarrow C^{\bullet}(\got{g};\got{g})[1]$ is isomorphic to $\left(C_{\wedge}^{\bullet}(\got{g};\got{i}^*\otimes\got{g}/\got{i}), \delta^{\rm Hom}_{\got g\rhd \got i}\right)$. Hence according to Theorem 3.20 in \cite{Singh25}, if 
$H^1_\wedge(\got g, \got i^*\otimes\got g/\got i)=0$, 
then for every open neighborhood $V$ of $0\in  C^1(\got g; \got g)/C^1_{\got i}(\got g; \got g)\simeq \operatorname{Hom}(\got i, \got g/\got i)$ (hence for every open neighborhood of $\got i$ in $\Gr_k(\got g)$), there exists an open neighborhood $U$ of $\mu_{\got g}$ in $\Lambda$ 
such that for any $\mu 
\in U$
there exists a family $I\subseteq V$  diffeomorphic to an open neighborhood of $0\in Z^0_{\wedge}(\got g, \got i^*\otimes\got g/\got i)=Z^0(\got{i}\lhd\got{g})$, with 
$\mu^{\sigma_0(\phi)}\in C_{\got i}^2(\got g; \got g)$ for all $\phi\in I$. 
Using the splitting $\got g=\got i\oplus \got i^c$ of $\got g$, the bracket $\mu^{\sigma_0(\phi)}$ is explicitly given by 
\[ \mu^{\sigma_0(\phi)}(x,y)=\begin{pmatrix} 1& 0\\ \phi& 1\end{pmatrix}\mu\left(\begin{pmatrix} 1& 0\\ -\phi& 1\end{pmatrix}x, \begin{pmatrix} 1& 0\\ -\phi& 1\end{pmatrix}y\right)
\]
for all $x,y\in\got g=\got i\oplus \got i^c$. Since $\mu^{\sigma_0(\phi)}$ lies in $C^2_{\got i}(\got g;\got g)$, it has $\got i$ as an ideal. But this is equivalent to $\operatorname{graph}(-\phi)$ being an ideal of $\mu$.

Hence \cite[Theorem 3.20]{Singh25} implies that Theorem \ref{theorem stability ideals} holds with the weaker condition $H^1_ \wedge(\got g, \got{i}^*\otimes\got{g}/\got i)=0$.
\end{rem}

\begin{cor}
	If $(\got{g},\mu_{\got{g}})$ is a semisimple Lie algebra, then every Lie ideal $\got{i}\lhd\got{g}$ is a (topologically) stable ideal.
\end{cor}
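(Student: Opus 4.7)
The plan is to reduce this to Theorem \ref{theorem stability ideals} via Whitehead's first lemma. By Theorem \ref{theorem stability ideals}, it suffices to show that $H^1(\got{i}\lhd\got{g})=0$ for every Lie ideal $\got{i}\lhd\got{g}$. Recall that, by definition,
\[
H^1(\got{i}\lhd\got{g})=H^1_{\delta^{\Hom}_{\got{g}\rhd\got{i}}}\bigleftpar\got{g};\got{i}^*\otimes\got{g}/\got{i}\bigrightpar,
\]
i.e.~it is the Chevalley--Eilenberg cohomology of $\got{g}$ in degree $1$ with values in the finite-dimensional adjoint-type representation $\ad^{\Hom}$ on $\got{i}^*\otimes\got{g}/\got{i}$ that was introduced at the beginning of Section \ref{def_coh_ideal}.

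Now if $(\got{g},\mu_{\got{g}})$ is semisimple, Whitehead's first lemma (see e.g.~\cite{Jacobson-Nathan-book-on-Lie-algebras-1962}) states that $H^1(\got{g};V)=0$ for any finite-dimensional representation $V$ of $\got{g}$. Applying this to $V=\got{i}^*\otimes\got{g}/\got{i}$ gives immediately $H^1(\got{i}\lhd\got{g})=0$, so by Theorem \ref{theorem stability ideals} the ideal $\got{i}$ is (topologically) stable.

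There is essentially no obstacle here: the entire content lies in identifying $H^1(\got{i}\lhd\got{g})$ as an instance of $H^1_{\rm CE}(\got{g};V)$ for a finite-dimensional $\got{g}$-module $V$, which is direct from the definition of the deformation cohomology of an ideal. As a side remark, Theorem \ref{theorem stability ideals} additionally yields that, locally around $\got{i}$, the space $\mbf{I}_k(\got{g})$ is a smooth manifold of dimension $\dim Z^0(\got{i}\lhd\got{g})$, and since $H^1(\got{i}\lhd\got{g})=0$ forces every infinitesimal deformation to be unobstructed, the tangent space at $\got{i}$ to $\mbf{I}_k(\got{g})$ coincides with $Z^0(\got{i}\lhd\got{g})$.
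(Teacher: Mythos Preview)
Your proof is correct and follows essentially the same approach as the paper: both invoke Whitehead's first lemma to conclude $H^1(\got{i}\lhd\got{g})=0$ and then apply Theorem \ref{theorem stability ideals}. Your version is slightly more explicit in spelling out that $H^1(\got{i}\lhd\got{g})$ is by definition a Chevalley--Eilenberg $H^1$ with coefficients in a finite-dimensional module, but the argument is the same.
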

\begin{proof}
	By Whitehead's (first) lemma, see e.g.~\cite{Jacobson-Nathan-book-on-Lie-algebras-1962}, follows that $H^1(\got{i}\lhd\got{g})=0$ and so the claim follows by Theorem \ref{theorem stability ideals}.
\end{proof}

\appendix
\section{Useful background on Grassmannians}\label{appendix_G_k}
This appendix collects useful structural results on Grassmann manifolds, their tangent spaces and their tautological bundles.
\subsection{Tangent spaces to the Grassmannian} \label{tangent_grk}
Let $V$ be a finite-dimensional vector space and choose $k\in \{0,\ldots, \dim V\}$. The $k$-Grassmannian of $V$ is the space 
\[ \Gr_k(V)=\{ W\subseteq V\mid W \text{ vector subspace of dimension } k\}.
\]
For each $W\in \Gr_k(W)$ and each choice of linear complement $W^c\subseteq V$ of $W$ in $V$, the 
map
\[ \Psi_{W,W^c}\colon W^*\otimes W^c\to \{W'\in \Gr_k(V)\mid W'\oplus W^c=V\}=:U_{W,W^c}, \qquad \phi\mapsto \grap(\phi)
\]
is a bijection with inverse 
\[ \Psi_{W,W^c}^{-1}\colon U_{W,W^c}\to W^*\otimes W^c, \qquad W'\mapsto \phi=\pr_{W^c}\arrowvert_{W'}\circ(\pr_{W}\arrowvert_{W'})^{-1},
\]
where $\pr_{W^c}\colon V\to W^c$ and $\pr_{W}\colon V\to W$ are the linear projections defined by the splitting $V=W\oplus W^c$ of $V$.

The set $\Gr_k(V)$ has a unique topology and a unique smooth structure such that for each $W\in \Gr_k(V)$ and each choice of complement $W^c$ as above, the map $\Psi^{W, W^c}$ is a smooth chart of $\Gr_k(V)$ centered at $W$. The smooth manifold $\Gr_k(V)$ has consequently the dimension $k(n-k)$. Choose again $W\in \Gr_k(V)$ and a smooth curve $\gamma\colon I\to \Gr_k(V)$ with $I$ an interval containing $0$, and with $\gamma(0)=W$. Choose a complement $W^c$ for $W$ in $V$. Then, possibly after shrinking the interval $I$ around $0$, the curve $\gamma$ has image in $U_{W,W^c}$. It is hence identified via $\Psi_{W,W^c}$ with a smooth curve $\phi\colon I\to W^*\otimes W^c$, and its tangent vector at $t=0$ is hence 
\[ \dot{\phi}(0)\in W^*\otimes W^c.
\]
Since $W^c$ is canonically isomorphic to $V/W$, this shows that 
\[ T_W\Gr_k(V)\simeq W^*\otimes V/W
\]
via the choice of chart centered at $W$. The following shows that this does not depend on the choice of complement $W^c$ for $W$. Consider two linear complements $W_1$ and $W_2$ for $W$ in $W$. Then for each $w_1\in W_1$ there exist $w\in W$ and $w_2\in W_2$ such that $w_1=w+w_2$. Setting $A(w_1)=w$ and $B(w_1)=w_2$ defines two linear maps $A=\pr^{W,W_2}_{W}\circ\iota_{W_1}\colon W_1\to W$ and $B=\pr^{W,W_2}_{W_2}\circ\iota_{W_1}\colon W_1\to W_2$. Here, $\iota_{W_1}$ is the inclusion of $W_1$ in $V$, and $\pr^{W,W_2}_{W}$ and $\pr^{W,W_2}_{W_2}$ are the linear projections from $V$ on $W$ and $W_2$ defined by the splitting $V=W\oplus W_2$. The map $B$ is invertible with inverse $\pr^{W,W_1}_{W_1}\circ\iota_{W_2}\colon W_2\to W_1$. Take $\phi\in W^*\otimes W_1$. Then for all $w\in W$
\begin{equation*}\label{grassmann1}
 w+\phi(w)=w+A\phi(w)+B\phi(w)=\underset{\in W}{\underbrace{(\id_{W}+A\phi)(w)}}+\underset{\in W_2}{\underbrace{B\phi(w)}}.
\end{equation*}
If $\grap(\phi)\cap W_2=\{0\}$, then the map 
\[ \pr^{W, W_2}_W\arrowvert_{\grap\phi}\colon \grap(\phi)\to W, \qquad w+\phi(w)\to (\id_W+A\phi)(w)
\]
is invertible, and so the map 
\[ \id_W+A\phi=\pr^{W, W_2}_W\arrowvert_{\grap\phi}\circ I_\phi\colon W\to W
\]
is invertible as well, 
if $I_\phi\colon W\to \grap(\phi)$ is the isomorphism $w\mapsto w+\phi(w)$.

It is then easy to see that 
\[ \grap(\phi)=\grap\left(B\phi(\id_W+A\phi)^{-1}\right),
\]
with the linear map $B\phi(\id_W+A\phi)^{-1}\in W^*\otimes W_2$.

Now take a smooth curve $\gamma\colon I\to \Gr_k(V)$ with $\gamma(0)=W$. There exists $\epsilon>0$ such that 
$\gamma(-\epsilon,\epsilon)\subseteq \{W'\in \Gr_k(W)\mid W'\oplus W_1=V=W'\oplus W_2\}$. Without loss of generality $I=(-\epsilon, \epsilon)$.
Set $\phi_1\colon I\to W^*\otimes W_1$, 
\[ \phi_1=\Psi_{W,W_1}^{-1}\circ \gamma.
\] 
Then by the considerations above the image $\phi_2$ of $\gamma$ in the chart of $\Gr_k(V)$ defined by $W$ and $W_2$ is given by 
\[\phi_2(t)=\Psi_{W,W_2}^{-1}\circ \gamma(t) = B\cdot\phi_1(t)\cdot (I_W+A\phi_1(t))^{-1}
\]
for all $t\in I$, and 
\begin{equation*}
\begin{split}
\dot\phi_2(0)&= \left.\frac{d}{dt}\right\arrowvert_{t=0}\phi_2(t)=\left.\frac{d}{dt}\right\arrowvert_{t=0}B\cdot\phi_1(t)\cdot (I_W+A\phi_1(t))^{-1}\\
&=B\cdot \dot\phi_1(0)\cdot (I_W+A\phi_1(0))^{-1}+B\cdot \phi_1(0)\cdot \left.\frac{d}{dt}\right\arrowvert_{t=0} (I_W+A\phi_1(t))^{-1}\\
&=B\cdot \dot\phi_1(0)
\end{split}
\end{equation*}
since $\phi_1(0)=0$.

By definition of $B$, the canonical projection $\pi_{V/W}\colon V\to V/W$ does 
\[ \pi_{V/W}(\psi(w))=\pi_{V/W}(A\psi(w)+B\psi(w))=\pi_{V/W}(B\psi(w))
\]
for all $\psi\in W^*\otimes W_1$ and $w\in W$, i.e.
\[ \pi_{V/W}\circ \psi = \pi_{V/W}\circ B\circ \psi.
\]
In particular, 
\[ \pi_{V/W}\circ \dot \phi_2(0) = \pi_{V/W}\circ B\circ \dot \phi_1(0)=\pi_{V/W}\circ \dot \phi_1(0)
\]
as elements of $W^*\otimes V/W$.

\bigskip
Now consider more generally a curve $\alpha\colon I\to \operatorname{GL}(V)$ of isomorphisms of $V$ such that $\alpha(0)=\id_V$, and take $W\in \Gr_k(V)$.
Then $\gamma\colon I\to \Gr_k(V)$, $t\mapsto \alpha(t)(W)$ is a smooth curve starting at $W$.
Choose as before a complement $W^c$ to $W$ in $V$.
Then the curves 
\[ A:=\pr_{W}\circ\alpha\arrowvert_{W}\colon I\to W^*\otimes W \quad \text{ and } \quad B:= \pr_{W^c}\circ\alpha\arrowvert_{W}\colon I\to W^*\otimes W^c\]
are smooth and $A$ is invertible on an open neighborhood of $0$ in $I$, with $A(0)=\Id_{W}$ and $B(0)=0$. Without loss of generality $A$ is invertible on $I$.
Then for all $t\in I$ 
\[ \gamma(t)=\alpha_t(W)=\{A(t)(w)+B(t)(w)\mid w\in W\}=\{w+B(t)(A(t))^{-1}(w)\mid w\in W\}=\grap(B(t)(A(t))^{-1}).
\]
As an element of $T_W\Gr_k(W)\simeq W^*\otimes V/W$, the vector $\dot \gamma(0)$ is hence 
\begin{equation*}
\begin{split}
	\dot \gamma(0)&=\pi_{V/W}\circ \left.\frac{d}{dt}\right\arrowvert_{t=0}(B(t)(A(t))^{-1})=\pi_{V/W}\circ \left(\dot B(0)(A(0))^{-1}-B(0)\left.\frac{d}{dt}\right\arrowvert_{t=0}(A(t))^{-1}\right)\\
	&=\pi_{V/W}\circ \dot B(0)=
	\pi_{V/W}\circ \dot\alpha(0)\arrowvert_{W}.
	\end{split}
\end{equation*}

\subsection{The tautological bundle and several vector bundles constructed from it}\label{triv_E}
	The tautological bundle $\pi\colon \Taut_k(V)\to\Gr_k(V)$ over $\Gr_k(V)$ is defined fibrewise by
	\begin{equation*}
		\Taut_k(V)\arrowvert_{W}\eqdef\{W\}\times W\subset \Gr_{k}(V)\times V,
	\end{equation*}
	and it is a smooth vector subbundle of the product vector bundle $\Gr_{k}(V)\times V$. That is, the vector space structure on $E_W$ is given by: $(W,w)+(W,w')=(W,w+w')$, while the projection $\pi\colon \Taut_k(V)\to\Gr_{k}(V)$ is given by $\pi(W,w)=W$. Let $U:=U_{W,W^{c}}\simeq W^*\otimes W^c$ be a neighborhood around $W\in\Gr_{k}(V)$ defined as above by a choice of complement $W^c$ for $W$ in $V$.
Then
	\begin{equation*}
		\pi^{-1}(U)\to U\times W, \quad (\grap(\phi),v) \mto (\phi,\pr_W(v))
	\end{equation*}
	for all $\phi\in W^*\otimes W^c$ and all $v\in \grap(\phi)$,
	is a smooth trivialisation of $\Taut_k(V)$ around $W$, with smooth inverse
	\[U\times W\to \pi^{-1}(U),  \quad (\phi, w)\mapsto (\grap(\phi), w+\phi(w)).
	\]
	
The smooth annihilator \[\Taut_k^\circ(V):=\{(W, l)\in \Gr_k(V)\times V^*\mid l(w)=0 \text{ for all } w\in W\}\]
of $\Taut_k(V)$ as a subbundle of $\Gr_k(V)\times V$  is denoted by $\pi_{\Taut^\circ}\colon \Taut_ k^\circ(V)\to \Gr_k(V)$. It is a smooth subbundle of $\Gr_k(V)\times V^*$ because a smooth chart
 $U$ of $\Gr_k(V)$ as above trivialises $\Taut_ k^\circ(V)$ via the map 
\[\pi_{\Taut^\circ}^{-1}(U)\to U\times W^\circ, \quad (\grap(\phi),l) \mto (\phi,l\arrowvert_{W^c})
\]
with smooth inverse 
\[U\times W^\circ\to \pi_{\Taut^\circ}^{-1}(U), \quad (\phi,l) \mto (\grap(\phi),l-\phi^*l).
\]
Here, the canonical identifications $W^\circ =(W^c)^*$ and $(W^c)^\circ=W^*$ are used.

The dual vector bundle $\pi_{\Taut^*}\colon \Taut_k^*(V)\to \Gr_k(V)$ is then as usual canonically isomorphic to the quotient 
\[ \Taut_k^*(V)\simeq \frac{\Gr_k(V)\times V^*}{\Taut_k^\circ (V)}\to \Gr_k(V).
\]
Again, the smooth chart $U$ of $\Gr_k(V)$ around $W$ trivialises $\Taut_k^*(V)$ via 
\[ \pi_{\Taut^*}^{-1}(U)\simeq \frac{U\times V^*}{\pi_{\Taut^\circ}^{-1}(U)}\longrightarrow U\times W^*
\simeq U\times (W^c)^\circ,
\]
\[ (\grap(\phi),  l+\grap(\phi)^\circ) \mapsto (\phi, l\arrowvert_{W}+l\arrowvert_{W^c}\circ \phi)
\]
with the smooth inverse 
\[(\grap(\phi),  l)\mapsto (\grap(\phi), l+\grap(\phi)^\circ).
\]

Finally, the quotient vector bundle \[
\overline{\pi}\colon \frac{\Gr_k(V)\times V}{\Taut_k(V)}\to \Gr_k(V)
\]
is locally trivialised by the smooth map
\[ \overline\pi^{-1}(U)\to U\times W^c, \quad (\grap(\phi), x+\grap(\phi))\mapsto (\phi, \pr_{W^c}(x)-\phi(\pr_W(x)))
\]
with the smooth inverse 
\[  U\times W^c\to \overline\pi^{-1}(U), \quad (\phi, x)\mapsto (\grap(\phi), x+\grap(\phi)).
\]

The vector bundle 
\[E:=(\Gr_k(V)\times V^*)\otimes (\operatorname{Taut}_k(V))^*\otimes \frac{\Gr_k(V)\times V}{\operatorname{Taut}_k(V)} \longrightarrow \Gr_k(V)
\]
 comes with the smooth vector bundle projection 
\[\begin{tikzcd}
	{\Gr_k(V)\times (V^*\otimes V^*\otimes V)} &&& E \\
	&& {\Gr_k(V)}
	\arrow["P", from=1-1, to=1-4]
	\arrow["{\pr_1}"', from=1-1, to=2-3]
	\arrow["\pi_E", from=1-4, to=2-3]
\end{tikzcd}\]
sending $(W,\phi)$ to 
\[ (W, \pi_{V/W}\circ \phi\arrowvert_{V\otimes W}).
\]
$E$ is trivialised over the open subset $U=U_{W,W^c}$ of $\Gr_k(V)$ by 
\[ \Phi\colon U\times (V^*\otimes W^*\otimes V/W) \to E\arrowvert_{U},
\]
\[(\grap(\phi), \theta\otimes (\eta\arrowvert_W+\eta\arrowvert_{W^c}\circ \phi)\otimes (\pr_{W^c}(x)-\phi(\pr_W(x))))\,\mapsto \, 
(\phi, \theta\otimes (\eta+\grap(\phi)^\circ)\otimes (x+\grap(\phi))).
\]
Conversely, the inverse of this smooth vector bundle isomorphism sends $(\grap(\phi),\omega)$ with $\omega\in V^*\otimes \grap(\phi)^*\otimes V/\grap(\phi)$ to $(\phi,\tilde\omega)$ with $\tilde\omega\in V^*\otimes W^*\otimes V/W$ defined by 
\begin{equation}\label{triv_E_2}
 \tilde \omega(v,w)=\pr_{W^c}(\omega(v, w+\phi(w)))-\phi(\pr_W(\omega(v, w+\phi(w))))
\end{equation}
for all $v\in V$ and $w\in W$.
Note that the isomorphism $\Phi\colon U\times (V^*\otimes W^*\otimes V/W) \to E\arrowvert_{U}$ is the identity on $E\arrowvert_{W}=\{W\}\times (V^*\otimes W^*\otimes V/W)$. 

\medskip

A choice of inner product $\langle\cdot\,,\cdot\rangle$ on $V$ defines a map
 $\operatorname{orth}\colon \Gr_k(V)\to \Gr_{n-k}(V)$ (where $n=\dim V$) sending a $k$-dimensional subspace of $V$ to its orthogonal complement.
 For $\phi\in W^*\otimes\operatorname{orth}(W)$, the adjoint map $\phi^t\in (\operatorname{orth}(W))^*\otimes W$ with respect to $\langle\cdot\,,\cdot\rangle$ is defined as usual by 
 \[ \langle \phi(w), u\rangle=\langle w,\phi^t(u)\rangle
 \]
 for all $w\in W$ and $u\in \operatorname{orth}(W)$. It is then easy to see that in the coordinates on $\Gr_k(V)$ and  $\Gr_{n-k}(V)$ defined by the splitting $W\oplus\operatorname{orth}(W)$, the map $\operatorname{orth}$ sends $\phi\in W^*\otimes\operatorname{orth}(W)$ to $-\phi^t\in (\operatorname{orth}(W))^*\otimes W$. Hence, it is a smooth map.
  
It defines the vector bundle morphism
 \[ s\colon \frac{\Gr_k(V)\times V}{\Taut_k(V)}\to \Gr_k(V)\times V, \qquad (W, x+W)\mapsto (W, \underset{=:s_W(x+W)}{\underbrace{\pr_{\operatorname{orth}(W)}(x)}}),
 \]
where for each $W\in \Gr_k(V)$ the map $\pr_{\operatorname{orth}(W)}\colon V\to V$ is the projection of $V$ on $\operatorname{orth}(W)$ defined by the splitting $V=W\oplus \operatorname{orth}(W)$ of $V$.
 The map $s$ is smooth since 
\[\begin{tikzcd}
	{\Gr_k(V)\times V} && {\Gr_k(V)\times V} & {(W,v)} && {(W,\pr_{\operatorname{orth}(W)}(v))} \\
	& {\frac{\Gr_k(V)\times V}{\Taut_k(V)}} &&& {(W,v+W)}
	\arrow["{\tilde s}", from=1-1, to=1-3]
	\arrow[from=1-1, to=2-2]
	\arrow["{\tilde s}", maps to, from=1-4, to=1-6]
	\arrow[from=1-4, to=2-5]
	\arrow["s"{description}, from=2-2, to=1-3]
	\arrow["s"{description}, maps to, from=2-5, to=1-6]
\end{tikzcd}\]
 commutes and the top map $\tilde s$ is clearly a smooth vector bundle morphism, while the left projection is a fibration of smooth vector bundles (hence a smooth surjective submersion).
 The vector bundle morphism $s$ splits the short exact sequence of vector bundles 
 \[0\longrightarrow \Taut_k(V)\longrightarrow \Gr_k(V)\times V \longrightarrow \frac{\Gr_k(V)\times V}{\Taut_k(V)}\longrightarrow 0
 \]
 over the identity on $\Gr_k(V)$.  Similarly, 
 the smooth vector bundle morphism 
 \[ \pr\colon \Gr_k(V)\times V \longrightarrow\Taut_k(V), \qquad (W, x)\to (W, \pr_W(x))
 \]
 is defined by the projections
$\pr_{W}\colon V\to W$ of $V$ on $W$ defined by the same splittings $V=W\oplus \operatorname{orth}(W)$ of $V$.
The vector bundle morphisms $s$ and $\pr$ then define together the smooth splitting 
\[ S\colon E\to \Gr_k(V)\times (V^*\otimes V^*\otimes V)
\] 
of $P\colon \Gr_k(V)\times (V^*\otimes V^*\otimes V)\to E$ 
by 
\[ S\colon (W, \eta)\mapsto (W, \underset{=:S_W(\eta)\in V^*\otimes V^*\otimes V}{\underbrace{s_W\circ \eta\circ (\id_{V}\otimes \pr_W)}}).
\]

%
%

\end{document}